\let\mathcal\mathscr
\numberwithin{equation}{section}
\DeclareMathOperator*{\Osum}{\sum{}^*}
\newtheorem{theorem}{Theorem}[section] 
\newtheorem{lemma}[theorem]{Lemma}
\newtheorem{corollary}[theorem]{Corollary}
\newtheorem{conjecture}[theorem]{Conjecture}
\theoremstyle{definition}
\theoremstyle{terminology}
\newtheorem{remark}[theorem]{Remark}
\newtheorem{definition}[theorem]{Definition}
\newtheorem{terminology}[theorem]{Terminology}
\let\ccc\c
\renewcommand{\phi}{\varphi}
\newcommand{\FF}{\mathbb{F}}
\renewcommand{\leq}{\leqslant}
\renewcommand{\geq}{\geqslant}
\renewcommand{\bar}{\overline}
\renewcommand{\c}{\mathbf{c}}
\renewcommand{\b}{\mathbf{b}}
\renewcommand{\r}{\mathbf{r}}
\DeclareMathOperator{\Gal}{Gal}
\DeclareMathOperator{\moo}{mod} 
\renewcommand{\bmod}[1]{\,(\moo{#1})}
\DeclareSymbolFont{bbold}{U}{bbold}{m}{n}
\DeclareSymbolFontAlphabet{\mathbbold}{bbold}
\newcommand{\md}[1]{ \left(\textnormal{mod}\ #1\right)}
\newcommand{\Q}{\mathbb{Q}}
\newcommand{\F}{\mathbb{F}}
\newcommand{\N}{\mathbb{N}}
\newcommand{\Z}{\mathbb{Z}}
\renewcommand{\l}{\left}
\renewcommand{\r}{\right}
\renewcommand{\b}{\mathbf}
\renewcommand{\c}{\mathcal}
\renewcommand{\epsilon}{\varepsilon}
\renewcommand{\leq}{\leqslant}
\renewcommand{\geq}{\geqslant}
\renewcommand{\#}{\sharp}
\title{Elliptic fibrations and $3\cdot 2^k$}
\author{P. Koymans}
\address{Mathematisch Instituut \\
Universiteit Utrecht \\
Postbus 80.010, 3508 TA Utrecht \\
Netherlands}
\email{p.h.koymans@uu.nl}
\author{C. Pagano} 
\address{Department of Mathematics \\ 
Concordia University \\ 
Montreal H3G 1M8 \\ 
Canada} 
\email{carlo.pagano@concordia.ca}
\author{E. Sofos} 
\address{Department of Mathematics\\
University of Glasgow \\ 
G12~8QQ United Kingdom}
\email{efthymios.sofos@glasgow.ac.uk}
\subjclass[2020]
{
11N45; 
11G05, 
11R29, 
11N36. 
} 
\date{}
\begin{document} 
\begin{abstract} 
We determine the order of magnitude for all exponential moments of the rank in a broad class of elliptic fibrations and for the $3\cdot 2^k$-torsion in the class group of quadratic fields.
\end{abstract}

\maketitle

\setcounter{tocdepth}{1}
\tableofcontents 

\section{Introduction}
Let $P \in \Z[t_1, \dots, t_n]$ be non-zero and let $r_1, r_2, r_3$ be fixed distinct integers. Consider the elliptic fibration $f: \mathcal{E} \rightarrow \mathbb{A}^n$ given by
$$ 
\mathcal{E}: \quad P(t_1, \dots, t_n) y^2 = (x - r_1) (x - r_2) (x - r_3).
$$ 
We let $E(\mathbf{t})$ be the elliptic curve given by substituting $t_1, \dots, t_n$ and denote its rank by $\mathrm{rk}(E(\mathbf{t}))$.

\begin{theorem}
\label{tFibration}
Let $n \geq 1$, $P \in \Z[t_1, \dots, t_n]$ and $r_i \in \Z$ be as above and let $\kappa > 1$ be arbitrary. Then there exist $c, C > 0$ such that for all sufficiently large $B$ we have 
\[ 
cB^n \leq \sum_{\substack{\mathbf{t} \in \Z^n, \ P(\mathbf{t}) \neq 0 \\ \max_i |t_i| \leq B }} \kappa^{\mathrm{rk}(E(\mathbf{t}))} \leq CB^n.
\] 
\end{theorem}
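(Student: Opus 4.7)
The lower bound is immediate: since $\kappa > 1$ each summand satisfies $\kappa^{\mathrm{rk}(E(\mathbf{t}))} \geq 1$, and the number of lattice points $\mathbf{t} \in [-B,B]^n \cap \Z^n$ with $P(\mathbf{t}) \neq 0$ is $(2B+1)^n - O(B^{n-1})$, which exceeds $cB^n$ for large $B$. The substantive content is the upper bound. The change of variables $X = P(\mathbf{t}) x$, $Y = P(\mathbf{t})^2 y$ exhibits $E(\mathbf{t})$ as the quadratic twist by $P(\mathbf{t})$ of the fixed curve $y^2 = (x-r_1)(x-r_2)(x-r_3)$, so $E(\mathbf{t})$ has full rational $2$-torsion and
\[
\mathrm{rk}(E(\mathbf{t})) \leq \dim_{\F_2} \mathrm{Sel}_2(E(\mathbf{t})) - 2.
\]
Choosing any integer $k \geq \log_2 \kappa$ gives $\kappa^{\mathrm{rk}(E(\mathbf{t}))} \leq |\mathrm{Sel}_2(E(\mathbf{t}))|^k$, so it is enough to establish, for every fixed positive integer $k$,
\[
\sum_{\substack{\mathbf{t} \in [-B,B]^n \cap \Z^n \\ P(\mathbf{t}) \neq 0}} |\mathrm{Sel}_2(E(\mathbf{t}))|^k \ll_k B^n.
\]

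I would then expand the $k$-th power via classical $2$-descent. The group $\mathrm{Sel}_2(E(\mathbf{t}))$ is identified with the set of triples of squarefree classes $(d_1, d_2, d_3) \in (\Q^{\times}/\Q^{\times 2})^3$ with $d_1 d_2 d_3$ a square and each $d_i$ supported on primes dividing $P(\mathbf{t}) \cdot \prod_{i<j}(r_i - r_j)$, such that the auxiliary conics $d_1 z_1^2 - d_2 z_2^2 = r_1 - r_2$ and $d_1 z_1^2 - d_3 z_3^2 = r_1 - r_3$ are everywhere locally soluble. Expanding $|\mathrm{Sel}_2(E(\mathbf{t}))|^k$ as a sum over $k$-tuples of admissible triples and swapping the order of summation, I would count configurations $\bigl(\mathbf{t}, (d^{(\ell)})_{\ell=1}^k\bigr)$ meeting all local conditions simultaneously. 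The conditions at primes dividing the fixed integer $\prod_{i<j}(r_i-r_j)$ are pure congruences on $\mathbf{t}$ and are detected by Fourier analysis on $(\Z/N)^n$; the genuinely delicate conditions are the Jacobi-symbol constraints $\bigl(\tfrac{d_i^{(\ell)}}{p}\bigr) = 1$ at primes $p \mid P(\mathbf{t})$ not dividing the Selmer data.

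The core obstacle is to bound, uniformly in $(d^{(\ell)})_{\ell=1}^k$, a character sum of the shape $\sum_{\mathbf{t}} \chi_{(d^{(\ell)})}(P(\mathbf{t}))$, where $\chi_{(d^{(\ell)})}$ is the real character assembled by quadratic reciprocity from the collection of Jacobi symbols $\bigl(\tfrac{d_i^{(\ell)}}{\cdot}\bigr)$ and evaluated at the generic squarefree part of $P(\mathbf{t})$. Whenever the assembled character is principal one obtains the expected contribution of size $B^n$; whenever it is non-principal one needs a saving sufficient to absorb the summation over non-trivial Selmer data. This is the hard step: it requires a multi-dimensional quadratic large sieve for Jacobi symbols of values of a polynomial in $n$ variables, uniform in auxiliary parameters $d_i^{(\ell)}$ that can grow like a power of $B$, together with a Rankin-style tail bound (or an iterative dyadic decomposition) to control the $\mathbf{t}$ for which $P(\mathbf{t})$ has an atypically large powerful part. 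The Cassels pairing on $\mathrm{Sel}_2$ is what guarantees that off-diagonal Selmer tuples produce genuinely non-principal characters, so that after summation only the diagonal main term of size $B^n$ survives. Extracting the required cancellation uniformly in the Selmer parameters, and managing the interaction between the archimedean window and the $p$-adic local conditions for $p \mid P(\mathbf{t})$, is the principal difficulty of the proof.
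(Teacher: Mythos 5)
Your reductions up to the point of needing to bound $\sum_{\mathbf{t}} |\mathrm{Sel}_2(E(\mathbf{t}))|^k \ll_k B^n$ match the paper exactly: the lower bound is trivial, twist-invariance and the $2$-torsion give $\kappa^{\mathrm{rk}} \leq |\mathrm{Sel}_2|^k$ for suitable $k$, and one expands the Selmer group via $2$-descent. But from there your plan runs straight into the wall that this paper was written to climb around, and you do not actually resolve it --- you name it (``This is the hard step'') and then describe it as ``the principal difficulty'' without giving an argument.

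Concretely, your plan calls for cancellation in character sums of the shape $\sum_{\mathbf{t}} \chi_{(d^{(\ell)})}(P(\mathbf{t}))$, where the character is assembled from Jacobi symbols $(d_i^{(\ell)}/p)$ with $p$ ranging over the prime divisors of $P(\mathbf{t})$, uniformly over Selmer data $d_i^{(\ell)}$ as large as a power of $B$. This is precisely the sum the paper displays in \eqref{eNaiveMethod} and explicitly declares to be out of reach: ``the current state of the art cannot handle equidistribution for mutual quadratic symbols between the divisors of a thin integer sequence such as the values of a polynomial $F$.'' The known large-sieve inequalities for Jacobi symbols (Heath-Brown, and the hyperbolic variants used in Corollary \ref{cLargeSieve}) give savings in terms of bilinear ranges of \emph{free} variables; they do not give a saving when both arguments of the Jacobi symbol are forced to be cofactors of the same value $P(\mathbf{t})$. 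There is no ``multi-dimensional quadratic large sieve for Jacobi symbols of values of a polynomial'' in the literature, and the required uniformity in the Selmer parameters $d_i^{(\ell)}$ is exactly what fails. The ``Rankin-style tail bound'' and ``iterative dyadic decomposition'' you gesture at address the powerful part of $P(\mathbf{t})$, but not the core bilinear-in-divisors-of-a-thin-set issue.

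What the paper does instead is avoid ever having to prove such equidistribution. The key new object is the R\'edei majorant $g_{\mathbf{r}}(m,n)$ of \S\ref{ssSelmer}: it bounds $|\mathrm{Sel}_2|$ from above by the kernel size of a matrix of Legendre symbols among the primes of $m$ only, twisted on the diagonal by $n\bmod m$, and it is periodic in $n$ modulo $m$. One then factors $P(\mathbf{t}) = c \cdot b$ into a ``small'' part $c \leq Z = M(X)^{\eta_1}$ and a ``large'' cofactor $b$, and runs the Nair--Tenenbaum sieve (Theorem \ref{tNT}): the contribution of the large primes in $b$ is controlled by the fundamental lemma and the R\'edei-majorant inequality $f(cb) \leq g(c,b) A^{\Omega(b)}$, and what remains is a complete average over $c \leq Z$ of the form $\sum_{d} \sum_{\mathcal{A}} g(d,\mathcal{A}) h(d,\mathcal{A})$, where $d$ now ranges over unconstrained integers weighted by the local density $h$ of the polynomial $P$. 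Only at that point do the Fouvry--Kl\"uners/Heath-Brown character techniques enter (Theorem \ref{tCharacter}), and they apply because the variables $D_{\mathbf{u}}$ there are free, not tied to values of $P$. In short, the sieve converts the thin-sequence problem into a dense one before any bilinear cancellation is invoked. Without some substitute for that conversion, the character sum you isolate cannot be bounded, and the upper bound in the theorem does not follow from your outline.
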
 

For an integer $n \geq 1$, let $h_n(d) := \# \mathrm{Cl}(\Q(\sqrt{d}))[n]$.\begin{theorem}
\label{t12}Fix $k \in \Z_{\geq 1}$ and let $n = 3 \cdot 2^k$. There exist $c', C' > 0$ such that for $X \geq 3$ we have 
\[
c' X \log X \leq \sum_{|d| \leq X} h_n(d) \leq C' X \log X,
\]
where the sum is over integer fundamental discriminants of quadratic fields.
\end{theorem}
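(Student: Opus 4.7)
Using $\gcd(3, 2^k) = 1$, the Chinese Remainder Theorem factors $\mathrm{Cl}(\Q(\sqrt{d}))[n]$ as a direct sum of its $3$-part and its $2^k$-part, and therefore
\[
h_n(d) = h_3(d)\, h_{2^k}(d).
\]
The lower bound is supplied by genus theory alone: since $2 \mid n$, one has $\mathrm{Cl}(\Q(\sqrt{d}))[2] \subseteq \mathrm{Cl}(\Q(\sqrt{d}))[n]$, so $h_n(d) \geq h_2(d) \geq 2^{\omega(|d|) - 1}$ for each fundamental discriminant $d$. The standard estimate
\[
\sum_{\substack{|d| \leq X \\ d \text{ fundamental}}} 2^{\omega(|d|)} \gg X \log X
\]
then produces $\sum_{|d| \leq X} h_n(d) \gg X \log X$.

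For the upper bound I would relate the two factors to the rank in a suitable elliptic fibration and appeal to Theorem~\ref{tFibration}. Choose distinct integers $r_1, r_2, r_3$ and consider $E^{(d)} \colon d y^2 = (x - r_1)(x - r_2)(x - r_3)$; every $E^{(d)}$ has full rational $2$-torsion, and the family is of the type handled by Theorem~\ref{tFibration} when $P(t)=t$. Iterated $2$-descent through the three rational $2$-isogenies relates $h_{2^j}(d)$ for $1 \leq j \leq k$ to the $2^j$-Selmer group of $E^{(d)}$, and combined with $2^{\mathrm{rk}(E^{(d)})} \leq |\mathrm{Sel}_2(E^{(d)})|$ together with careful control of the local-at-bad-primes factors one would aim for a pointwise bound of the shape
\[
h_{2^k}(d) \leq 2^{\omega(|d|)} \kappa_0^{\mathrm{rk}(E^{(d)})}
\]
for an absolute constant $\kappa_0 = \kappa_0(k) > 1$. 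Davenport--Heilbronn gives $\sum_{|d| \leq X} h_3(d) \ll X$, and since the 3-torsion of the class group is governed by cubic-ring data essentially uncorrelated with the 2-primary data, a hybrid-moment estimate would upgrade this to
\[
\sum_{|d| \leq X} h_3(d)\, 2^{\omega(|d|)} \kappa_0^{\mathrm{rk}(E^{(d)})} \ll X \log X,
\]
which is the required upper bound: a single Cauchy--Schwarz step decouples the $2^{\omega(|d|)}$ weight from $\kappa_0^{\mathrm{rk}}$, and Theorem~\ref{tFibration} applied with $\kappa = \kappa_0^{2}$ then absorbs the $\kappa_0^{\mathrm{rk}}$ factor.

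The principal technical obstacle is the iterated $2$-descent: for $k \geq 2$ one must track higher R\'edei-type reciprocity symbols uniformly in $d$, which is precisely the content of the deeper structural material preceding the theorem. A secondary difficulty is securing the near-independence of the 3-primary and 2-primary parts of the class group sharply enough that fusing the Davenport--Heilbronn bound on $h_3$ with the elliptic-fibration bound on $h_{2^k}$ preserves the order of magnitude $X \log X$ and does not cost an extra factor of $\log X$.
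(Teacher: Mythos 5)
Your lower bound matches the paper's and is fine. But the proposed upper bound route has a genuine gap, and in fact inverts the paper's logic.

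First, the pointwise inequality you aim for, $h_{2^k}(d) \leq 2^{\omega(|d|)}\,\kappa_0^{\mathrm{rk}(E^{(d)})}$, cannot hold. The rank of a quadratic twist can be much smaller than its $2$-Selmer rank, and the $2$-Selmer rank is not simply the $4$-rank of $\mathrm{Cl}(\Q(\sqrt d))$ — they are analogous but controlled by different local conditions. So $\kappa_0^{\mathrm{rk}(E^{(d)})}$ does not majorize $h_{2^k}(d)$ in general; the correct direction of majorization (which the paper uses for Theorem~\ref{tFibration}) is $\kappa^{\mathrm{rk}} \leq |\mathrm{Sel}^2|^k$, not the reverse. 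Second, the ``single Cauchy--Schwarz step'' to decouple $h_3$ from the $2$-primary weight would produce the term $\bigl(\sum_{|d|\leq X} h_3(d)^2\,4^{\omega(|d|)}\bigr)^{1/2}$, which is not known to be $\ll X\log X$ (second moments of $h_3$ are not under the required control), so this costs logarithms rather than preserving the order of magnitude. Third, and structurally, Theorem~\ref{t12} is not deduced from Theorem~\ref{tFibration} in the paper: both are independent applications of the shared machinery (R\'edei majorants in \S\ref{sRedei}, the Nair--Tenenbaum-type sieve Theorem~\ref{tNT}, and the weighted character moment Theorem~\ref{tCharacter}). Trying to route the class-group statement through the elliptic-curve statement is circular in spirit, because the twist family with $P(t)=t$ is exactly the linear case whose treatment uses the same class-group/Selmer character-sum technology.

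The ingredient you are missing is the paper's elementary filtration inequality $h_{2^{t+1}}(d)/h_{2^t}(d) \leq h_{2^t}(d)/h_{2^{t-1}}(d)$, giving
\[
h_{2^k}(d) \leq h_2(d)\bigl(h_4(d)/h_2(d)\bigr)^{k-1} = h_2(d)\,2^{(k-1)\mathrm{rk}_4\mathrm{Cl}(\Q(\sqrt d))},
\]
and hence $h_{3\cdot 2^k}(d) \leq h_3(d)\,2^{\omega(d)}\,2^{k\cdot\mathrm{rk}_4\mathrm{Cl}(\Q(\sqrt d))}$. One then writes $h_3 = 1 + (h_3-1)$: the $1$-piece is handled by Fouvry--Kl\"uners' weighted $4$-rank result, while the $(h_3-1)$-piece is treated by sieving with weights $w_d = (h_3(d)-1)$ and the level-of-distribution input from Lemma~\ref{lh3Level}, applied to the R\'edei-majorized function $f(d) = 2^{\omega(d)}2^{k\cdot\mathrm{rk}_4\mathrm{Cl}}$ via Theorem~\ref{tNT}, with the resulting moment bounded by Theorem~\ref{tCharacter}. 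The ``hybrid-moment estimate'' you gesture at is exactly this: it is achieved by the sieve majorant construction, not by Cauchy--Schwarz.
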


\subsection{New ingredients} 
We summarize the new ideas in the case of class groups. Gauss proved that $2h_2(d)$ is essentially a multiplicative function, however, it is well-known that $h_4(d)$ has no obvious multiplicative structure. To estimate the average of $h_{12}(d) = h_3(d) h_4(d)$, the standard approach in the literature~\cite{FK4, HBCongruent} leads to a character sum of the shape
\begin{align}
\label{eNaiveMethod}
\sum_{\b t \in \Z^4\cap \mathcal{D}(B)} \sum_{\substack{\b d \in \Z_{\geq 1}^4 \\ d_0 d_1 d_2 d_3 = F(\b t)}} \mu(d_0 d_1 d_2 d_3)^2 \left(\frac{d_0}{d_1}\right) \left(\frac{d_2}{d_3}\right),
\end{align}
where $(\frac{\cdot}{\cdot})$ is the Jacobi quadratic symbol, $F$ is the discriminant polynomial of the cubic form $t_0 X^3 + t_1 X^2Y + t_2 X Y^2 + t_3 Y^3$ and $\mathcal{D}(B)$ is a fundamental domain for the action of $\mathrm{GL}_2(\Z)$ on binary cubic forms of discriminant bounded by $B$. Unfortunately, the current state of the art cannot handle equidistribution for mutual quadratic symbols between the divisors of a thin integer sequence such as the values of a polynomial $F$.

To deal with this, we majorize $h_4(mn)$ by a function $g(m, n)$ given by the size of the kernel of 
\begin{align}
\label{eRedeiMethod}
\begin{pmatrix}
\ast & (\frac{p_2}{p_1}) & \dots & (\frac{p_r}{p_1}) \\
(\frac{p_1}{p_2}) & \ast & \dots & (\frac{p_r}{p_2}) \\
\vdots & \vdots & \ddots & \vdots \\
(\frac{p_1}{p_r}) &(\frac{p_2}{p_r}) & \dots & \ast
\end{pmatrix},
\end{align}
where $p_i$ are the odd prime divisors of $m$ and the starred entries are a diagonal twist depending on $n$ modulo $m$, see Definition~\ref{def:firstdefn}. The function $g(m, n)$ is periodic in $n$ modulo $m$ and has a weak multiplicative property only after averaging congruence classes. This allows us to introduce sieving ideas of Nair--Tenenbaum \cite{NT} into this problem, see Definition~\ref{dClasses} for the technical set-up. 

The point where sieving and algebra meet can be explained informally as follows: for the minor of the matrix \eqref{eRedeiMethod} consisting of primes $p_1, \ldots, p_k$ with $\prod_{i = 1}^k p_i \leq X^{\epsilon}$ for a small fixed $\epsilon$, we show equidistribution. The contribution of the large primes is controlled via the Nair--Tenenbaum sieve procedure. Once the minor is known to be almost invertible, linear algebra gives a lower bound for the rank of the matrix and thus an upper bound for the size of its kernel.

We now describe how to prove equidistribution of the minor in the simplest case of $h_{12}$. The sieving procedure converts averages over thin sequences into complete averages of the form 
\begin{equation}
\label{eqh412}
\sum_{\substack{\b d \in \Z_{\geq 1}^4 \\ d_0 d_1 d_2 d_3 \leq X}} \mu(d_0 d_1 d_2 d_3)^2 h(d_0 d_1 d_2d_3) \left(\frac{d_0}{d_1}\right) \left(\frac{d_2}{d_3}\right),
\end{equation}
where $h$ is a general non-negative multiplicative function (it is worth comparing the above with \eqref{eNaiveMethod}). In the case $h=1$ and $h=\kappa^{\omega(d)}$ these sums have previously been treated by Fouvry--Kl\"uners respectively in~\cite{FK4} and~\cite{FKWeighted}. We handle the sum \eqref{eqh412} by generalizing their work. Simplifications to their method are introduced, stemming from analytic tools recently appearing in the literature such as the \texttt{LSD} method of Granville--Koukoulopoulos~\cite{GK} and large sieve results for hyperbolic regions \cite{Wil} together with the fact that we only need an upper bound.

Finally, for higher ranks $3 \cdot 2^k$ we use the inequality $h_{2^k} \leq h_2 (h_4 /h_2)^{k - 1}$ to bound the sum over $d$ in Theorem~\ref{t12} by a higher moment of $h_4$ and then we apply our majorizing idea as described above. Well-known analogies between the $2$-Selmer group and $h_4$ allow us to exploit all the ideas above in the context of Theorem~\ref{tFibration} with the caveat that the character sums analogous to~\eqref{eqh412} (first appearing in Heath-Brown \cite{HBCongruent}) are somewhat more involved.

\subsection{Previous results on ranks} 
If we knew Park--Poonen--Voight--Wood's conjecture~\cite{1469.11173} that ranks of elliptic curves over $\Q$ are uniformly bounded, then Theorem~\ref{tFibration} would follow immediately. Our result proves the conjecture `on average' for many thin families of elliptic curves. It was previously only known for linear polynomials by the work of Heath-Brown \cite{HBCongruent}, Kane \cite{Kane} and Smith \cite{Smith2}. When $P$ is an integer polynomial in one variable, Silverman \cite{Silverman1} proved $\mathrm{rank} \ E(t) \geq \mathrm{rank} \ E(\mathbb{A}^1)$ for all but finitely many $t$, where $E(\mathbb{A}^1)$ is the elliptic curve over the function field $\Q(t)$, see also N\'eron \cite{Neron} for a more general but slightly weaker result. Based on these investigations, he made the following conjecture, which constitutes a natural analogue of Goldfeld's well-known conjecture for quadratic twists.

\begin{conjecture}[Silverman, \cite{Silverman2}]For almost all $t \in \Q$ ordered by height we have 
\[ 
\mathrm{rank} \, E(\mathbb{A}^1) \leq \mathrm{rank} \, E(t) \leq 1 + \mathrm{rank} \, E(\mathbb{A}^1).
\] 
\end{conjecture}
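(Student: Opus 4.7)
The plan is to split Silverman's conjecture into its two halves. The lower bound $\mathrm{rank}\, E(\mathbb{A}^1) \leq \mathrm{rank}\, E(t)$ is already known for all but finitely many $t \in \Q$ by Silverman's specialization theorem \cite{Silverman1}, so the genuine content is the upper bound: for a density-one set of $t \in \Q$ ordered by height one must show $\mathrm{rank}\, E(t) \leq r_0+1$, where $r_0 := \mathrm{rank}\, E(\mathbb{A}^1)$.

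My starting point would be Theorem~\ref{tFibration} applied with $n=2$ to the homogenisation of the one-parameter family, so that $t = t_1/t_2$ ranges over rationals of bounded height. Fixing arbitrary $\kappa > 1$ and applying Markov's inequality yields
\[
\#\bigl\{ t : \mathrm{rk}(E(t)) \geq R\bigr\} \ll_{\kappa} \kappa^{-R}\, B^2,
\]
which already confines $\mathrm{rank}\, E(t)$ to $O(\log B)$ for $100\%$ of $t$ of height at most $B$. This is a new unconditional contribution toward Silverman's conjecture for nonlinear one-parameter families, but it falls well short of the conjectured bound: for any fixed $R > r_0 + 1$, the right-hand side remains a positive proportion of $B^2$. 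To close the final gap I would pass from $2$-Selmer (the engine of Theorem~\ref{tFibration}) to $2^k$-Selmer for every $k$, splitting $\mathrm{Sel}_{2^k}(E(t))$ into the part inherited from the generic fibre and the part genuinely produced by specialisation, and then argue in the spirit of Smith \cite{Smith2} that the latter vanishes for almost all $t$.

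The main obstacle, and in my view the reason the conjecture remains open in this generality, lies in establishing equidistribution of the higher R\'edei symbols across the values of $P(t)$. Smith's quadratic-twist method exploits a rich bilinear structure among the prime divisors of $t$ itself; this collapses as soon as $\deg P \geq 2$, because the prime divisors of $P(t)$ form a thin sequence. The paper's own workaround, namely majorising Selmer group sizes by the size of the kernel of the R\'edei-type matrix \eqref{eRedeiMethod} and then invoking Nair--Tenenbaum sieving to reduce to a complete sum like \eqref{eqh412}, is flexible enough to produce all exponential moments but does not obviously propagate to higher $2^k$-descents, where one would need a Cassels--Tate style equidistribution input on the thin sequence of discriminants $P(t)$. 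I would attempt this by combining the sieving-meets-algebra philosophy of the present paper with an inductive $8$-descent analogue of \eqref{eqh412}. The step I expect to be hardest is controlling the bilinear character sums that appear at each new descent level, since the present state of the art (even for linear $P$) is built around the single-level sum \eqref{eqh412} and no satisfactory iterated theory yet exists for thin sequences.
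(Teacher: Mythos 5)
This statement is labeled as a \emph{conjecture} in the paper, attributed to Silverman, and the paper offers no proof of it: the authors explicitly remark that ``the upper bound in Silverman's conjecture seems out of reach of current techniques.'' Your write-up correctly recognizes this --- it is not a proof but a sketch of a programme, and you candidly identify the point where the programme breaks down (propagating the R\'edei/Nair--Tenenbaum machinery past the first descent level on a thin sequence of discriminants). That self-assessment is consistent with the paper's own position.

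Two smaller technical remarks on your partial observations. First, Theorem~\ref{tFibration} as stated applies only to fibrations of the specific shape $P(\mathbf{t})\,y^2=(x-r_1)(x-r_2)(x-r_3)$ with full rational $2$-torsion, whereas Silverman's conjecture concerns an arbitrary elliptic curve $E$ over $\Q(t)$; homogenizing to $n=2$ does not by itself reduce the general conjecture to the setting of the theorem, so even the Markov-type consequence you extract is restricted to this special class. Second, the Markov argument is weaker than stated: from the moment bound $\sum_{t}\kappa^{\mathrm{rk}(E(t))}\ll B^2$ one gets that the density of $t$ with $\mathrm{rk}(E(t))\geq R$ is $\ll\kappa^{-R}$, which for each \emph{fixed} $R$ is a positive proportion. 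This shows the rank is essentially bounded in an averaged sense, but it does not pin the rank to any fixed threshold on a density-one set, and in particular does not even yield $\mathrm{rk}(E(t))=O(1)$ for $100\%$ of $t$ --- only that the exceptional density decays as $R\to\infty$. This is exactly the gap the conjecture asks one to close, and neither the paper nor your proposal does so.
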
 

Silverman calls this conjecture \textit{`reasonable yet a difficult question'}. For certain special fibrations, the lower bound and upper bound were achieved for infinitely many fibres by Colliot-Th\'el\`ene--Skorobogatov--Swinnerton-Dyer \cite{CTSSD} conditionally on Shinzel's hypothesis and finiteness of Sha. There are also upper bounds for the average rank in \cite{FP, Michel} that rely on the veracity of BSD and GRH for elliptic curves. As it stands, the upper bound in Silverman's conjecture seems out of reach of current techniques. 

\subsection{Previous results on torsion}
The average of $h_n(d)$ has only been obtained for $n = 4$ by Fouvry--Kl\"uners \cite{FK, FK4, FKWeighted} and $n = 3$ by Davenport--Heilbronn \cite{DH} with second order terms given by Bhargava--Shankar--Tsimerman~\cite{MR3090184} and Taniguchi--Thorne~\cite{MR3127806}. Davenport--Heilbronn's result has been recently extended to the non-abelian setting by Lemke Oliver--Wang--Wood~\cite{woodwangoliver}. The order of magnitude for the average of $h_6(d)$ was determined in \cite{CKPS2}. Finally, the striking methods of Smith \cite{Smith1} allow one to find the average of $h_n(d)$ for $n$ an arbitrary power of $2$.

\subsection{Structure of the paper}
We majorize the rank by a moment of the $4$-class rank (resp. $2$-Selmer rank) in \S\ref{sRedei}. In \S\ref{sNT} we adapt the Nair--Tenenbaum method \cite{NT} to our setting of general majorants. The application of this result will give rise to certain moments weighted by fairly general multiplicative functions; these moments are treated in \S\ref{sMoments} by adapting work of Fouvry--Kl\"uners \cite{FKWeighted} and Heath-Brown \cite{HBCongruent}. In \S\ref{sFinal} we combine the various ingredients from the previous sections to prove Theorem~\ref{tFibration} in \S\ref{ss:prf2} and Theorem~\ref{t12} in \S\ref{ss:prf1}.

\subsection*{Notation} 
We will make use of the following notation throughout the paper.
\begin{itemize}
\item The square-free part of an integer $n \neq 0$ is by definition $n/s$, where $s$ is the largest divisor of $n$ that is a square.
\item If $n$ is an integer, we define $\chi_n: G_\Q \rightarrow \mathbb{F}_2$ to be the quadratic character corresponding to $\Q(\sqrt{n})$. This character is surjective if $n$ is not a square.
\item We write $\Delta(n)$ for the discriminant of $\Q(\sqrt{n})$.
\item If $n$ is an odd integer, then we define $n^\ast$ to be the unique integer such that $|n^\ast| = |n|$ and $n^\ast \equiv 1 \bmod 4$.
\item If $A$ is an abelian group, we write $\mathrm{rk}_{2^n} A := \dim_{\FF_2} 2^{n - 1} (A[2^n])$.
\item We write $P^+(n)$ and $P^-(n)$ respectively for the largest and smallest prime divisor of an integer $n>1$. 
By convention, we set $P^+(1) = 1$ and $P^-(1) = \infty$.
\end{itemize}

\subsection*{Acknowledgements}
The first author gratefully acknowledges the support of Dr. Max R\"ossler, the Walter Haefner Foundation and the ETH Z\"urich Foundation. He also acknowledges the support of the Dutch Research Council (NWO) through the Veni grant ``New methods in arithmetic statistics''.

This paper was initiated when ES visited PK at the Institute for Theoretical Studies in Z\"urich and completed during the authors' visits to the Max Planck Institute for Mathematics in Bonn. We sincerely appreciate the generous hospitality and financial support provided by both institutes.
 
\section{R\'edei majorants}
\label{sRedei}
\subsection{Definition of R\'edei majorants}
We start by giving the following definition: 

\begin{definition}
Fix $A > 1$ and fix a function $g: \{(m, n) \in \N^2 : \gcd(m, n) = 1\} \rightarrow [0 , \infty)$. We assume that $g$ is periodic in its second argument, i.e.
\begin{align}
\label{ePeriod}
g(m, n) = g(m, n + m)
\end{align}
for all coprime $m, n \in \N$. We say that $f: \N \rightarrow [0 , \infty)$ is $(A, g)$-R\'edei majorized if for every $\epsilon > 0$, there exists $C_\epsilon > 0$ such that for all coprime $m, n$ we have 
\begin{align}
\label{efWSub}
f(mn) \leq g(m, n) \min\left(A^{\Omega(n)}, C_\epsilon n^\epsilon\right).
\end{align}
\end{definition}

\subsection{R\'edei matrices}
In this subsection we explain how to calculate the narrow $4$-rank of the class group. Let $m$ be a square-free integer and write $\Delta(m)$ for the corresponding quadratic discriminant. Let $p_1 < \dots < p_r$ be the prime divisors of $\Delta(m)$ ordered by their size. The quadratic character $\chi_m: G_\Q \rightarrow \mathbb{F}_2$, corresponding to the field $\Q(\sqrt{m})$, can be uniquely decomposed as
\[
\chi_m = \sum_{i = 1}^r \rho_i,
\]
where each $\rho_i: G_\Q \rightarrow \mathbb{F}_2$ is a quadratic character with conductor a power of $p_i$. If $p_i \neq 2$ (which certainly holds if $i > 1$), then the conductor equals $p_i$ and we have $\rho_i = \chi_{p_i^\ast}$. If $p_i = 2$, then we have $\rho_i \in \{\chi_{-2}, \chi_{-1}, \chi_2\}$. To $m$, we associate the R\'edei matrix $R(m)$ through
\[
R(m) := 
\begin{pmatrix}
\ast & \rho_2(\mathrm{Frob}_{p_1}) & \rho_3(\mathrm{Frob}_{p_1}) & \dots & \rho_r(\mathrm{Frob}_{p_1}) \\
\rho_1(\mathrm{Frob}_{p_2}) & \ast & \rho_3(\mathrm{Frob}_{p_2}) & \dots & \rho_r(\mathrm{Frob}_{p_2}) \\
\rho_1(\mathrm{Frob}_{p_3}) & \rho_2(\mathrm{Frob}_{p_3}) & \ast & \dots & \rho_r(\mathrm{Frob}_{p_3}) \\
\vdots & \vdots & \vdots & \ddots & \vdots \\
\rho_1(\mathrm{Frob}_{p_r}) & \rho_2(\mathrm{Frob}_{p_r}) & \rho_3(\mathrm{Frob}_{p_r}) & \dots & \ast
\end{pmatrix}
,
\]
where the starred entries are determined by the rule that the row sums of $R(m)$ are zero. More formally, we have that the $r_{i, j}(m)$ entry of $R(m)$ is defined as
\[
r_{i, j}(m) =
\begin{cases}
\rho_j(\mathrm{Frob}_{p_i}) &\text{if } i \neq j \\
\sum_{k \neq i} \rho_k(\mathrm{Frob}_{p_i}) &\text{if } i = j.
\end{cases}
\]
The usefulness of R\'edei matrices lies in the following theorem, which we quote from Stevenhagen's work \cite{Stevenhagen}, but originally goes back to R\'edei \cite{Redei}. It shows that the rank of the matrix $R(m)$ determines the $4$-rank of the narrow class group.

\begin{theorem}[\cite{Stevenhagen}]
For all square-free integers $m \neq 1$ we have
\[
\mathrm{rk}_4 \mathrm{Cl}^+(\Q(\sqrt{m})) = r - 1 - \mathrm{rk} \ R(m).
\]
\end{theorem}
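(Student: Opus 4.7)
Set $K = \Q(\sqrt{m})$. The plan is to combine genus theory with R\'edei's classical construction of unramified cyclic degree-$4$ extensions. As a first step, I would appeal to genus theory to identify $\mathrm{Cl}^+(K)[2]$ with the quotient $\FF_2^r/\langle \mathbf{1}\rangle$: each vector $\v=(v_1,\dots,v_r)\in\FF_2^r$ encodes a factorization $\Delta(m)=D_1D_2$ into two coprime fundamental discriminants (with $p_i\mid D_1$ iff $v_i=1$), and the associated $2$-torsion class is the image of the product of the ramified primes lying over the divisors of $D_1$. Only the trivial factorizations (the all-zero and all-one vectors) yield the identity class, so one recovers $\mathrm{rk}_2\,\mathrm{Cl}^+(K)=r-1$.

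Since $\mathrm{rk}_4\,\mathrm{Cl}^+(K)=\dim_{\FF_2}(\mathrm{Cl}^+(K)[2]\cap 2\,\mathrm{Cl}^+(K))$, the heart of the proof is to decide which classes $[\v]$ lie in $2\,\mathrm{Cl}^+(K)$. By R\'edei's criterion, $[\v]$ is a square exactly when $K(\sqrt{D_1})/K$ embeds into an unramified cyclic degree-$4$ extension of $K$. Such an embedding is governed by the solvability of $D_1x^2+D_2y^2=z^2$ with $x,y,z\in\Q$ not all zero, which by Hasse--Minkowski and quadratic reciprocity reduces to the local conditions $\sum_{i}v_i\,\rho_i(\mathrm{Frob}_{p_j})=0$ for every $j$ with $v_j=0$, together with the companion conditions on $D_2$ for $v_j=1$. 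A direct verification shows that, taken together, these conditions are exactly $R(m)\v=\mathbf{0}$; the diagonal entries of $R(m)$ are defined precisely so that the two cases $v_j=0$ and $v_j=1$ collapse into a single linear equation.

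Since the rows of $R(m)$ sum to zero by construction, we have $R(m)\mathbf{1}=\mathbf{0}$, so the natural map $\ker R(m)\to \mathrm{Cl}^+(K)[2]\cap 2\,\mathrm{Cl}^+(K)$ is a surjection with kernel $\langle\mathbf{1}\rangle$. Comparing dimensions yields
\[
\mathrm{rk}_4\,\mathrm{Cl}^+(K)=\dim_{\FF_2}\ker R(m)-1=r-1-\mathrm{rk}\,R(m),
\]
as claimed.

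The principal obstacle is the equivalence between a $2$-torsion class being a square in $\mathrm{Cl}^+(K)$ and the solvability of the ternary form $D_1x^2+D_2y^2=z^2$. The constructive direction requires producing an unramified cyclic degree-$4$ extension $M=K(\sqrt{D_1},\sqrt{\alpha})$ from a rational solution, with $\alpha$ chosen so that its norm down to $\Q$ is a square and the local ramification vanishes; this is delicate at $p_1=2$, where $\rho_1$ may be any of $\chi_{-2},\chi_{-1},\chi_2$ and each sub-case demands a separate local check. The converse uses class field theory to descend the existence of $M$ to a rational solution.
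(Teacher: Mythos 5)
The paper does not prove this statement at all: it is quoted verbatim from Stevenhagen's monograph \cite{Stevenhagen} (who in turn attributes the $4$-rank formula to R\'edei \cite{Redei}), and the only remark the paper adds is that its matrix $R(m)$ is the transpose of Stevenhagen's. So there is no ``paper's own proof'' to match against; what you have produced is an independent sketch, and it is fair to judge it on its own merits. Your route is exactly the classical R\'edei--Reichardt argument that underlies Stevenhagen's treatment: genus theory for $\mathrm{rk}_2$, R\'edei's solvability criterion for the ternary form $D_1x^2+D_2y^2=z^2$, translation of the local conditions into the rows of $R(m)$, and a dimension count exploiting $R(m)\mathbf{1}=\mathbf{0}$.

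One point deserves a warning. You assert that ``$[\v]$ is a square exactly when $K(\sqrt{D_1})/K$ embeds into an unramified cyclic degree-$4$ extension.'' This identifies an element of $\mathrm{Cl}^+(K)[2]\cap 2\,\mathrm{Cl}^+(K)$ with the corresponding object on the \emph{dual} side, namely a quadratic character of $\mathrm{Cl}^+(K)$ that lifts to order~$4$. These are \emph{not} the same condition applied to the same factorization $\Delta = D_1 D_2$, because the Artin pairing on $\mathrm{Cl}^+(K)[2]$ whose Gram matrix is $R(m)$ is in general \emph{not} symmetric (this is visible from quadratic reciprocity: $\rho_i(\mathrm{Frob}_{p_j})$ and $\rho_j(\mathrm{Frob}_{p_i})$ can differ, and indeed the paper's remark about the transpose exists precisely because $R(m)\neq R(m)^T$). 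The left kernel of $R(m)$ (classes $[\v]$ lying in $2\,\mathrm{Cl}^+$) and the right kernel (characters that extend to order~$4$, equivalently factorizations of the second kind) are different subspaces of $\FF_2^r$. The equality $\mathrm{rk}_4\,\mathrm{Cl}^+(K)=r-1-\mathrm{rk}\,R(m)$ still comes out, but only because $\dim\ker R(m)=\dim\ker R(m)^T$ for any square matrix; the dimension count rescues the argument even though the pointwise identification in your step is not literally correct. It would be cleaner to phrase R\'edei's criterion entirely on the character side (the character $\chi_\v$ cutting out $K(\sqrt{D_1})$ extends to order~$4$ if and only if $R(m)\v=\mathbf{0}$) and then invoke the nondegeneracy of the Artin pairing on $\mathrm{Cl}^+(K)[2]\times \mathrm{Cl}^+(K)/2\,\mathrm{Cl}^+(K)$ to read off the $4$-rank. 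Beyond that, the two technical points you flag (the analysis at $p_1=2$ with $\rho_1\in\{\chi_{-2},\chi_{-1},\chi_2\}$, and the descent from the existence of the $D_4$-field $M$ to a rational point on the ternary conic) are exactly where the work lies, and deferring them in a sketch is reasonable.
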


\begin{remark}
Our R\'edei matrix $R(m)$ is the transpose of Stevenhagen's R\'edei matrix, but this does not affect the theorem statement.
\end{remark}

\subsection{A majorant for the 4-rank}
\label{ss4rank}
We will now construct a R\'edei majorant for the $4$-rank of class groups. As a first step, we construct the function $g(m, n)$.

\begin{definition}
\label{def:firstdefn}
Given an integer $a \neq 0$ and an integer $\alpha$ coprime to $a$, we will define a twisted matrix $R(a, \alpha)$. Let $a'$ be the square-free part of $a$ and let $q_1 < \dots < q_r$ be the odd prime divisors of $a'$. The twisted matrix $R(a, \alpha)$ has entries $r_{i, j}(a, \alpha)$ with
\[
r_{i, j}(a, \alpha) =
\begin{cases}
\chi_{q_j}(\mathrm{Frob}_{q_i}) &\text{if } i \neq j \\
\chi_\alpha(\mathrm{Frob}_{q_i}) + \sum_{k \neq i} \chi_{q_k}(\mathrm{Frob}_{q_i}) &\text{if } i = j.
\end{cases}
\]
Observe that $R(a, \alpha)$ is closely related to the matrix $R(a)$, except that the diagonal entries are twisted by the Legendre symbols corresponding to $\alpha$, that we have possibly removed the column and row corresponding to the prime $2$ and that we have used $\chi_{q_j}$ in place of $\chi_{q_j^\ast}$. Since $q_i$ is odd, we observe that $\chi_\alpha(\mathrm{Frob}_{q_i})$ is periodic in $\alpha$ with period $q_i$. Therefore we may define 
$$
g(m, n) := 2^{r - \mathrm{rk} \ R(m, n)} = |\ker(R(m, n))|,
$$
which depends only on $n \bmod{m}$.
\end{definition}

\begin{theorem}
\label{t4rankSub}
Let $k \in \Z_{\geq 1}$, and define $f_k(m) := 2^{k \cdot \mathrm{rk}_4 \mathrm{Cl}^+(\Q(\sqrt{m}))}$. Then we have
$$
f_k(mn) \leq g(m, n)^k 2^{k \omega(n) + k}
$$
for all non-zero coprime integers $m, n$. In particular, $f_k$ is $(4^k, g^k)$-R\'edei majorized.
\end{theorem}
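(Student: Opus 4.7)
The plan is to apply Stevenhagen's theorem and then to bound $\mathrm{rk}\,R(mn)$ from below by $\mathrm{rk}\,R(m,n)$ up to a constant, by realizing the twisted matrix $R(m,n)$ as a principal $r\times r$ minor of $R(mn)$ modulo bounded-rank column operations. Writing $r_{mn}$ for the number of prime divisors of $\Delta(mn)$, Stevenhagen's identity gives
\[
\mathrm{rk}_4\mathrm{Cl}^+(\Q(\sqrt{mn}))=r_{mn}-1-\mathrm{rk}\,R(mn).
\]
Coprimality of $m$ and $n$ forces $r_{mn}-r\leq\omega(n)+1$ (the $+1$ accounting for the possible appearance of the prime $2$ in $\Delta(mn)$), so once an inequality
\[
\mathrm{rk}\,R(mn)\geq\mathrm{rk}\,R(m,n)-C
\]
with absolute $C$ is established, the identity yields $\mathrm{rk}_4\mathrm{Cl}^+(\Q(\sqrt{mn}))\leq\log_2 g(m,n)+\omega(n)+C'$ for some $C'$, whence exponentiating by $k$ produces the claimed majorization (the constant $kC'$ absorbing into the $+k$ on the right-hand side).

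For the rank comparison, consider the principal submatrix $M$ of $R(mn)$ indexed by the odd prime divisors $q_1,\dots,q_r$ of the square-free part of $m$. The off-diagonal entries of $M$ are $\chi_{q_j^\ast}(\mathrm{Frob}_{q_i})$, which differ from the corresponding entries $\chi_{q_j}(\mathrm{Frob}_{q_i})$ of $R(m,n)$ by the rank-one outer product $(\chi_{-1}(\mathrm{Frob}_{q_i}))_i\otimes (\mathbf{1}[q_j\equiv 3\pmod 4])_j$ over $\FF_2$. The diagonal of $M$ is prescribed by the row-sum-zero rule defining $R(mn)$, and the decomposition $\chi_n=\sum_{p\mid n\text{ odd}}\chi_{p^\ast}+\varepsilon_n$ (with $\varepsilon_n$ the $2$-part of $\chi_n$) expresses $M_{i,i}-R(m,n)_{i,i}$ as an $\FF_2$-linear combination of evaluations at $\mathrm{Frob}_{q_i}$ of a bounded list of auxiliary characters, namely $\chi_{-1}$ together with the $2$-part characters of $\chi_m$, $\chi_n$ and $\chi_{mn}$. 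Each such auxiliary character already appears as a column of $R(mn)$ (via the $n$-columns, the $2$-column, and $\FF_2$-combinations thereof), so column operations on $R(mn)$---which do not change its rank---transform $M$ into $R(m,n)$ up to a rank-$O(1)$ perturbation, yielding the desired bound.

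The first assertion of the theorem follows, and the $(4^k,g^k)$-R\'edei majorization is immediate from the two elementary estimates $2^{k\omega(n)}\leq 4^{k\Omega(n)}$ and $2^{k\omega(n)}\leq d(n)^k\ll_\varepsilon n^\varepsilon$ via the divisor bound. The main obstacle is the delicate bookkeeping in the diagonal comparison: one has to track whether the prime $2$ divides $m$, $n$, or only $\Delta(mn)$, the parity of the set $\{k:q_k\equiv 3\pmod 4\}$ which governs the $\chi_{-1}$ corrections, and which of $\chi_{-1},\chi_2,\chi_{-2}$ plays the role of the $2$-part character of $\chi_{mn}$. Once this case analysis is carried out, all corrections collectively contribute rank at most an absolute constant.
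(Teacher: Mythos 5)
Your approach is genuinely different from the paper's. The paper avoids comparing the two matrices directly: it injects $\ker R(mn)$ into the cohomological space $V(mn)=\{\chi : \chi\cup\chi_{-mn}=0,\ \chi\text{ ram.\ only at }2\infty mn\}$, picks out the codimension-$\leq 2$ subspace spanned by the $\chi_{q_i}$ for odd $q_i\mid mn$, and observes that on that subspace the local conditions at odd places are computed by a matrix of which $R(m,n)$ is a submatrix after dropping at most $\omega(n)$ rows and columns. Your route, by contrast, tries to prove the numerical rank inequality $\mathrm{rk}\,R(mn)\geq\mathrm{rk}\,R(m,n)-O(1)$ by hand, viewing $R(m,n)$ as a perturbation of a principal submatrix $M$ of $R(mn)$.

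The off-diagonal part of your analysis is correct: for $i\neq j$ the discrepancy is $\chi_{-1}(\mathrm{Frob}_{q_i})\,\mathbf{1}[q_j\equiv 3\ (4)]$, which is the rank-one outer product $uu^{T}$ with $u_i=\mathbf{1}[q_i\equiv 3\ (4)]$. The gap is in the diagonal. A short computation gives $M_{ii}-R(m,n)_{ii}=a\,u_i+w_i$ (after subtracting the diagonal of $uu^T$), where $a=\sum_j u_j\pmod 2$ and $w_i$ is the $2$-part of $\chi_m$ evaluated at $\mathrm{Frob}_{q_i}$. When $m$ is odd the $w$ and $a\,u$ terms cancel and $E:=M-R(m,n)=uu^T$ has rank $\leq 1$. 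But when $m$ is even and, say, $m'\equiv 1\pmod 4$ so that the $2$-part of $\chi_m$ is $\chi_2$, one gets $E=uu^T+\mathrm{diag}(w)$ with $w_i=\bigl(\tfrac{2}{q_i}\bigr)$. Choosing all $q_i\equiv 3,5\pmod 8$ makes $w=(1,\dots,1)$ and $E=uu^T+I$, which has rank $r$ or $r-1$. A diagonal $0/1$ matrix has rank equal to the number of nonzero entries, and no amount of column operations on $R(mn)$ can turn it into a low-rank matrix: a column operation replaces one column by itself plus another column, which alters a full column of the submatrix, not only its diagonal entry. Your sentence ``column operations ... transform $M$ into $R(m,n)$ up to a rank-$O(1)$ perturbation'' therefore does not hold as stated, and the subsequent ``delicate bookkeeping'' you defer is exactly where the argument breaks.

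The conclusion $\mathrm{rk}\,R(mn)\geq\mathrm{rk}\,R(m,n)-2$ is in fact true (one can read it off backwards from the theorem plus Stevenhagen's identity), so the strategy is not doomed; but the bound $|\mathrm{rk}\,A-\mathrm{rk}\,B|\leq\mathrm{rk}(A-B)$ is far too lossy here, and a genuinely new idea is needed to make the direct matrix comparison work. The paper's cup-product reinterpretation is precisely what makes the comparison clean. Separately, even granting the rank inequality with constant $C$, you would obtain $f_k(mn)\leq g(m,n)^k2^{k\omega(n)+kC}$; the constant $kC$ does not ``absorb into the $+k$'' unless $C\leq 1$, so you should check that the constant you ultimately obtain is small enough for the $(4^k,g^k)$-majorization when $\Omega(n)=1$.
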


\begin{proof}
We may assume without loss of generality that $m$ and $n$ are square-free. Looking at the definition of the R\'edei matrix $R(mn)$, we see that the right kernel of the R\'edei matrix $R(mn)$ naturally injects into the space
$$
V(m n) := \{\chi \in H^1(G_\Q, \FF_2) : \chi \cup \chi_{-mn} = 0, \chi \text{ ram. only at } 2 \infty mn\}.
$$
Indeed, this cup product detects whether the biquadratic extension cut out by $\chi$ and $\chi_{mn}$ lifts to a $D_4$-extension $L$ with the property that $\Gal(L/\Q(\sqrt{mn})) \cong \Z/4\Z$, i.e. $\Q(\sqrt{mn})$ sits in the middle of the field diagram for the resulting $D_4$-extension. The existence of such a lift is a necessary condition for $\chi$ to be a double in the class group.

Therefore we have that
$$
f_k(mn) = 2^{-k} |\ker(R(mn))|^k \leq 2^{-k} |V(m n)|^k.
$$
We consider the subspace of $V(mn)$ of codimension $2$ given by the basis $\chi_{q_1}, \dots, \chi_{q_r}$, where $q_1, \dots, q_r$ are the odd divisors of $mn$. Inspecting the local conditions of $\chi \cup \chi_{-mn}$ at the odd places and writing this down as a matrix, we see that $R(m, n)$ is a submatrix having dropped at most $\omega(n)$ rows and columns. Then the theorem follows from linear algebra.
\end{proof}

\subsection{Selmer matrices}
\label{ssSelmer}
Let $E$ be the elliptic curve given by the equation $y^2 = (x - r_1)(x - r_2)(x - r_3)$ for distinct integers $r_1, r_2, r_3$. We also assume that $\gcd(r_1, r_2, r_3)$ is square-free. Define $\delta_{i, j} := r_i - r_j$ and $\Omega := 2\delta_{1, 2} \delta_{1, 3} \delta_{2, 3}$. The primes dividing $\Omega$ include all finite places of bad reduction for $E$. Given $E$ and a positive, square-free integer $d$ coprime to $\Omega$, we define the twist
$$
E_d: dy^2 = (x - r_1)(x - r_2)(x - r_3).
$$
We shall require the following result about the $2$-Selmer group of $E_d$. Let $M := (\Z/2\Z)^2$. For a finite place $v \not \in \Omega$ and a square-free integer $d$, we define $\mathcal{L}_{d, v} \subseteq H^1(G_{\Q_v}, M) := \Q_v^\ast/\Q_v^{\ast 2} \times \Q_v^\ast/\Q_v^{\ast 2}$
$$
\mathcal{L}_{d, v} :=
\begin{cases}
H^1_{\text{nr}}(G_{\Q_v}, M) &\text{if } v(d) = 0 \\
\{(1, 1), (\delta_{12} \delta_{13}, d \delta_{12}), (d \delta_{21}, \delta_{21} \delta_{23}), (d \delta_{31}, d \delta_{32})\} &\text{if } v(d) = 1.
\end{cases}
$$
Note that $\mathcal{L}_{d, v}$ is a subgroup of $H^1(G_{\Q_v}, M)$. Writing $\mathbf{r} = (r_1, r_2, r_3)$, we define $\mathrm{Sel}_{\mathbf{r}}(M, d)$ as
$$
\mathrm{Sel}_{\mathbf{r}}(M, d) := \ker\left(H^1(G_\Q, M) \rightarrow \prod_{\substack{v \not \in \Omega \\ v \text{ finite}}} \frac{H^1(G_{\Q_v}, M)}{\mathcal{L}_{d, v}}\right).
$$

\begin{lemma}
\label{lSelmerBound}
Let $E$ be an elliptic curve of the shape $y^2 = (x - r_1)(x - r_2)(x - r_3)$ for distinct integers $r_1, r_2, r_3$ with $\gcd(r_1, r_2, r_3)$ square-free. Let $d$ be a positive integer coprime to $\Omega$. Then we have $\mathrm{Sel}^2(E_d) \subseteq \mathrm{Sel}_{\mathbf{r}}(M, d)$.

Moreover, suppose that the integers $r_1, r_2, r_3$ satisfy $\gcd(r_1, r_2, r_3) = 1$. In that case there exists a finite collection $\mathcal{C}$ of vectors $\mathbf{r}$ such that
$$
|\mathrm{Sel}^2(E_d)| \leq \max_{\mathbf{r} \in \mathcal{C}} |\mathrm{Sel}_{\mathbf{r}}(M, t)|
$$
for all square-free integers $d$, where $t$ is the largest positive divisor of $d$ coprime to $\Omega$.
\end{lemma}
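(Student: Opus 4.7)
The plan is to prove the two assertions separately: a local-to-global containment at each place $v \notin \Omega$ (Part 1), followed by a change of variables absorbing the part of $d$ divisible by primes of $\Omega$ (Part 2).

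For Part 1, I would identify $\mathrm{Sel}^2(E_d) \subseteq H^1(G_\Q, M)$ via the standard 2-descent (using that $E_d[2] \cong E[2] = M$ as Galois modules since all 2-torsion is $\Q$-rational), and note that it suffices to show at each finite place $v \notin \Omega$ that $\mathrm{Im}(E_d(\Q_v)/2E_d(\Q_v) \to H^1(\Q_v, M)) \subseteq \mathcal{L}_{d, v}$. When $v(d) = 0$, the odd prime $v$ does not divide the discriminant of a minimal model of $E_d$, so $E_d$ has good reduction at $v$ and the image of the local Kummer map equals $H^1_{\text{nr}}(\Q_v, M) = \mathcal{L}_{d,v}$. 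When $v(d) = 1$, a case analysis on the valuations $v(x - r_i)$ for a $\Q_v$-point $(x, y) \in E_d(\Q_v)$---using that $v(\delta_{ij}) = 0$ and that $\sum_i v(x - r_i) = 1 + 2v(y)$ is odd---yields two regimes: either exactly one $v(x - r_i)$ is positive and odd, or $v(x - r_1) = v(x - r_2) = v(x - r_3) = v(x) < 0$ is odd. A direct computation of the image $(x - r_1, x - r_2)$ modulo squares in either regime shows that the class lies in the subgroup generated by the images of the 2-torsion points $(r_i, 0)$. The images of the 2-torsion themselves are computed from the limiting identity $(x - r_i)(x - r_j)(x - r_k) = d y^2$, which forces $x - r_i$ to be $d \, \delta_{ij} \delta_{ik}$ modulo squares near $(r_i, 0)$; a sign check then confirms that the resulting subgroup agrees with the four classes listed in $\mathcal{L}_{d, v}$.

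For Part 2, I would factor a square-free integer $d$ as $d = \epsilon s t$, where $\epsilon \in \{\pm 1\}$, $s > 0$ is the largest divisor of $|d|$ with $s \mid \rad(\Omega)$, and $t = |d|/s > 0$ is the largest positive divisor of $d$ coprime to $\Omega$. The change of variables $X = sx$, $Y = s^2 y$ (composed with $X \mapsto -X$ when $\epsilon = -1$) induces a $\Q$-isomorphism $E_d \xrightarrow{\sim} E'_t$, where $E'$ is the elliptic curve $Y^2 = (X - s'_1)(X - s'_2)(X - s'_3)$ with $s'_i := \epsilon s r_i$. The new triple $\mathbf{r}' = (s'_1, s'_2, s'_3)$ satisfies $\gcd(s'_1, s'_2, s'_3) = s$, which is square-free, and $t$ is coprime to the new discriminant $2 \prod_{i < j}(s'_i - s'_j) = (\epsilon s)^3 \Omega$ (since $\gcd(t, s) = 1$ by square-freeness of $d$ and $\gcd(t, \Omega) = 1$ by construction); hence Part 1 applies to $E'_t$ and gives $|\mathrm{Sel}^2(E_d)| = |\mathrm{Sel}^2(E'_t)| \leq |\mathrm{Sel}_{\mathbf{r}'}(M, t)|$. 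Taking $\mathcal{C}$ to be the finite collection of triples $(\epsilon s r_1, \epsilon s r_2, \epsilon s r_3)$ with $\epsilon \in \{\pm 1\}$ and $s$ ranging over the positive divisors of $\rad(\Omega)$ completes the argument.

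The main obstacle will be the sign bookkeeping in Part 1 when $v(d) = 1$: one must check that the four elements produced by the 2-torsion descent formulas genuinely coincide with those listed in the definition of $\mathcal{L}_{d, v}$ (up to equivalent representatives differing by the class $(d, d)$), and that non-torsion $\Q_v$-points do not produce any classes outside this subgroup---both of which reduce to Hensel-lemma verifications once the relevant valuations have been computed.
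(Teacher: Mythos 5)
Your proposal is correct and takes essentially the same approach as the paper: Part~1 is the standard $2$-descent local computation (which the paper delegates to Kane and Watkins, and which you reconstruct, correctly, from the valuation dichotomy at $v \mid d$ and the $2$-torsion images), and Part~2 uses the change of variables $X = sx$, $Y = s^2 y$ to identify $E_d$ with $E'_t$, leading to the collection $\mathcal{C} = \{(\epsilon s r_1, \epsilon s r_2, \epsilon s r_3)\}$, which is precisely the collection $(cr_1, cr_2, cr_3)$ the paper names. You simply spell out the isomorphism, the identity $\gcd(\mathbf{r}') = s$, and the coprimality of $t$ with $\Omega' = s^3\Omega$, all of which the paper leaves implicit.
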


\begin{proof}
The first part follows immediately from a standard $2$-descent, see Kane \cite[p.~1271]{Kane} or \cite[Section 7]{Watkins} for details. For the second part, one takes the collection $\mathcal{C}$ to be $(cr_1, cr_2, cr_3)$ for square-free integers $c$ all of whose prime divisors are in $\Omega$. Then the second part is a consequence of the first part.
\end{proof}

For $t$ coprime to $\Omega$, we now construct a linear operator with the eventual goal of writing $\mathrm{Sel}_{\mathbf{r}}(M, t)$ as the kernel of a matrix. The Selmer conditions $\mathcal{L}_{t, v}$ are self-dual with respect to the pairing
\[
((x_1, x_2), (x_1', x_2')) = (x_1, x_2')_v (x_2, x_1')_v.
\]
Suppose that $v(t) = 1$. Because the local conditions are self-dual, $(x_1, x_2)$ satisfies the local conditions at $v$ if and only if
$
(x_1, t \delta_{12})_v (x_2, \delta_{12} \delta_{13})_v = (x_1, \delta_{21} \delta_{23})_v (x_2, t \delta_{21})_v = 1.
$
We define $W$ to be the subspace of $H^1(G_\Q, M)$ unramified outside $\Omega$ and the primes dividing $t$. Concretely, we may view $W$ as pairs of square-free integers, of any sign, such that all prime divisors divide $\Omega \cdot t$.

Since $(x_1, x_2)$ has to be unramified for the places $v \not \in \Omega$ satisfying $v(t) = 0$, it is clear that $\mathrm{Sel}_{\mathbf{r}}(M, t) \subseteq W$. For the places with $v(t) = 1$, we define a linear map $\phi_v: W \rightarrow \mu_2^2$ given by
\begin{align}
\label{ePhiv}
(x_1, x_2) \mapsto \left((x_1, t \delta_{12})_v (x_2, \delta_{12} \delta_{13})_v, (x_1, \delta_{21} \delta_{23})_v (x_2, t \delta_{21})_v\right).
\end{align}
Then $\mathrm{Sel}_{\mathbf{r}}(M, t)$ is precisely the intersection, denoted $K$, of $\ker(\phi_v)$ among the $v$ satisfying $v(t) = 1$. Let $W'$ be the subspace of $W$ generated by $(x_1, x_2)$, where both $x_i$ consist of positive prime divisors of $t$. Then we have
\begin{align}
\label{eKernelBound}
|K| = \frac{|W' + K| |W' \cap K|}{|W'|} \leq \frac{|W|}{|W'|} |W' \cap K| \leq 4^{|\Omega| + 1} |W' \cap K|.
\end{align}
We are now ready to describe how to calculate $W' \cap K$ as the kernel of a square matrix. Write $t = p_1 \cdot \ldots \cdot p_r$ with $p_1 < \dots < p_r$. Consider the block matrix
$$
M'_\mathbf{r}(t) =
\begin{pmatrix}
A & D \\
D' & B
\end{pmatrix}
,
$$
where $D$ and $D'$ are diagonal matrices with
\[
D_{i, i} = \left(\frac{\delta_{12} \delta_{13}}{p_i}\right), \quad \quad D_{i, i}' = \left(\frac{\delta_{21} \delta_{23}}{p_i}\right),
\]
where our Legendre symbols take values in $\mathbb{F}_2$ (by identifying $\mathbb{F}_2$ with $\mu_2$) only for this subsection. Let us now describe the entries of $A$ and $B$, called $a_{i, j}$ and $b_{i, j}$ respectively. We have
$$
a_{i, j} =
\begin{cases}
\left(\frac{p_j}{p_i}\right) &\text{if } i \neq j \\
\left(\frac{\delta_{21}}{p_i}\right) + \sum_{k \neq i} \left(\frac{p_k}{p_i}\right) &\text{if } i = j
\end{cases}
$$
and
$$
b_{i, j} = 
\begin{cases}
\left(\frac{p_j}{p_i}\right) &\text{if } i \neq j \\
\left(\frac{\delta_{12}}{p_i}\right) + \sum_{k \neq i} \left(\frac{p_k}{p_i}\right) &\text{if } i = j.
\end{cases}
$$
With this construction we have that the right kernel of $M'_\mathbf{r}(t)$ is exactly $W' \cap K$. For a positive square-free integer $t$ coprime to $\Omega$, we define $f_\mathbf{r}(t)$ to be the size of $|\ker(M'_\mathbf{r}(t))|$. We extend $f_\mathbf{r}$ to all non-zero integers by the rules $f_\mathbf{r}(t) = f_\mathbf{r}(t p)$ for all $p$ dividing $\Omega$, $f_\mathbf{r}(t) = f_\mathbf{r}(-t)$ and $f_\mathbf{r}(t) = f_\mathbf{r}(t s^2)$.

More generally, given an integer $\alpha$ coprime to $t$, we construct a matrix $M'_\mathbf{r}(t, \alpha)$ of the shape
$$
M'_\mathbf{r}(t, \alpha) =
\begin{pmatrix}
A_\alpha & D \\
D' & B_\alpha
\end{pmatrix}
,
$$
where $D$ and $D'$ are the same matrices as before, and $A_\alpha$ and $B_\alpha$ are given by
$$
a_{i, j, \alpha} =
\begin{cases}
\left(\frac{p_j}{p_i}\right) &\text{if } i \neq j \\
\left(\frac{\alpha \delta_{21}}{p_i}\right) + \sum_{k \neq i} \left(\frac{p_k}{p_i}\right) &\text{if } i = j
\end{cases},
\quad \quad
b_{i, j, \alpha} = 
\begin{cases}
\left(\frac{p_j}{p_i}\right) &\text{if } i \neq j \\
\left(\frac{\alpha \delta_{12}}{p_i}\right) + \sum_{k \neq i} \left(\frac{p_k}{p_i}\right) &\text{if } i = j.
\end{cases}
$$
For a positive square-free integer $t$ coprime to $\Omega$ and an integer $\alpha$ coprime to $t$, we define $g_\mathbf{r}(t, \alpha)$ to be the size of the kernel of $M'_\mathbf{r}(t, \alpha)$. We extend this to all non-zero integers $d$ and all integers $\alpha$ coprime to $d$ by demanding that 
\begin{equation}
\label{def:stackbundles}
g_\mathbf{r}(d, \alpha) = g_\mathbf{r}(t, \alpha)
\end{equation} 
with $t$ the largest square-free divisor of $d$ that is coprime to $\Omega$.

\begin{theorem}
\label{tSelmerSub}
Let $E$ be an elliptic curve of the shape $y^2 = (x - r_1)(x - r_2)(x - r_3)$ with $r_1, r_2, r_3$ distinct integers satisfying $\gcd(r_1, r_2, r_3) = 1$. Let $k \in \Z_{\geq 1}$ and let $\mathcal{C}$ be the collection from Lemma \ref{lSelmerBound}. Then we have
$$
|\mathrm{Sel}^2(E_d)|^k \leq 4^{k \cdot |\Omega| + k} \max_{\mathbf{r} \in \mathcal{C}} f_\mathbf{r}(d)^k
$$
for all non-zero integers $d$, and
$
f_\mathbf{r}(mn)^k \leq g_\mathbf{r}(m, n)^k 4^{k \cdot \omega(n)}
$
for all non-zero coprime integers $m, n$. In particular, $f_\mathbf{r}^k$ is $(4^k, g_{\b r}^k)$-R\'edei majorized.
\end{theorem}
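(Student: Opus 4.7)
The first inequality is an immediate assembly of the structural setup of Subsection~\ref{ssSelmer}: Lemma~\ref{lSelmerBound} gives $|\mathrm{Sel}^2(E_d)| \le \max_{\mathbf{r} \in \mathcal{C}} |\mathrm{Sel}_\mathbf{r}(M, t)|$, where $t$ is the largest positive divisor of $d$ coprime to $\Omega$, and the self-duality argument preceding \eqref{eKernelBound} identifies $\mathrm{Sel}_\mathbf{r}(M, t)$ with the kernel $K$. Combining with \eqref{eKernelBound} and the construction of $M'_\mathbf{r}(t)$ as the matrix whose kernel is $W' \cap K$ yields $|\mathrm{Sel}_\mathbf{r}(M, t)| \le 4^{|\Omega|+1} f_\mathbf{r}(t) = 4^{|\Omega|+1} f_\mathbf{r}(d)$, using the extension rules defining $f_\mathbf{r}$ on general nonzero integers. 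Raising to the $k$-th power yields the first inequality.

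For the second inequality, we reduce without loss of generality to the case that $m$ and $n$ are positive square-free integers coprime to $\Omega$; the invariance properties of $f_\mathbf{r}$ (sign, squares, $\Omega$-primes) and the first-argument reduction of $g_\mathbf{r}$ handle the general case. The core observation is that $M'_\mathbf{r}(m, n)$ is precisely the principal submatrix of $M'_\mathbf{r}(mn)$ obtained by retaining only the $2\omega(m)$ rows and columns indexed by the prime divisors of $m$ (both the $A$- and $B$-type indices), hence dropping $c := 2\omega(n)$ rows and $c$ columns. The off-diagonal entries $(p_j/p_i)$ and the diagonal blocks $D, D'$ restrict manifestly, while the $A$-block diagonal entry at $p_i | m$ is matched via
\[
\left(\frac{\delta_{21}}{p_i}\right) + \sum_{k | mn,\, k \ne i} \left(\frac{p_k}{p_i}\right) = \left(\frac{\delta_{21}}{p_i}\right) + \left(\frac{n}{p_i}\right) + \sum_{k | m,\, k \ne i} \left(\frac{p_k}{p_i}\right) = \left(\frac{n\delta_{21}}{p_i}\right) + \sum_{k | m,\, k \ne i} \left(\frac{p_k}{p_i}\right),
\]
using the identity $\sum_{k | n}(p_k/p_i) = (n/p_i)$ (addition in $\mathbb{F}_2$ corresponding to multiplication of Legendre symbols under $\mu_2 \cong \mathbb{F}_2$); the $B$-block diagonal is handled identically with $\delta_{12}$ in place of $\delta_{21}$. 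The elementary fact that $\mathrm{rank}(N) \le \mathrm{rank}(M)$ for any submatrix $N$ of $M$ implies, for a principal submatrix obtained by dropping $c$ rows and columns, the nullity bound $\dim\ker M \le \dim\ker N + c$; applied with $c = 2\omega(n)$ this gives $f_\mathbf{r}(mn) \le 4^{\omega(n)} g_\mathbf{r}(m, n)$, and raising to the $k$-th power yields the second inequality.

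The R\'edei-majorant claim is a short deduction from the second inequality. Since $\omega(n) \le \Omega(n)$ we obtain $4^{k\omega(n)} \le (4^k)^{\Omega(n)}$, while the divisor bound $2^{\omega(n)} \le d(n) \ll_\epsilon n^{\epsilon/(2k)}$ gives $4^{k\omega(n)} = (2^{\omega(n)})^{2k} \ll_\epsilon n^\epsilon$ for every $\epsilon > 0$; together these verify the majorization condition \eqref{efWSub} for $f_\mathbf{r}^k$ with constants $(4^k, g_\mathbf{r}^k)$. The principal technical content of the argument lies in the matrix-matching step of the second paragraph: once the aggregate $n$-contributions are recognised as producing exactly the $\chi_n$-twist defining the diagonal of $M'_\mathbf{r}(m, n)$, the remaining work is standard linear algebra, with the (mild) bookkeeping nuisance being the reduction of general coprime $m, n$ to the square-free $\Omega$-coprime case.
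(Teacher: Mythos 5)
Your proposal is correct and follows essentially the same two-step strategy as the paper's own proof (reduce the first inequality to Lemma~\ref{lSelmerBound} together with~\eqref{eKernelBound}, and for the second inequality realise $M'_\mathbf{r}(m,n)$ as a principal submatrix of $M'_\mathbf{r}(mn)$ after the square-free reduction, then use the nullity bound under dropping $2\omega(n)$ rows and columns). The only substantive difference is one of exposition: you make explicit the diagonal-matching identity $\sum_{p_k \mid n}(p_k/p_i) = (n/p_i)$ in $\mathbb{F}_2$, which is the reason the $\chi_n$-twist appears precisely on the diagonal of the submatrix, and you spell out the short deduction of the R\'edei-majorized condition from $\omega(n) \le \Omega(n)$ and the divisor bound; the paper leaves both of these to the reader. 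In the reduction step you additionally pass to $n$ coprime to $\Omega$, whereas the paper only reduces to $m,n$ square-free; this extra step deserves a word of justification, since $g_\mathbf{r}(m,n)$ depends on the residues of $n$ (not merely of its $\Omega$-coprime part) modulo the primes of $m$, but the paper is no more explicit on this point, so your treatment is consistent with the source.
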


\begin{proof}
To prove the first inequality, we may reduce to the case that $d$ is square-free by definition of $E_d$ and $f_\mathbf{r}(d)$. Then Lemma \ref{lSelmerBound} and equation \eqref{eKernelBound} confirm the validity of
$$
|\mathrm{Sel}^2(E_d)|^k \leq \max_{\mathbf{r} \in \mathcal{C}} |\mathrm{Sel}_\mathbf{r}(M, t)|^k \leq 4^{k \cdot |\Omega| + k} \max_{\mathbf{r} \in \mathcal{C}} f_\mathbf{r}(t)^k = 4^{k \cdot |\Omega| + k} \max_{\mathbf{r} \in \mathcal{C}} f_\mathbf{r}(d)^k
$$
with $t$ the largest positive divisor of $d$ coprime to $\Omega$.

To prove the second inequality, we may assume without loss of generality that $m$ and $n$ are square-free. Since the matrix $M'_\mathbf{r}(m, n)$ is a submatrix of $M'_\mathbf{r}(mn)$ obtained by adding at most $2\omega(n)$ rows and columns, the result follows.
\end{proof}

\subsection{Level of distribution results}
In this subsection we state the level of distribution results that we will use for the sieving process. Our results in this subsection are not optimal but will suffice for our purposes. Let $\delta(m)$ be the multiplicative function satisfying
\begin{align}
\label{eh3Density}
\delta(p^e) = 
\begin{cases}
\frac{1}{p + 1}, &\text{if } p \geq 2 \text{ and } e = 1, \\
0, &\text{if } p > 2 \text{ and } e \geq 2, \\
\frac{1}{3}\mathds 1_{\{2\}}(e)+\frac{1}{6}\mathds 1_{\{3\}}(e), &\text{if } p = 2 \text{ and } e \geq 2.
\end{cases}
\end{align}

\begin{lemma}
\label{lh3Level}
Let $m \in \Z_{\geq 1}$ and let $q_1 < \dots < q_r$ be the odd prime divisors of $m$. Let $S$ be a subset of $\{1, \dots, r\}$, and for each $i \in S$, let $\epsilon_i \in \{\pm 1\}$. Then we have
\begin{align*}
\sum_{\substack{0 < \Delta(n) < X, m\mid n \\ \left(\frac{n/m}{q_i}\right) = \epsilon_i \ \forall i \in S}} (h_3(n) - 1) 
= \frac{X \delta(m)}{2^{|S|} \pi^2} + O(X^{6/7})
\end{align*}
uniformly for all $m \leq X^{1/100}$, and similarly
\begin{align*}
\sum_{\substack{0 < -\Delta(n) < X, m\mid n \\ \left(\frac{n/m}{q_i}\right) = \epsilon_i \ \forall i \in S}} (h_3(n) - 1) 
= \frac{3X \delta(m)}{2^{|S|} \pi^2} + O(X^{6/7})
\end{align*}
uniformly for all $m \leq X^{1/100}$.
\end{lemma}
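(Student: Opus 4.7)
The strategy is to convert the 3-torsion average into a count of cubic fields via the Scholz reflection/Hasse correspondence, and then to invoke a power-saving version of the Davenport--Heilbronn theorem with congruence and character conditions. Concretely, for a fundamental discriminant $\Delta$ one has $h_3(\Delta) - 1 = 2\cdot\#\{K/\Q \text{ non-Galois cubic} : \operatorname{disc}(K) = \Delta\}/1$ (or, equivalently, one can count $S_3$-cubic fields via the Delone--Faddeev bijection with $\mathrm{GL}_2(\Z)$-orbits of irreducible integral binary cubic forms). Therefore the sum on the left hand side equals twice a weighted count of cubic fields $K/\Q$ with $|\operatorname{disc}(K)| < X$, of prescribed sign, for which $m$ divides the squarefree part of $\operatorname{disc}(K)$, and for which $\bigl(\frac{\operatorname{disc}(K)/m}{q_i}\bigr) = \epsilon_i$ for each $i \in S$.

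Next I would interpret the arithmetic conditions as local conditions at the primes dividing $m\prod_{i\in S} q_i$. The condition $m \mid n$ with $n$ squarefree prescribes the isomorphism type of the étale cubic $\Q_p$-algebra $K \otimes \Q_p$ for each $p \mid m$ (in particular forcing $p$ to ramify with the assigned discriminant exponent), while each symbol condition $\bigl(\frac{\cdot}{q_i}\bigr)=\epsilon_i$ is a quadratic residue condition modulo $q_i$ on the unramified part of $\operatorname{disc}(K)/m$. After expanding the indicator functions of these local conditions by Fourier inversion on $(\Z/m'\Z)^{\times}$, one is left with sums to which the quantitative Davenport--Heilbronn theorem of Bhargava--Shankar--Tsimerman \cite{MR3090184} and Taniguchi--Thorne \cite{MR3127806} applies. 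The local densities then produce $\delta(m)$: at an odd prime $p$ the mass of étale cubic $\Q_p$-algebras with $p$ partially ramified (exponent $1$) equals $\frac{1}{p+1}$, and at $p=2$ the masses of the ramified étale cubic $\Q_2$-algebras with discriminant exponent $2$ and $3$ are $\frac{1}{3}$ and $\frac{1}{6}$, matching the piecewise formula for $\delta(2^e)$. Each quadratic residue condition at a distinct odd $q_i$ contributes an independent factor $\frac{1}{2}$, yielding the $2^{-|S|}$. The global constants $\frac{1}{\pi^2}$ and $\frac{3}{\pi^2}$ are the classical Davenport--Heilbronn constants in the totally real and totally complex cases respectively.

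\textbf{Main obstacle.} The principal technical issue is \emph{uniformity} in $m\le X^{1/100}$. The power-saving bounds of \cite{MR3090184,MR3127806} are typically stated for a fixed set of local conditions, and their error terms carry implicit polynomial dependence on the level; a careful bookkeeping is needed to ensure the combined error remains $O(X^{6/7})$ uniformly. Concretely one expects a shape $O(X^{5/6+\epsilon} m^{O(1)})$ for the error with $m$-dependent local conditions, and the generous exponent $6/7$ in the statement is chosen so that the restriction $m \le X^{1/100}$ leaves ample slack to absorb this polynomial factor. A uniform level of distribution result of exactly this type was already established for $h_3$ in the analogous context of $h_6$ in the authors' prior work \cite{CKPS2}; the additional character conditions here introduce no new difficulty beyond splitting via Fourier inversion as indicated above, so the proof reduces to importing and lightly adapting those uniform estimates.
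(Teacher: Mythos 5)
Your high-level strategy matches the paper's: interpret $h_3(n)-1$ as twice a count of $S_3$-cubic fields and impose the divisibility and residue-symbol conditions as local conditions on the \'etale cubic $\Q_p$-algebras. However, there are two concrete points where the proposal diverges from, and is weaker than, the paper's argument.

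\textbf{Choice of quantitative input.} You cite Bhargava--Shankar--Tsimerman \cite{MR3090184} and Taniguchi--Thorne \cite{MR3127806} (the second-order-term papers), and then acknowledge that their error terms "carry implicit polynomial dependence on the level" and hope a bookkeeping argument recovers the needed uniformity. The paper instead invokes \cite[Theorem~1.3]{BTT} (Bhargava--Taniguchi--Thorne, \emph{Math.\ Ann.}\ 2024), which is tailor-made for this: it gives
$N_3(X,\Sigma)=\frac{X}{12\zeta(3)}\prod_p C_p(\Sigma_p)+O(2^\kappa X^{5/6})$,
where $\kappa$ is the number of places with non-trivial local specification. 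Since $\kappa\ll\omega(m)+O(1)$, and $m\leq X^{1/100}$ forces $2^\kappa\ll X^{o(1)}$, the error is immediately $\ll X^{5/6+o(1)}\leq X^{6/7}$, with no further bookkeeping. Appealing to \cite{MR3090184,MR3127806} as you do leaves a genuine gap: those papers do not state an error term with explicit exponential control in the number of conditions, and one would have to re-derive it. So the "main obstacle" you flag is exactly the step you have not closed; the paper closes it by choosing the correct reference.

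\textbf{The Fourier-inversion detour.} You propose to "expand the indicator functions of these local conditions by Fourier inversion on $(\Z/m'\Z)^\times$" before invoking the counting theorem. This is both unnecessary and potentially problematic, because the quantitative Davenport--Heilbronn theorems take local \emph{specifications} (choices of \'etale algebra at each place), not twists by arbitrary Dirichlet characters, as input. The paper avoids this by observing that the two conditions at an odd prime $q_i\in S$ ($q_i\mid n$ and $(\tfrac{n/m}{q_i})=\epsilon_i$) are simultaneously encoded by a \emph{single} local condition: $\Sigma_{q_i}=\Q_{q_i}\times\Q_{q_i}(\sqrt{q_i t_i})$ with $t_i$ a unit whose square class matches $\epsilon_i$. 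The local density $C_{q_i}(\Sigma_{q_i})=\frac{q_i}{2q_i^2+2q_i+2}$ then directly produces both the $\frac{1}{q_i+1}$ factor of $\delta$ and the $\frac12$ factor for the residue condition. Your approach would reach the same densities, but passing through characters forces you to control character-twisted counts of cubic fields, which is not what \cite{MR3090184,MR3127806} supply off the shelf. Replace the Fourier step by the direct local specification and cite \cite{BTT} and the argument is sound.
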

 
\begin{proof}
Let us prove the first part of the lemma, the second part may be proven by an identical procedure. For a prime $p$, we let $\Sigma_p$ be a set of (isomorphism classes of) \'etale cubic algebras over $\Q_p$. Given a sequence $\Sigma = (\Sigma_p)_p$, we define $N_3(X, \Sigma)$ to be the set of cubic fields with $0 < \mathrm{Disc}(F) < X$ such that $F \otimes \Q_p \in \Sigma_p$ for all $p$. We write $A_p$ for the set of all \'etale cubic algebras and we write $A_p'$ for the set of all \'etale cubic algebras that are not totally ramified. We call a local specification $\Sigma$ valid if the set of primes $p$ for which $\Sigma_p \neq A_p, A_p'$ is finite. Then \cite[Theorem 1.3]{BTT} shows that
\begin{align}
\label{eBTT}
N_3(X, \Sigma) = \frac{X}{12 \zeta(3)} \prod_p C_p(\Sigma_p) + O\left(2^\kappa X^{5/6}\right),
\end{align}
where $\kappa$ equals the number of places for which $\Sigma_p \neq A_p, A_p'$. For each $i = 1, \dots, r$, let $t_i$ be a square in $\Q_p^\ast$ if $\epsilon_i = 1$ and let $t_i$ be a non-square unit in $\Q_p^\ast$ if $\epsilon_i = -1$. Our lemma is clear if $m$ is divisible by $p^2$ for some $p \geq 3$ or if $m$ is divisible by $16$. Otherwise, we apply equation \eqref{eBTT} with the following valid local specification $\Sigma = (\Sigma_p)_p$:
$$
\Sigma_p = 
\begin{cases}
\Q_p \times \Q_p(\sqrt{p t_i}) &\text{if } p = q_i \text{ for some } i \in S \\
\{\Q_2 \times \Q_2(\sqrt{x}) : x \in \{-1, -5, 2, -2, 10, -10\}\} &\text{if } p = 2 \text{ and } v_2(m) \in \{1, 2\} \\
\{\Q_2 \times \Q_2(\sqrt{x}) : x \in \{2, -2, 10, -10\}\} &\text{if } p = 2 \text{ and } v_2(m) = 3 \\
A_p' &\text{otherwise.}
\end{cases}
$$
With this local specification we have by construction
$$
\sum_{\substack{0 < \Delta(n) < X \\ n \equiv 0 \bmod m \\ \left(\frac{n/m}{q_i}\right) = \epsilon_i \ \forall i \in S}} (h_3(n) - 1) = 2N_3(X, \Sigma).
$$
Using \cite[Table 1]{BTT}, the formula $C_2(E) = \frac{6c_2}{7}$ in \cite[Section 1]{BTT} for a partially ramified cubic \'etale $\Q_2$-algebra and the formulas for $c_2$ in \cite[Section 8]{BTT}, one computes
$$
C_p(\Sigma_p) =
\begin{cases}
\frac{p}{2p^2 + 2p + 2}, &\text{if } p = q_i \text{ for some } i \in S \\
\frac{2}{7} \mathds 1_{\{1,2\}}(v_2(m))+\frac{1}{7} \mathds 1_{\{3\}}(v_2(m)), &\text{if } p = 2 \text{ and } v_2(m) \in \{1, 2\} \\
\frac{p^2 + p}{p^2 + p + 1}, &\text{otherwise.}
\end{cases}
$$
This concludes the proof by writing $\zeta(3)^{-1} = \prod_p (1 - p^{-3})$, multiplying the Euler products and recognizing that $6/\pi^2 = \zeta(2)^{-1} = \prod_p (1 - p^{-2})$.
\end{proof}

Given an integer $n \geq 1$, a divisor $a$ of $n$ and an element $x \in \Z/n\Z$ satisfying $x \equiv 0 \bmod a$, we define $x/a$ to be the unique element of $\Z/(n/a)\Z$ that maps to $x$ under the multiplication by $a$ map. Furthermore, if $q$ is an odd prime dividing some integer $n$, we define for any $a \in \Z/n\Z$ the Legendre symbol $(a/q)$ to be the unique integer in $\{1, -1, 0\}$ such that
\[
\left(\frac{a}{q}\right) \equiv a^{\frac{q - 1}{2}} \bmod q.
\]
Let $a \geq 1$ be an integer and let $q_1, \dots, q_r$ be the odd prime divisors of $a$. For each $i \in \{1, \dots, r\}$, choose $\epsilon_i \in \{1, -1, 0\}$ and let $\bm{\epsilon} = (\epsilon_i)_{1 \leq i \leq r}$ be the resulting vector. Define
\[
h(a, \bm{\epsilon}) := \frac{\left|\left\{(t_1, \dots, t_n) \in (\frac{\Z}{a q_1 \cdot \ldots q_r \Z})^n : P(t_1, \dots, t_n) \equiv 0 \bmod a, \left(\frac{P(t_1, \dots, t_n)/a}{q_i}\right) = \epsilon_i\right\}\right|}{a^n q_1^n \cdot \ldots \cdot q_r^n}
\]
and
\[
h(a) = \sum_{\bm{\epsilon}} h(a, \bm{\epsilon}) = \frac{\left|\left\{(t_1, \dots, t_n) \in (\frac{\Z}{a \Z})^n : P(t_1, \dots, t_n) \equiv 0 \bmod a\right\}\right|}{a^n}.
\]

\begin{lemma}
\label{lPolyLevel}
Let $P \in \Z[t_1, \dots, t_n]$ be a separable polynomial of degree at least $1$. Then there exists $\theta > 0$, depending only on the degree of $P$, and $C > 0$, depending only on $P$, such that
uniformly for all $B \geq 1$,
all $ a \leq B^{\theta}$ and $\bm{\epsilon} = (\epsilon_i)_{1 \leq i \leq r}$ we have 
$$
\left|\#\l\{\b t \in \Z^n: \max_i |t_i| \leq B, a\mid P(\b t ), \left(\frac{P(t_1, \dots, t_n)/a}{q_i}\right) = \epsilon_i\r\}
 - h(a, \bm{\epsilon}) \cdot (2B)^n\right| \leq C B^{n - \theta}
.$$
Moreover, there are constants $C_1, \dots, C_5>0$ depending only on $P$ such that 
\begin{enumerate}
\item[(i)] 
$
h(p, \epsilon) \leq h(p) \leq C_1/p
$ 
for all odd primes $p$ and all $\epsilon$;
\item[(ii)] 
$
h(p^e, \epsilon) \leq h(p^e) \leq C_2/p^2
$ 
for all $e \geq 2$, all odd $p$ and all $\epsilon$;
\item[(iii)] 
$
h(p^e, \epsilon) \leq h(p^e) \leq C_3/p^{eC_4}
$ 
for all $e \geq 1$, all odd primes $p$ and all $\epsilon$;
\item[(iv)] for all odd primes $p$ and all $\epsilon \in \{\pm 1\}$ we have 
$$
\left|h(p, \epsilon) - \frac{h(p)}{2}\right| \leq \frac{C_5}{p^2}.
$$
\end{enumerate}
\end{lemma}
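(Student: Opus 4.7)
The plan splits into two independent pieces: the main counting estimate, which I would prove by Lipschitz's principle, and the four local density bounds (i)--(iv), which I would prove by elementary mod-$p$ algebraic geometry. For the main estimate, the joint conditions $a \mid P(\b t)$ and $\left(\frac{P(\b t)/a}{q_i}\right) = \epsilon_i$ carve out a union of residue classes of $\b t$ modulo $N := a q_1 \cdots q_r$, and by the very definition of $h(a, \bm{\epsilon})$ the number of admissible classes is exactly $h(a, \bm{\epsilon}) N^n$. Since $q_1 \cdots q_r \leq \mathrm{rad}(a) \leq a$, we have $N \leq a^2 \leq B^{2\theta}$. Applying Lipschitz's principle to each class in the box $[-B, B]^n$ yields
\[
\#\{\b t \in \Z^n \cap [-B, B]^n : \text{conditions hold}\} = h(a, \bm{\epsilon})(2B)^n + O\!\left(h(a, \bm{\epsilon})\, N\, B^{n-1}\right) = h(a, \bm{\epsilon})(2B)^n + O(B^{n-1+2\theta}),
\]
which is bounded by $C B^{n-\theta}$ for any $\theta \leq 1/3$.

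\textbf{Local bounds (i)--(iii).} For (i), the Schwartz--Zippel estimate gives at most $(\deg P) \cdot p^{n-1}$ zeros of $P$ in $\F_p^n$, so $h(p) \leq (\deg P)/p$. For (ii) I would split the odd primes into good primes ($p \nmid \operatorname{disc}(P)$) and the finite set of bad primes: at good primes $V(P) \bmod p$ is smooth, so Hensel's lemma gives $|V(P)(\Z/p^e\Z)| \leq (\deg P) \cdot p^{(n-1)e}$, hence $h(p^e) \leq (\deg P)/p^e \leq (\deg P)/p^2$ for $e \geq 2$; the finitely many bad primes are absorbed into an enlarged $C_2$. For (iii) I would invoke Greenberg's dimensional estimate (equivalently Denef's rationality of the $p$-adic Poincar\'e series, applicable since $P$ is squarefree): for any prime $p$ there exists $c_p$ with $|V(P)(\Z/p^e\Z)| \leq c_p \, p^{(n-1)e}$ uniformly in $e \geq 1$; at good primes $c_p \leq \deg P$, and at the finitely many bad primes each $c_p$ is finite, so setting $C_3 := \max_p c_p$ and $C_4 := 1$ yields $h(p^e) \leq C_3/p^{eC_4}$.

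\textbf{Local bound (iv).} Writing $\b t = \b s + p \b u$ with $\b s, \b u \in (\Z/p\Z)^n$, the condition $p \mid P(\b t)$ becomes $p \mid P(\b s)$ and is independent of $\b u$, while
\[
P(\b t)/p \equiv P(\b s)/p + \b u \cdot \nabla P(\b s) \pmod{p}.
\]
For nonsingular zeros $\b s$ of $P \bmod p$, the right-hand side equidistributes over $\Z/p\Z$ as $\b u$ varies, so the contributions of such $\b s$ to $h(p, +1)$ and $h(p, -1)$ agree up to the elementary $(p-1)/2$ versus $p/2$ discrepancy of size $O(1/p^2)$. Singular zeros $\b s$ (those with $\nabla P(\b s) \equiv \mathbf{0} \bmod p$) form at most $O(p^{n-2})$ many points at good primes, since for squarefree $P \bmod p$ the singular locus of $V(P)$ has codimension $\geq 2$; these contribute a further $O(1/p^2)$ to $|h(p, \pm 1) - h(p)/2|$. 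Finitely many bad primes are absorbed into $C_5$.

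\textbf{Main obstacle.} Essentially everything is routine, except the bad-prime portion of (iii): at singular points Hensel's lemma fails, and a nontrivial input is required to control $|V(P)(\Z/p^e\Z)|$ uniformly in $e$. Invoking Greenberg's theorem or Denef's rationality bypasses this cleanly by implicitly using resolution of singularities; a more elementary alternative would be to stratify $\Z_p^n$ by the $p$-adic valuations of $\nabla P$ and iterate Hensel on each stratum, which is feasible but considerably more laborious.
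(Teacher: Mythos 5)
Your main counting estimate and items (i), (iv) follow essentially the same route as the paper: the Lipschitz/box-covering argument modulo $N = aq_1\cdots q_r \leq a^2 \leq B^{2\theta}$ for the first display, Lang--Weil for (i), and a Hensel/Taylor expansion combined with a Lang--Weil estimate on the codimension-$\geq 2$ singular locus for (iv). The genuine gap is in your treatment of (ii) and (iii), and both errors stem from the same misconception: a separable (i.e.\ squarefree) $P$ need not define a smooth hypersurface, even over $\overline{\Q}$, so in general there is no finite set of ``bad primes'' outside of which $V(P)\bmod p$ is smooth.

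Concretely, take $P(t_1,t_2) = t_1^2 - t_2^3$, which is irreducible and hence separable. The cusp at the origin is a $\Q$-rational singular point of $V(P)$, so $V(P)\bmod p$ is singular for \emph{every} prime $p$; there are no good primes in your sense. For (ii) the conclusion is still salvageable: the singular locus $S$ of $V(P)$ has codimension $\geq 2$ in $\mathbb{A}^n$ (this uses separability), so the contribution to $h(p^e)$ from residues reducing into $S(\FF_p)$ is $O(1/p^2)$ by Lang--Weil, while the contribution from smooth $\FF_p$-points lifts uniquely at each level and is $O(1/p^e)\leq O(1/p^2)$ for $e\geq 2$. You would need to restructure your argument along these lines; the paper simply cites [CKPS, Lemma~2.6] for this.

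For (iii), however, your assertion that Greenberg's theorem (or Denef rationality) yields $|V(P)(\Z/p^e\Z)|\leq c_p\,p^{(n-1)e}$ uniformly in $e$ --- equivalently, that $C_4 = 1$ is admissible --- is false. For $P = t_1^2 - t_2^3$ and $e = 6$, the ``deeply singular'' box $\{v_p(t_1)\geq 3,\ v_p(t_2)\geq 2\}\subseteq (\Z/p^6\Z)^2$ lies entirely inside $V(P)(\Z/p^6\Z)$ and contributes $p^3\cdot p^4 = p^7$ solutions, which exceeds $c_p\,p^{(n-1)e} = c_p\,p^6$ once $p > c_p$; more generally $h(p^e)\asymp_p p^{-5e/6}$ for every odd $p$, so necessarily $C_4\leq 5/6 < 1$ here. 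What Greenberg's theorem (or Igusa's rationality of the local zeta function) actually supplies is $h(p^e)\ll_p p^{-eC_4}$ for \emph{some} $C_4 = C_4(P)>0$ --- governed by the log-canonical threshold at the singular locus and possibly $<1$ --- which is precisely the shape of bound in the lemma, and what the paper imports from [CKPS2, Lemma~2.8]. Your ``main obstacle'' paragraph correctly flags the singular-lifting issue, but the body of your (iii) argument then asserts the wrong exponent.
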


\begin{proof}
The first part of the lemma follows upon fixing congruence classes for the variables $t_1, \dots, t_n$ modulo $a \cdot q_1 \cdot \ldots \cdot q_r$ and covering the cube $\max_i |t_i| \leq B$ with boxes.

For the second part of the lemma, we always have $h(p^e, \epsilon) \leq h(p^e)$. Bound $(i)$ follows from the Lang--Weil \cite{LW} bound. Bound $(ii)$ and $(iii)$ follow from respectively \cite[Lemma 2.6]{CKPS} and \cite[Lemma 2.8]{CKPS2}. It remains to prove bound $(iv)$. Note that we may assume without loss of generality that $p$ is larger than any given constant $C'$ depending only on $P$. In particular, by taking $C'$ large enough, we ensure that $p$ is odd and that the reduction of $P$ in $\mathbb{F}_p[t_1, \dots, t_n]$ remains separable. 

We define $\mathcal{Z}(P)$ to be the set of $(c_1, \dots, c_n)$ satisfying $P(c_1, \dots, c_n) \equiv 0 \bmod p$, and we define $\mathrm{Hen}(P)$ to be the subset of $\mathcal{Z}(P)$ for which there exists some $i$ such that
\[
\frac{\partial P}{\partial t_i} (c_1, \dots, c_n) \not \equiv 0 \bmod p.
\]
Since $P$ is separable, the system
\[
P(c_1, \dots, c_n) \equiv 0 \bmod p, \ \frac{\partial P}{\partial t_1} (c_1, \dots, c_n) \equiv 0 \bmod p, \ \dots, \ \frac{\partial P}{\partial t_n} (c_1, \dots, c_n) \equiv 0 \bmod p
\]
has codimension at least $2$. Appealing to the Lang--Weil \cite{LW} bounds, we may therefore bound the contribution from $\mathcal{Z}(P) - \mathrm{Hen}(P)$. For the points in $\mathrm{Hen}(P)$, we write every element of $\Z/p^2\Z$ as $c_i + d_ip$ with $0 \leq c_i, d_i \leq p - 1$. Using Taylor expansion around $(c_1, \dots, c_n)$ as in the proof of Hensel's lemma demonstrates the validity of
$$
P(c_1 + d_1p, \dots, c_n + d_np) \equiv P(c_1, \dots, c_n) + p \cdot \left(\sum_{i = 1}^n d_i \cdot \frac{\partial P}{\partial t_1} (c_1, \dots, c_n)\right) \bmod{p^2}.
$$
Therefore given any point $(c_1, \dots, c_n) \in \mathrm{Hen}(P)$, exactly $\frac{p^{n - 1}(p - 1)}{2}$ lifts will contribute to $h(p, \epsilon)$. Using the bound $(i)$, this readily gives the lemma.
\end{proof}

\section{Sieving}
\label{sNT}
This section adapts previous work of Nair--Tenenbaum \cite{NT} and Wolke \cite{Wolke}, which significantly strengthened and generalized old work of Erd\H{o}s \cite{Erdos}, Shiu~\cite{Shiu} and Nair~\cite{nair}. Our previous related work in this direction~\cite{CKPS} is not flexible enough; see Remark~\ref{rem:unfairpkoln}.

\subsection{Main sieve argument} 
We start by introducing the sequences to which our main sieve theorem applies.

\begin{definition}
\label{dClasses}
Let $\kappa, \lambda, K > 0$, $B \geq 3$ be real numbers. We say that a multiplicative function $h: \Z_{\geq 1 } \rightarrow [0 , \infty)$ belongs to the class $\mathcal{D}(\kappa, \lambda, B, K)$ if
\begin{itemize}
\item for all $B < w < z$ we have
\begin{align}
\label{eFSieveAss}
\prod_{w \leq p < z} (1 - h(p))^{-1} \leq \left(\frac{\log z}{\log w}\right)^\kappa \left(1 + \frac{K}{\log w}\right),
\end{align}
\item for every prime $p > B$ and $e \in \Z_{\geq 1}$
\begin{align}
\label{ehOnPrimes}
h(p^e) \leq \frac{B}{p},
\end{align}
\item for every prime $p$ and $e \in \Z_{\geq 1}$
\begin{align}
\label{ehDecay}
h(p^e) \leq B p^{-e \lambda}.
\end{align}
\end{itemize} Also pick for each prime power $p^e$ a partition $\mathcal{P}(p^e) = \{\mathcal{A}_1, \dots, \mathcal{A}_k\}$ of $\Z/p^e\Z$. We demand that for all $i$ we have 
$
\mathcal{A}_i \subseteq (\Z/p^e\Z)^\ast$ or $\mathcal{A}_i \subseteq \Z/p^e\Z - (\Z/p^e\Z)^\ast.
$
This naturally gives partitions $\mathcal{P}(m)$ of $\Z/m\Z$ for each integer $m$ by taking the product sets of the resulting partitions over the prime powers exactly dividing $m$. We call $ \overline{\c P}$ the collection of partitions $\c P(m)$ as $m$ varies.

Let $(a_n)_{n \geq 1}$ be a sequence of strictly positive integers and let $(w_n)_{n \geq 1}$ be a sequence of non-negative real numbers. Fix positive constants $\alpha$ and $\theta$. We say that $(a_n)_{n \geq 1}$ belongs to the class $\mathcal{C}(\alpha, \theta, \kappa, \lambda, B, K)$ with weights $(w_n)_{n \geq 1}$ and size function $M: [1, \infty) \rightarrow [1, \infty)$ if
\begin{itemize}
\item the function $M$ is non-decreasing and goes to infinity,
\item we have $a_n \leq \alpha M(n)^\alpha$,
\item we require that there exists some non-negative function $h(\cdot, \cdot)$ such that
\begin{align}
\label{eLevel}
\left|\sum_{\substack{ n \leq X, a_n/m \in \mathcal{A} \\ a_n \equiv 0 \bmod m}} w_n
-h(m, \mathcal{A}) M(X) 
\right| \leq K M(X)^{1 - \theta}
\end{align}
uniformly for all $X \geq 1$, all $m \leq M(X)^\theta$ and all $\mathcal{A} \in \mathcal{P}(m)$,
\item the function $h(m) = \sum_{\mathcal{A} \in \mathcal{P}(m)} h(m, \mathcal{A})$ lies in the class $\mathcal{D}(\kappa, \lambda, B, K)$, and moreover
\begin{align}
\label{eLevel23}
h(m, \mathcal{A}) h(n, \mathcal{B}) = h(mn, \mathcal{A} \times \mathcal{B})
\end{align}
for all coprime $m$ and $n$ and all $\mathcal{A} \in \mathcal{P}(m)$, $\mathcal{B} \in \mathcal{P}(n)$,
\item defining
$$
H(d) := 
\begin{cases}
\sum_{\mathcal{A} \in \mathcal{P}(d)} g(d, \mathcal{A}) \frac{h(d, \mathcal{A})}{h(d)}, &\textup{if } h(d) \neq 0, \\
0, &\textup{if } h(d) = 0,
\end{cases}
$$
we assume that for every $\epsilon > 0$, there exists $C_\epsilon > 0$ such that for all coprime $d_1, d_2$ we have 
\begin{align}
\label{eHSubm}
H(d_1 d_2) \leq H(d_1) \min(K^{\Omega(d_2)}, C_\epsilon d_2^\epsilon).
\end{align} 
\end{itemize}
\end{definition}

The reason for the rather general formulation with partitions in Definition \ref{dClasses} is that Lemma \ref{lh3Level} is only able to detect whether $n/m$ is a square, but not the precise class modulo $m$. It is highly plausible that one can directly get a level of distribution for $h_3 \equiv t \bmod m$ (see the discussion in \cite{BF} for example) but we do not know of such a result in the literature. In that case one could take all the partitions to be the one element subsets of $\Z/p^e\Z$. In any case, we believe that the flexibility allowed in Definition \ref{dClasses} may be valuable for future applications as well. 

We say that a function $g: \{(m, n) \in \Z_{\geq 1}^2 : \gcd(m, n) = 1\} \rightarrow[0 , \infty)$ is compatible with 
$ \overline{\c P}$
if
$$
g(m, n) = g(m, n')
$$
for all $m \in \Z_{\geq 1}$, all $\mathcal{A} \in \mathcal{P}(m)$ and all $n, n'$ coprime to $m$ satisfying $n \bmod m \in \mathcal{A}$ and $n' \bmod m \in \mathcal{A}$. This allows us to define $g(m, \mathcal{A}) := g(m, n)$ for any choice of $n \in \mathcal{A}$. It will be convenient to define $g(m, n) := 0$ if $m$ and $n$ are not coprime.

\begin{theorem}
\label{tNT}
Let $\alpha, \theta, \kappa, \lambda, K > 0$, $B \geq 3$ be real numbers. Fix $A > 1$ and fix a function $g: \{(m, n) \in \Z_{\geq 1}^2 : \gcd(m, n) = 1\} \rightarrow [0 , \infty)$ satisfying~\eqref{ePeriod}. Let $ \overline{\c P}$ be a collection of partitions constructed as above. Let $f: \Z_{\geq 1} \rightarrow [0 , \infty)$ be $(A, g)$-R\'edei majorized. Assume that $g$ is compatible with $ \overline{\c P}$.
We also assume that for every $\epsilon > 0$, there exists $C'_\epsilon > 0$ with
\begin{align}
\label{eGXbound}
\max_{1 \leq d \leq M(X)} \max_{\mathcal{A} \in \mathcal{P}(d)} g(d, \mathcal{A}) \leq C'_\epsilon M(X)^\epsilon.
\end{align}
Then there exists $C > 0$ such that for all sequences $(a_n)_{n \geq 1}$ belonging to the class $\mathcal{C}(\alpha, \theta, \kappa, \lambda, B, K)$ with weights $(w_n)_{n \geq 1}$ and size function $M$, and all $X \geq 1$ satisfying $M(X) \geq C$ we have 
$$
\sum_{1 \leq n \leq X} w_n f(a_n) \leq C M(X) \prod_{B < p \leq M(X)} (1 - h(p)) \sum_{1 \leq d \leq M(X)} \sum_{\mathcal{A} \in \mathcal{P}(d)} g(d, \mathcal{A}) h(d, \mathcal{A}).
$$
\end{theorem}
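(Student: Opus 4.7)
My approach adapts the Nair--Tenenbaum--Wolke sieve to the R\'edei majorized setting. Pick a parameter $\eta \in (0, \theta)$ and set $Y := M(X)^\eta$. For each $n$, write $a_n = d_n e_n$ with $d_n := \prod_{p^v \| a_n,\, p \leq Y} p^v$ the $Y$-smooth part of $a_n$, so that $\gcd(d_n, e_n) = 1$. The R\'edei majorization \eqref{efWSub} together with the periodicity \eqref{ePeriod} and compatibility of $g$ with $\overline{\mathcal{P}}$ yields
\[
f(a_n) \leq g(d_n, \mathcal{A}_n) \min\bigl(A^{\Omega(e_n)}, C_\epsilon e_n^\epsilon\bigr),
\]
where $\mathcal{A}_n \in \mathcal{P}(d_n)$ denotes the partition class of $e_n \bmod{d_n}$. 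I then split $\sum_n w_n f(a_n)$ according to whether $d_n \leq M(X)^\theta$, using the $A^{\Omega(e_n)}$ branch on the main range and the $C_\epsilon e_n^\epsilon$ branch on the tail.

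For the main range $d_n \leq M(X)^\theta$, swapping the order of summation gives
\[
\sum_{\substack{d \leq M(X)^\theta \\ P^+(d) \leq Y}} \sum_{\mathcal{A} \in \mathcal{P}(d)} g(d, \mathcal{A}) \sum_{\substack{n \leq X,\ d \mid a_n \\ a_n/d \bmod{d} \in \mathcal{A} \\ P^-(a_n/d) > Y}} w_n A^{\Omega(a_n/d)}.
\]
I would estimate the innermost sum by a Brun--Hooley (or Selberg) upper-bound sieve: dominate $A^{\Omega(\cdot)}$ on $Y$-rough integers by a truncated divisor sum, apply \eqref{eLevel} to evaluate each sifted sum, and factor the densities via the multiplicativity \eqref{eLevel23} as $h(d, \mathcal{A}) \cdot h(m, \mathcal{B})$ for auxiliary sieve moduli $m$ coprime to $d$. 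Mertens-type manipulations permitted by \eqref{eFSieveAss} and \eqref{ehOnPrimes} then produce the sieve density $\prod_{Y < p \leq M(X)} (1 - h(p))$ multiplied by $M(X) h(d, \mathcal{A})$. Finally, the submultiplicativity \eqref{eHSubm} of $H(d) = \sum_{\mathcal{A}} g(d, \mathcal{A}) h(d, \mathcal{A})/h(d)$ is what lets me extend the product downward to primes $p > B$, matching the target factor $\prod_{B < p \leq M(X)} (1 - h(p))$ at the cost of an absolute constant absorbed into $C$.

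For the tail $d_n > M(X)^\theta$, I use the weaker bound $g(d_n, \mathcal{A}_n) C_\epsilon e_n^\epsilon$ together with \eqref{eGXbound}. Since $a_n \leq \alpha M(X)^\alpha$, $d_n$ is a $Y$-smooth integer of size at least $M(X)^\theta$, and a Rankin estimate on smooth numbers with $Y = M(X)^\eta$ taken small enough relative to $\theta$ saves an arbitrary power of $M(X)$, absorbing all $\epsilon$-losses from \eqref{efWSub} and \eqref{eGXbound}. The main obstacle is synchronizing the sieve step with the level-of-distribution step: the sieve truncation level must combine with $d$ to remain below $M(X)^\theta$, and the partition structure $\overline{\mathcal{P}}$ must be carried through every bound. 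The submultiplicativity \eqref{eHSubm} is the crucial algebraic input that allows the final sum over $d$ to Euler-factorize into the clean form $\sum_{d \leq M(X)} \sum_\mathcal{A} g(d, \mathcal{A}) h(d, \mathcal{A})$ in the conclusion; without it one would incur a log-loss per prime and fail to recover the stated bound.
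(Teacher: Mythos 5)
Your high-level outline — sieve on the small prime factors, use the level of distribution on a smooth divisor, factor $g$ through the partitions, and invoke \eqref{eHSubm} to Euler-factorize — tracks the paper's case (i) faithfully. But the paper's factorization $a_n=c_n b_n$ is a \emph{greedy prefix} (take the largest initial product of prime powers $\leq Z := M(X)^{\eta_1}$), so $c_n \leq Z$ by construction, and $Z$ is a \emph{small} power of $M(X)$. Your factorization $a_n = d_n e_n$ with $d_n$ the full $Y$-smooth part has no a priori bound on $d_n$; this is the root of two genuine gaps.

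\textbf{First gap: the main range is too broad.} You allow $d_n$ up to $M(X)^\theta$. The level-of-distribution constraint forces the sieve modulus $m$ to satisfy $d_n m \leq M(X)^\theta$, so the sieve level is $M(X)^\theta/d_n$, which degenerates to $O(1)$ when $d_n$ is close to $M(X)^\theta$. The Fundamental Lemma error $e^{-\sigma}$ with $\sigma=\log(\text{level})/\log Y$ then tends to $1$ and the sieve saves nothing. The paper avoids this by never letting the fixed divisor exceed $Z=M(X)^{\eta_1}$ with $\eta_1\leq\theta/2$, reserving ample room for the sieve weights.

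\textbf{Second gap: Rankin on smooth numbers does not save a power of $M(X)$.} You propose to bound the tail $d_n>M(X)^\theta$ by a Rankin estimate on $Y$-smooth integers. Since $Y=M(X)^\eta$ and the threshold $M(X)^\theta$ are both fixed powers of $M(X)$, the optimal Rankin shift is $\sigma\asymp\log\log M(X)/\log M(X)$ and the savings is only $\exp(-c\,\theta/\eta\cdot\log\log M(X))=(\log M(X))^{-c\theta/\eta}$ — a logarithmic saving, not a power saving. This is wiped out by the $C_\epsilon e_n^\epsilon \ll M(X)^{\alpha\epsilon}$ loss from \eqref{efWSub} and the $C'_\epsilon M(X)^\epsilon$ loss from \eqref{eGXbound}. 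The paper instead decomposes the tail into three sub-cases and treats each with a dedicated mechanism: (ii) when the greedy prefix stopped early because of a large prime power, one extracts an intermediate modulus $q^{f_q}\leq M(X)^\theta$ and uses the decay \eqref{ehDecay} of $h$ on high prime powers to get a genuine power saving; (iii) when $c_n\in(Z^{1/2},Z]$ and $P^-(b_n)\leq(\log Z)\log\log Z$, one uses a sharp smooth-number estimate ([CKPS, Lemma 2.1]) at a sub-polynomial smoothness threshold (not $Y=M(X)^\eta$); and (iv) in the intermediate range one performs a delicate Dickman-type tradeoff between the growth $A^{\Omega(b_n)}\ll A^{3s\alpha/\eta_1}$ and the density of $Z^{1/s}$-smooth integers ([CKPS, Lemma 2.6]), and only then does the sum over the smoothness parameter $s$ converge. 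Your proposal does not account for any of these, and the prime-power case (ii) and the balance in case (iv) cannot be replicated by a single Rankin bound with a fixed smoothness cutoff. Without them, the proof does not close.
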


\begin{remark}
Tracing through the proof, one finds that $C$ may be chosen to depend only on $A, \alpha, \theta, \kappa, \lambda, B, K, g(1, 0)$ and the constants
$C_\epsilon,C'_\epsilon$ in \eqref{eHSubm} and \eqref{eGXbound} for some $\epsilon > 0$ depending only on the parameters $A, \alpha, \theta, \kappa, \lambda, B, K$.
\end{remark}

\begin{remark}\label{rem:unfairpkoln}Theorem 1.9 in~\cite{CKPS} does not cover Theorem~\ref{tNT}
as it has no flexibility regarding the partitions $\c P$
and property~\eqref{eHSubm}.
\end{remark}

\begin{proof}
We let $ \eta_1, \eta_2$ be positive constants that we shall choose later in terms of $\alpha, \theta, \kappa, \lambda, B, K$ and take $Z := M(X)^{\eta_1}$. Factor $a_n = p_1^{e_1} \cdots p_r^{e_r}$ with $r \geq 0$ primes $p_1 < \dots < p_r$ and exponents $e_i \geq 1$. Let $i$ be the largest index 
with $p_1^{e_1} \cdots p_i^{e_i} \leq Z$ and set $c_n := p_1^{e_1} \cdots p_i^{e_i}, b_n := a_n/c_n$. Thus, 
\begin{equation}\label{ebncngcd}
 P^+(c_n) < P^-(b_n), 
 \gcd(b_n, c_n) = 1 \textrm{ and } 
 c_n \leq Z. 
\end{equation}
The following cases are mutually exclusive and cover all scenarios:
\begin{itemize}
\item[(i)] $P^-(b_n) \geq Z^{\eta_2}$,
\item[(ii)] $P^-(b_n) < Z^{\eta_2}$ and $c_n \leq Z^{1/2}$,
\item[(iii)] $P^-(b_n) \leq (\log Z)( \log \log Z)$ and $Z^{1/2} < c_n \leq Z$,
\item[(iv)] $(\log Z) (\log \log Z)< P^-(b_n) < Z^{\eta_2}$ and $Z^{1/2} < c_n \leq Z$.
\end{itemize}
The constants $C_1, C_2, \dots$ appearing in the proof will depend at most on $\alpha, \theta, \kappa, \lambda, B, K$ and $\eta_1, \eta_2$.

\subsection*{Case (i)}
The plan is to show that $b_n$ has a bounded number of prime divisors. Once we prove this, we will be able to replace $f(a_n)$ by $g(c_n, b_n)$ while only losing a constant. We will then be able to employ the Brun sieve to bound the number of $c_n$ arising from some $a_n$ in this way.

Since $a_n \leq \alpha M(n)^\alpha \leq \alpha M(X)^\alpha$ for $n \leq X$ and $P^-(b_n) \geq Z^{\eta_2}$, there exists a constant $C_1 > 0$ such that $\Omega(b_n) \leq C_1$ for $M(X) \geq C_1$. Using~\eqref{ebncngcd} and that $f$ is $(A, g)$-R\'edei majorized we deduce the inequality $f(a_n) = f(b_n c_n) \leq A^{C_1} g(c_n, b_n)$. We set $d := c_n$, so that $d \leq Z$ and $d \mid a_n$. Because we are in case (i), it follows that $a_n/d$ is coprime to every prime in the interval $[2, Z^{\eta_2})$. In particular, $a_n$ is coprime to every prime in the interval $(B, Z^{\eta_2})$ not dividing $d$. Put
$$
P := \prod_{\substack{p \in (B, Z^{\eta_2}) \\ p \nmid d}} p.
$$ 
Hence, the contribution of case (i) towards the sum over $n$ in Theorem~\ref{tNT} is at most 
\begin{align}
\label{eCasei1}
A^{C_1} \sum_{d \leq Z} \sum_{\substack{1 \leq n \leq X, d \mid a_n \\ \gcd(P, a_n) = 1}} w_n g(d, b_n) = A^{C_1} \sum_{d \leq Z} \sum_{\mathcal{A} \in \mathcal{P}(d)} g(d, \mathcal{A}) \sum_{\substack{1 \leq n \leq X, d \mid a_n \\ \gcd(P, a_n) = 1}} w_n \mathds 1_{\c A}(a_n/d),
\end{align}
since $g$ is compatible with $\overline{\mathcal{P}}$. Taking $y = Z$ in the Fundamental lemma of Sieve Theory~\cite[Lemma 6.3]{iwaniec}, there exists a sequence of real numbers $(\lambda_m^+)$ depending only on $\kappa$ such that
\begin{alignat*}{2}
&\lambda_1^+ = 1, && \hspace{1cm} |\lambda_m^+| \leq 1 \ \ \ \quad \text{ if } 1 < m < Z, \\
&\lambda_m^+ =0 \quad \text{ if } m \geq Z, && \hspace{1cm} 0 \leq \sum_{m \mid a} \lambda_m^+ \ \ \ \text{ for } a > 1.
\end{alignat*}
Moreover, for any multiplicative function $f(m)$ with $0 \leq f(p) < 1$ satisfying
\begin{align}
\label{eFLSieveAssu}
\prod_{w \leq p < z} (1 - f(p))^{-1} \leq \left(\frac{\log z}{\log w}\right)^\kappa \left(1 + \frac{K}{\log w}\right)
\end{align}
for all $2 \leq w < z \leq Z$, we have
\begin{align}
\label{eFundamentalLemma}
\sum_{m \mid P(z)} \lambda_m^+ f(m) = \left(1 + O\left(e^{-\sigma} \left(1 + \frac{K}{\log z}\right)^{10}\right)\right) \prod_{p \leq z} (1 - f(p)),
\end{align}
where $P(z)$ is the product of all primes $p \leq z$ and $\sigma = \log Z/\log z$.

We continue to upper bound the inner sum over $n$ in the right-hand side of~\eqref{eCasei1} by 
\begin{align}
\label{eCasei2}
\sum_{\substack{1 \leq n \leq X, d \mid a_n \\ a_n/d \in \mathcal{A}}} w_n \sum_{\substack{m \mid a_n \\ m \mid P}} \mu(m) \leq \sum_{\substack{1 \leq n \leq X, d \mid a_n \\ a_n/d \in \mathcal{A}}} w_n \sum_{\substack{m \mid a_n \\ m \mid P}} \lambda_m^+ = \sum_{m \mid P} \lambda_m^+ \sum_{\substack{1 \leq n \leq X, dm \mid a_n \\ a_n/d \in \mathcal{A}}} w_n. 
\end{align}
Using the partitions $\c P(m)$ and~\eqref{eLevel} the right-hand side becomes 
$$ 
\sum_{m \mid P} \lambda_m^+ \sum_{\c B\in \c P(m)} \left(h(d m, \mathcal{A}\times \c B) M(X) + O( M(X)^{1 - \theta})\right) 
$$ 
because we can ensure $dm \leq Z^2 \leq M(X)^\theta$ by using that $\lambda_m^+$ is supported on $[1, Z)$ and taking $\eta_1\leq \theta/2$. Exploiting $\sum_{\c B\in \c P(m)} |\c B| = m\leq Z$ for the error term and~\eqref{eLevel23} for the main term we get 
\begin{align}
\label{eCasei3}
h(d, \mathcal{A}) M(X) \sum_{m \mid P} \lambda_m^+ h(m) + O(Z^2 M(X)^{1 - \theta}).
\end{align} 
By~\eqref{eFundamentalLemma} with $f(p) = h(p) \mathds{1}_{p > B} \mathds{1}_{p \nmid d}$, there exists $C_2 > 0$ such that
\begin{align}
\label{eCasei4}
\sum_{m \mid P} \lambda_m^+ h(m) \leq C_2 \prod_{\substack{B < p < Z^{\eta_2} \\ p \nmid d}} (1 - h(p)).
\end{align}
The conditions \eqref{eFLSieveAssu} and $0 \leq f(p) < 1$ follow immediately from assumptions~\eqref{eFSieveAss} 
and~\eqref{ehOnPrimes}. Furthermore, we may extend the product in equation \eqref{eCasei4} to all $B < p \leq M(X)$ at the expense of losing a constant due to~ \eqref{eFSieveAss}. Gathering \eqref{eCasei1}, \eqref{eCasei2}, \eqref{eCasei3} and \eqref{eCasei4}, we conclude 
\begin{align*}
\sum_{\substack{n \leq X \\ \text{case (i)}}} w_n f(a_n) &\leq A^{C_3} \sum_{d \leq Z} \sum_{\mathcal{A} \in \mathcal{P}(d)} g(d, \mathcal{A}) 
\Big(h(d, \mathcal{A}) M(X) \prod_{\substack{B < p \leq M(X) \\ p \nmid d}} (1 - h(p)) + Z^2 M(X)^{1 - \theta}\Big) 
\\&\leq A^{C_3} M(X) \sum_{d \leq Z} \sum_{\mathcal{A} \in \mathcal{P}(d)} \hspace{-0.175cm} g(d, \mathcal{A}) h(d, \mathcal{A}) \hspace{-0.4cm} \prod_{\substack{B < p \leq M(X) \\ p \nmid d}} \hspace{-0.4cm} (1 - h(p)) + A^{C_3} C_{\theta/2} 
Z^4 M(X)^{1 - \theta/2 },
\end{align*} for some $C_3 > 0$ by~\eqref{eGXbound} with $\epsilon = \theta/2$. We rewrite the first term as
$$
\sum_{d \leq Z} \sum_{\mathcal{A} \in \mathcal{P}(d)} g(d, \mathcal{A}) h(d, \mathcal{A}) \prod_{\substack{B < p \leq M(X)\\ p \nmid d}} (1 - h(p)) = \prod_{B < p \leq M(X)} (1 - h(p)) \sum_{d \leq Z} h(d) H(d) \prod_{\substack{ p \mid d \\ p>B }} (1 - h(p))^{-1}.
$$
We now apply \cite[Lemma 2.7]{CKPS} with the choices $h = F$ and $G = H$. The conditions on $F$ in that lemma are satisfied thanks to equations \eqref{ehOnPrimes} and \eqref{ehDecay}. The condition on $G$ in that lemma is satisfied thanks to equation \eqref{eHSubm}. By positivity we may also extend the sum over $d \leq Z$ to all $d \leq M(X)$. These manipulations transform our final upper bound for case (i) to
\begin{align}
\label{eConclusioni}
\ll M(X) \prod_{B < p \leq M(X)} (1 - h(p)) \sum_{1 \leq d \leq M(X)} \sum_{\mathcal{A} \in \mathcal{P}(d)} g(d, \mathcal{A}) h(d, \mathcal{A}) + Z^4 M(X)^{1 - \theta/2}. 
\end{align} 
 
\subsection*{Case (ii)}
It will be shown that the exponent of $P^-(b_n)$ in the prime factorization of $b_n$ is large and that can only happen 
very rarely. Let $q := P^-(b_n)$. The definition of case (ii) and of $b_n$ shows that $Z < c_n q^{v_q(b_n)}$ and $c_n \leq Z^{1/2}$, thus, $ Z^{1/2} < q^{v_q(b_n)}$. We let $f_q$ be the largest positive integer such that $q^{f_q} \leq M(X)^\theta$ and $f_q \leq v_q(b_n)$ . Since we already assumed $2\eta_1 \leq \theta$, we have
\begin{align}
\label{eqfqControl}
q^{f_q} > \frac{M(X)^{\min(\theta, \eta_1/2)}}{q} = \frac{M(X)^{\eta_1/2}}{q} > \frac{M(X)^{\eta_1/2}}{Z^{\eta_2}} = M(X)^{\frac{\eta_1}{2} - \eta_1 \eta_2}
\end{align}
by the assumption $q < Z^{\eta_2}$ of case (ii). Therefore, we have found an integer $f_q$ for each prime $q < Z^{\eta_2}$ with the properties $q^{f_q} \mid q^{v_q(b_n)} \mid b_n \mid a_n$, $q^{f_q} \leq M(X)^\theta$ and \eqref{eqfqControl}. We now bring the R\'edei majorant of $f$ into play by taking the second term in the minimum of \eqref{efWSub}. Combining this with~\eqref{eGXbound} and the bound $a_n \leq \alpha M(n)^\alpha \leq \alpha M(X)^\alpha$ leads us to the estimate
$$
\sum_{\substack{1 \leq n \leq X \\ \text{case (ii)}}} w_n f(a_n) \leq C_4(\gamma, \epsilon) M(X)^{\gamma + \epsilon}
\sum_{\substack{1 \leq n \leq X \\ \text{case (ii)}}} w_n \leq C_4(\gamma, \epsilon) M(X)^{\gamma + \epsilon}
\sum_{q < Z^{\eta_2}} \sum_{\substack{1 \leq n \leq X \\ q^{f_q} \mid a_n }} w_n,
$$
where $\gamma$ and $\epsilon$ will be chosen later in terms of $\theta, \lambda, \eta_1, \eta_2$ at the end of case (ii). This latter sum can be estimated by alluding to our level of distribution assumption \eqref{eLevel} and by using the arguments involving the partitions $\c B \in \c P(m)$ proving~\eqref{eCasei3}. The resulting bound is 
$$ 
\ll \sum_{q < Z^{\eta_2}} \left(h(q^{f_q}) M(X) + Z M(X)^{1 - \theta}\right) \leq M(X) \sum_{q < Z^{\eta_2}} h(q^{f_q}) + Z^{1 + \eta_2} M(X)^{1 - \theta}.
$$
Finally, we employ \eqref{ehDecay}, \eqref{eqfqControl} and the construction of $f_q$ to bound
$$
\sum_{q < Z^{\eta_2}} h(q^{f_q}) \leq B \sum_{q < Z^{\eta_2}} q^{-f_q \lambda} < B M(X)^{-\lambda(\frac{\eta_1}{2} - \eta_1 \eta_2)} \sum_{q < Z^{\eta_2}} 1 \leq B M(X)^{\frac{-\lambda \eta_1}{2} + \lambda \eta_1 \eta_2 + \eta_1 \eta_2}.
$$
Picking $\eta_2$ and $\gamma$ in such a way that
\begin{align}
\label{eParChoices}
1 + \eta_2 < \frac{\theta}{\eta_1}, \quad \eta_2 \leq \frac{\lambda}{4(1 + \lambda)}, \quad \gamma := \min\left(\frac{\lambda \eta_1}{8}, \frac{\theta - \eta_1 \eta_2 - \eta_1}{2}\right)
\end{align}
leaves us with the estimate 
$$
\sum_{\substack{1 \leq n \leq X \\ n \text{ in case (ii)}}} w_n f(a_n) \leq C_5(\epsilon) M(X)^{\max\left(1 - \frac{(\theta \eta_1 -\eta_2- \eta_1)}{2}+\epsilon, 1 - \frac{\lambda \eta_1}{8}+\epsilon\right)}.
$$
In particular, we can now fix $\epsilon>0$ that depends only on $\theta,\eta_i$ and $\lambda$ so that 
\begin{align}
\label{eConclusionii}
\sum_{\substack{1 \leq n \leq X \\ n \text{ in case (ii)}}} w_n f(a_n) \leq C_5 M(X)^{\max\left(1 - \frac{(\theta \eta_1 -\eta_2- \eta_1)}{4}, 1 - \frac{\lambda \eta_1}{16}\right)}.
\end{align}

\subsection*{Case (iii)}
Since $P^+(c_n) < P^-(b_n) \leq (\log Z)( \log \log Z)$, all prime divisors of $c_n$ are unusually small; this will give a power saving error term. By~\eqref{efWSub} and~\eqref{eGXbound} we obtain for any $\epsilon > 0$
\begin{align*}
\sum_{\substack{1 \leq n \leq X \\ n \text{ in case (iii)}}} w_n f(a_n) &\leq C_6(\epsilon) M(X)^{\epsilon}
\sum_{\substack{1 \leq n \leq X \\ n \text{ in case (iii)}}} w_n \\
&\leq C_6(\epsilon) M(X)^{\epsilon} \sum_{\substack{Z^{1/2} < d \leq Z \\ P^+(d) \leq (\log Z) (\log \log Z)}} 
\sum_{\substack{1 \leq n \leq X \\ d\mid a_n }} w_n,
\end{align*}
where $d = c_n$. Using \eqref{eLevel} and the arguments involving the partitions $\c B \in \c P(m)$ proving~\eqref{eCasei3} we obtain the following upper bound for the sum over $d$:
$$
C_7 M(X) \sum_{\substack{Z^{1/2} < d \leq Z \\ P^+(d) \leq (\log Z)( \log \log Z)}} h(d) + C_7 Z^2 M(X)^{1 - \theta}.
$$ 
We estimate the new sum over $d$ by alluding to \cite[Lemma 2.1]{CKPS} with 
$$
F = h, \ c_0 = \max\left(B, \max_{p \leq B} p \cdot h(p)\right), \ c_1 = \frac{\log B}{\log 2}, \ c_2 = \lambda, \ x = Z, \ z= Z^{1/2},
$$ 
thus obtaining the bound $C_8 Z^{-\phi},$ where $\phi$ is a positive constant that depends on $\lambda$ and $B$. We can assume that $2\eta_1 < \theta$ and then choosing $\epsilon = \eta_1 \phi/2$ and $\gamma = \min(\eta_1 \phi/2, \theta - 2\eta_1) > 0$ we obtain
\begin{align}
\label{eConclusioniii}
\sum_{\substack{1 \leq n \leq X \\ n \text{ in case (iii)}}} w_n f(a_n) \leq C_9 M(X)^{1 - \gamma}.
\end{align}

\subsection*{Case (iv)}
Write $d = c_n$ so that $b_n = a_n/d$ and $d$ are coprime. Since $b_n\md d$ falls in some $\c A\in \c P(d)$ we use~\eqref{efWSub} to get $f(a_n) \leq g(d,\c A) A^{\Omega(b_n)}$. Hence, 
\begin{align}
\label{eOpeniv}
\sum_{\substack{1 \leq n \leq X \\ n \text{ in case (iv)}}} w_n f(a_n) \leq \sum_{Z^{1/2} < d \leq Z} \sum_{\mathcal{A} \in \mathcal{P}(d)} g(d, \mathcal{A}) \Osum_{\substack{1 \leq n \leq X, d \mid a_n \\ (\log Z) (\log \log Z)< P^-(a_n/d) < Z^{\eta_2}}} w_n A^{\Omega(a_n/d)},
\end{align}
where $\Osum$ is subject to the further conditions $\gcd(d, a_n/d) = 1$ and $a_n/d \in \mathcal{A}$. Define the integer $s$ so that $Z^{1/(s + 1)} < P^-(a_n/d) \leq Z^{1/s}$. Letting
$$
s_0 := \left \lfloor \frac{\log Z}{\log(\log Z \log \log Z)} \right \rfloor \leq \frac{\log Z}{\log \log Z}
$$ 
we infer $1 \leq s \leq s_0$ by the definition of case (iv). Further, $\eta_1 \Omega(a_n/d) \leq 3 s \alpha$ owing to 
\[
M(X)^{\frac{\eta_1 \Omega(a_n/d)}{2s}} \leq M(X)^{\frac{\eta_1 \Omega(a_n/d)}{s + 1}} = Z^{\Omega(a_n/d)/(s + 1)} < P^-(a_n/d)^{\Omega(a_n/d)} \leq a_n \leq \alpha M(X)^\alpha.
\]
Therefore,~\eqref{eOpeniv} is at most 
$$ 
\sum_{1 \leq s \leq s_0} A^{3 s \alpha \eta_1^{-1}} \sum_{\substack{Z^{1/2} < d \leq Z \\ P^+(d) < Z^{1/s}}} \sum_{\mathcal{A} \in \mathcal{P}(d)} g(d, \mathcal{A}) \Osum_{\substack{1 \leq n \leq X, d \mid a_n \\ Z^{1/(s + 1)} < P^-(a_n/d) \leq Z^{1/s}}} w_n.
$$ 
The condition $Z^{1/(s + 1)} < P^-(a_n/d)$ will be dealt via~\cite[Lemma 6.3]{iwaniec} with $y = Z$. Set 
$$
P_s := \prod_{\substack{p \in (B, Z^{1/(s + 1)}] \\ p \nmid d}} p.
$$
We obtain 
$$
\Osum_{\substack{1 \leq n \leq X, d \mid a_n 
 \\ Z^{1/(s + 1)} < P^-(a_n/d) \leq Z^{1/s}}} w_n
\leq \sum_{\substack{1 \leq n \leq X, d \mid a_n \\ \gcd(P_s, a_n/d) = 1, a_n/d \in \mathcal{A}}} w_n \leq \sum_{m \mid P_s} \lambda_m^+ \sum_{\substack{1 \leq n \leq X, dm \mid a_n \\ a_n/d \in \mathcal{A}}} w_n.
$$ 
Arguing as in the analogous step in case (i) we obtain the upper bound 
$$ 
C_{10} \bigg(h(d, \mathcal{A}) M(X) \prod_{\substack{B < p < Z^{1/(s + 1)} \\ p \nmid d}} (1 - h(p)) + Z^2 M(X)^{1 - \theta/2}\bigg).
$$ 
Now \eqref{eFSieveAss} allows us to extend the product over $p$ all the way up to $M(X)$ at the expense of an error of size $C_{11} (s + 1)^\kappa$. This shows that the main term in the last equation contributes 
\begin{equation}
\label{eSmoothnessSaving}
\ll \prod_{B < p \leq M(X)} (1 - h(p)) \sum_{1 \leq s \leq s_0} A^{3 s \alpha \eta_1^{-1}} (s + 1)^\kappa \sum_{\substack{Z^{1/2} < d \leq Z \\ P^+(d) < Z^{1/s}}} h(d) H(d)\prod_{B<p \mid d} (1 - h(p))^{-1},
\end{equation} 
where $H$ is as in Theorem~\ref{tNT}. For sufficiently large $X$, we apply \cite[Lemma 2.6]{CKPS} with $\Upsilon := Z^{1/2}$, $\Psi := Z^{1/s}$, $F := h$, $G := H$ and $\varpi = 6 \alpha \eta_1^{-1} \log(4A)$. Note that conditions on $F, G$ are satisfied thanks to \eqref{ehOnPrimes}, \eqref{ehDecay} and \eqref{eHSubm}. We then take $\beta_0 := 6 \alpha \eta_1^{-1} \log(4A)$ to obtain the estimate
$$
\sum_{\substack{Z^{1/2} < d \leq Z \\ P^+(d) < Z^{1/s}}} h(d) H(d) \prod_{B<p \mid d} (1 - h(p))^{-1} \ll (4A)^{-3 s \alpha \eta_1^{-1}} \sum_{d \leq Z} h(d) H(d) \prod_{B<p \mid d} (1 - h(p))^{-1}.
$$ 
This makes \eqref{eSmoothnessSaving} be 
$$
\ll \prod_{B < p \leq M(X)} (1 - h(p)) \sum_{1 \leq s \leq s_0} 4^{-3 s \alpha \eta_1^{-1}} (s + 1)^\kappa \sum_{d \leq Z} h(d) H(d) \prod_{B<p \mid d} (1 - h(p))^{-1}.
$$
The sum over $s$ converges, thus, by \cite[Lemma 2.7]{CKPS} we get 
\begin{equation}
\label{eConclusioniv}
\sum_{\substack{1 \leq n \leq X \\ \text{case (iv)}}} w_n f(a_n) \leq C_{12} \prod_{B < p \leq M(X)} (1 - h(p)) \sum_{d \leq Z} h(d) H(d) + C_{12} Z^4 M(X)^{1 - \theta/2}.
\end{equation}

\subsection*{Proof of Theorem~\ref{tNT} }
In case (i) and (iv) we assumed $Z^2 \leq M(X)^\theta$, while, in case (iii) we assumed \eqref{eParChoices}. Pick $\eta_1 > 0$ sufficiently small and then pick $\eta_2 > 0$ sufficiently small in terms of $\eta_1$ and the other parameters. Putting together \eqref{eConclusioni}, \eqref{eConclusionii}, \eqref{eConclusioniii}, \eqref{eConclusioniv} and absorbing the power savings into the main term concludes the proof. 
\end{proof}

\subsection{Reducing to square-frees} 
It is useful to work with simpler sums than the one over $d$ in Theorem~\ref{tNT}. 
We give a list of assumptions under which such a simplification is possible:

\begin{lemma}
\label{tSqfReduction}
Let $\kappa \geq 1$ be a real number and let $f^\ast: \Z_{\geq 1} \rightarrow [0, \infty)$ be such that
\begin{itemize}
\item $f^\ast(ab) \leq f^\ast(a) \kappa^{\omega(b)}$ for all coprime $a, b \geq 1$,
\item $f^\ast(as^2) \leq f^\ast(a)$ for all $a, s \geq 1$.
\end{itemize}
Fix constants $B > 10, c > 0$ and assume that $h: \Z_{\geq 1} \rightarrow [0, \infty)$ is multiplicative and satisfies 
\begin{itemize}
\item $h(p^e) \leq h(p^2) \leq Bp^{-2}$ for all $e \geq 2$ and primes $p$,
\item $h(p^e) \leq B p^{-ce}$ for all $e \geq 1$ and primes $p$.
\end{itemize} 
Then for all $X\geq 2$ we have 
\[
\sum_{a \leq X} f^\ast(a) h(a) \ll \sum_{\substack{1 \leq a \leq X \\ a \ \mathrm{square-free}}} f^\ast(a) h(a).
\]
\end{lemma}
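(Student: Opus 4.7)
I propose to exploit the unique decomposition of every positive integer $a$ as $a = \ell m$ with $\ell$ squarefree, $m$ \emph{squarefull} (i.e., every prime dividing $m$ does so to at least the second power), and $\gcd(\ell,m) = 1$: simply take $\ell$ to be the product of the primes dividing $a$ to exactly the first power and $m = a/\ell$. The first hypothesis on $f^\ast$ applied to the coprime pair $(\ell, m)$ gives $f^\ast(a) \leq f^\ast(\ell)\, \kappa^{\omega(m)}$, and the multiplicativity of $h$ together with $\gcd(\ell,m)=1$ gives $h(a) = h(\ell) h(m)$. Regrouping the sum according to $m$ and using non-negativity to drop both the coprimality condition $\gcd(\ell,m) = 1$ and the constraint $\ell \leq X/m$, I obtain
\[
\sum_{a \leq X} f^\ast(a) h(a) \,\leq\, S \cdot \sum_{\substack{\ell \leq X \\ \ell \text{ squarefree}}} f^\ast(\ell) h(\ell), \qquad S := \sum_{m \text{ squarefull}} \kappa^{\omega(m)} h(m).
\]
The right-hand side is the desired majorant multiplied by the constant $S$, so it suffices to show $S$ is finite.

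By multiplicativity of both $\kappa^{\omega(\cdot)}$ and $h$, the quantity $S$ factors as the Euler product $\prod_p \bigl(1 + \kappa \sum_{e \geq 2} h(p^e)\bigr)$. To bound the local factor I would combine the two hypotheses on $h$: set $e_0 := \lceil 2/c \rceil$; for $2 \leq e \leq e_0$ (at most $e_0 = O_c(1)$ terms) apply $h(p^e) \leq h(p^2) \leq B p^{-2}$, while for $e > e_0$ apply $h(p^e) \leq B p^{-ce}$ and sum the geometric tail to get $O_c(p^{-c(e_0 + 1)}) = O_c(p^{-2})$. Adding these two contributions yields $\sum_{e \geq 2} h(p^e) \ll_{B,c} p^{-2}$, so $S$ converges by comparison with $\prod_p (1 + O_c(p^{-2}))$, which is finite since $\sum_p p^{-2} < \infty$.

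The only delicate point is this prime-by-prime bound on $\sum_{e \geq 2} h(p^e)$: because the constant $c$ may be arbitrarily small, neither hypothesis on $h$ suffices in isolation---the bound $h(p^e) \leq Bp^{-ce}$ fails to sum over $p$ when $c < 1/2$, and the bound $h(p^e)\leq B p^{-2}$ fails to sum over $e$. Splitting the exponent range at the crossover $e_0 = \lceil 2/c \rceil$ makes each estimate operate in its strong regime and produces the sought $O_c(p^{-2})$ decay. Once this tail estimate is secured, the remaining manipulations (the squarefree/squarefull decomposition, the dropping of coprimality, and the Euler factorization) are all routine, and the second hypothesis on $f^\ast$ is not required for this argument.
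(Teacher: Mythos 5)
Your argument is correct, and it is a genuine, mildly simplifying variation on the paper's. The paper decomposes $a = \alpha^2\beta\gamma$ with $\mu(\beta\gamma)^2 = 1$, $\beta\mid\alpha$, $\gcd(\alpha\beta,\gamma)=1$, and then needs \emph{both} hypotheses on $f^\ast$: the second one to pass from $f^\ast(\beta\gamma\cdot\alpha^2)$ to $f^\ast(\beta\gamma)$, and the first to peel $\beta$ off $\gamma$ at the cost of $\kappa^{\omega(\beta)}$. This gives the Euler factor $1+\sum_{e\geq 1}\bigl(\kappa\, h(p^{2e+1})+h(p^{2e})\bigr)$, with $\kappa$ appearing only on the odd powers. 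You instead split $a=\ell m$ into its squarefree and squarefull (coprime) parts and apply only the first hypothesis, $f^\ast(\ell m)\leq f^\ast(\ell)\kappa^{\omega(m)}$, giving the slightly cruder local factor $1+\kappa\sum_{e\geq 2}h(p^e)$; as you observe, the second hypothesis on $f^\ast$ is not used at all. What the paper's version buys is a tighter Euler product (fewer $\kappa$ factors); what yours buys is a cleaner decomposition and a weaker hypothesis set. Since both products converge under the stated decay on $h$, the difference is cosmetic here. Your convergence argument is also essentially the one in the paper: splitting the exponent range near $e_0\approx 2/c$ so that the $p^{-2}$ bound controls the finitely many small exponents and the $p^{-ce}$ bound controls the geometric tail. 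One small point worth making explicit: in the tail estimate $\sum_{e>e_0}p^{-ce}=p^{-c(e_0+1)}/(1-p^{-c})$, the denominator $1-p^{-c}$ depends on $p$, but is bounded below by $1-2^{-c}>0$ uniformly in $p\geq 2$, so the $O_c$ you write is legitimate (the paper handles the same issue by treating $p\leq 2^{1/c}$ separately).
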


\begin{proof}
Each $a \in \Z_{\geq 1}$ factors uniquely as $ \alpha^2 \beta \gamma$, where $\mu(\beta \gamma)^2 = 1$, $ \beta \mid \alpha$ and $\gcd(\alpha \beta, \gamma) = 1$. Then
$$
f^\ast(\alpha^2 \beta \gamma) h(\alpha^2 \beta \gamma) \leq \kappa^{\omega(\beta)} f^\ast(\gamma) h(\alpha^2 \beta) h(\gamma),
$$ 
thus, the sum in the lemma is at most 
$$
\sum_{\substack{\alpha^2 \beta \leq X \\ \beta \mid \alpha}} \mu(\beta)^2 h(\alpha^2 \beta) \kappa^{\omega(\beta)} 
\hspace{-0,2cm}
\sum_{\gamma \leq X/(\alpha^2 \beta)}
\hspace{-0,2cm}
\mu(\gamma)^2 f^\ast(\gamma) h(\gamma)
\leq \sum_{\gamma \leq X} \mu(\gamma)^2 f^\ast(\gamma) h(\gamma)
\sum_{\substack{\alpha, \beta \geq 1 \\ \beta \mid \alpha}} \mu(\beta)^2 h(\alpha^2 \beta) \kappa^{\omega(\beta)},
$$ 
where we used the non-negativity of the values of $h$ and $f^*$. The new sum over $\alpha, \beta$ equals 
$$
\prod_p \bigg(1 + \sum_{e = 1}^\infty \bigg(\kappa h(p^{2e + 1}) + h(p^{2e})\bigg)\bigg).
$$
If $p \leq 2^{1/c}$ the sum converges by $h(p^e) \leq B p^{-ce}$. For $p > 2^{1/c}$ and $E = 1 + [2/c]$, the sum is 
$$
\leq \frac{(1 + \kappa) BE}{p^2} + (1 + \kappa) B \sum_{e > E} p^{-ce} \leq \frac{(1 + \kappa) BE}{p^2} + \frac{2 (1 + \kappa) B}{p^{c E}} \leq \frac{(1 + \kappa) B (E + 2)}{p^2},
$$ 
hence, the product over $p$ converges.
\end{proof}

\section{Weighted moments}
\label{sMoments}
We will now handle sums rather similar to the output of Section \ref{sNT}. Let $g(m, n)$ be the R\'edei majorant from Definition~\ref{def:firstdefn} or equation~\eqref{def:stackbundles}. These are of the type described in Definition \ref{dClasses} but they have extra structure coming from quadratic residues. To be precise, let $q_1 < \dots < q_r$ for the odd prime divisors of $m$ so that $g(m, n)$ depends only on the class $\bm{\epsilon}$ of $n$ in
$$
\prod_{i = 1}^r \frac{(\Z/q_i\Z)^\ast}{(\Z/q_i\Z)^{\ast 2}},
$$
which allows us to introduce the quantity $g(m, \bm{\epsilon})$ for every $\bm{\epsilon} \in \mathbb{F}_2^r$. Following \S
\ref{sNT}, we are inspired to calculate the weighted moment
\begin{align}
\label{ef1def}
\sum_{\substack{1 \leq m \leq X \\ m \text{ odd}}} \mu(m)^2 f^\ast(m) h(m), \quad \quad f^\ast(m) = \frac{1}{2^r} \sum_{\bm{\epsilon} = (\epsilon_i)_{1 \leq i \leq r}} g(m, \bm{\epsilon}).
\end{align} 
One can replace $h(m)$ by $F(m) = m h(m)$ via partial summation. The following class of functions is appropriate for character techniques:

\begin{definition}
\label{dSuitableh}
We say that a non-negative multiplicative function $F$ is appropri\'ee if 
\begin{enumerate} 
\item[(i)] $F(n) \leq \tau(n)^L$ for all $n\geq 1$, 
\item[(ii)] there exists $\alpha>0$ and $C(\alpha)>0$ such that for $X\geq 2 $ we have 
\begin{equation}
\label{eq:Calpha} \prod_{p\leq X} \bigg(1+\frac{F(p)}{p}\bigg) \geq C(\alpha) (\log X)^{\alpha},
\end{equation}
\item[(iii)] there exists a finite exceptional set $E\subseteq \Z_{\geq 1}$ such that for all fixed real numbers $A > 1$ there exists a constant $C = C(A) > 0$ 
for which 
\begin{align}
\label{eSW}
\left|\sum_{p \leq X} F(p) \chi(p)\right| \leq \frac{CX}{(\log X)^A}
\end{align} 
whenever $X \geq 2$ and $\chi$ is a non-principal, quadratic, primitive Dirichlet character of conductor $q \not \in E$ bounded by $(\log X)^A$.
\end{enumerate}
\end{definition}

\noindent Condition $(i)$ will come up in the large sieve amongst other things. The assumptions in condition $(ii)$ will be important when trivially bounding the contribution from too many small variables. Condition $(iii)$ is a typical Siegel--Walfisz type condition. We need to allow for the exceptional moduli in some of our applications of algebraic nature.

Our next theorem, which is the main theorem of this section, achieves a good control on the weighted moments provided that $F$ is appropri\'ee.

\begin{theorem}
\label{tCharacter}
Let $k \geq 1$ be an integer, let $g(m, n)$ be the R\'edei majorant satisfying either Definition~\ref{def:firstdefn} or~\eqref{def:stackbundles}, and let
$$
f_k^\ast(m) := \left(\frac{1}{2^r} \sum_{\bm{\epsilon} = (\epsilon_i)_{1 \leq i \leq r}} g(m, \bm{\epsilon})\right)^k.
$$ 
Assume that $F$ is appropri\'ee. Then we have
$$
\sum_{1 \leq m \leq X} \mu(2m)^2 f_k^\ast(m) F(m) \ll \frac{X}{\log X} \prod_{p \leq X} \left(1 + \frac{F(p)}{p}\right),
$$ 
where the implied constant depends on $F$ and $k$.
\end{theorem}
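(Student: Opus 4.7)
The plan is to expand $f_k^\ast$ into an explicit sum of Jacobi symbols and then reduce matters to character sum bounds already available in the literature, invoking the Siegel-Walfisz condition \eqref{eSW} of appropri\'ee functions, the LSD method of \cite{GK} and the hyperbolic large sieve of \cite{Wil} highlighted in the introduction as the new analytic ingredients. The starting point is an explicit formula for $f_1^\ast$ coming from a linear-algebra analysis of the kernel of the twisted R\'edei matrix of Definition~\ref{def:firstdefn}. For $v \in \mathbb{F}_2^r$ a row-by-row inspection shows that $R(m, n) v = 0$ pins the value $(n/q_i)$ down to a single possibility for each $i \in \supp(v)$, while imposing the $n$-free condition $\prod_{k \in \supp(v)}(q_k/q_i) = 1$ at each $i \notin \supp(v)$. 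Averaging over the $2^r$ possible classes of $n$ modulo $\prod q_i$ and expanding $\mathds{1}[\chi = 1] = (1 + \chi)/2$ in the remaining conditions, one obtains
\[
f_1^\ast(m) = \frac{1}{2^{\omega(m)}} \sum_{m = m_1 m_2 m_3} \left(\frac{m_1}{m_2}\right),
\]
the sum being over ordered factorisations into pairwise coprime odd squarefree parts. Taking $k$-th powers, each prime $p \mid m$ is assigned a vector in $\{1,2,3\}^k$ of placements, so that the sum in Theorem~\ref{tCharacter} unfolds into
\[
\sum_{(n_{\vec a})} \prod_{\vec a \in \{1,2,3\}^k} \frac{\mu(2 n_{\vec a})^2 F(n_{\vec a})}{2^{k \omega(n_{\vec a})}} \prod_{j=1}^{k} \left(\frac{N_1^{(j)}}{N_2^{(j)}}\right), \qquad N_i^{(j)} := \prod_{\vec a : a_j = i} n_{\vec a},
\]
over pairwise coprime squarefree tuples $(n_{\vec a})$ with $\prod_{\vec a} n_{\vec a} \leq X$.

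The main term comes from the \emph{trivial} tuples where every Jacobi symbol simplifies to $1$ because for each $j$ either $N_1^{(j)} = 1$ or $N_2^{(j)} = 1$. Fixing such a choice $\vec c \in \{1,2\}^k$, the remaining $\vec a$ live in a set $S_{\vec c} \subseteq \{1,2,3\}^k$ of cardinality $2^k$; using that a squarefree integer $n$ admits exactly $t^{\omega(n)}$ ordered coprime factorisations into $t$ parts, the contribution for $\vec c$ collapses to $\sum_{n \leq X} \mu(2n)^2 F(n) (2^k)^{\omega(n)}/2^{k\omega(n)} = \sum_{n \leq X} \mu(2n)^2 F(n)$. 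An inclusion-exclusion over the $2^k$ choices of $\vec c$ still bounds the trivial contribution by $\ll \sum_{n \leq X} \mu(2n)^2 F(n)$, which by the Shiu-Nair upper bound for multiplicative functions together with the lower bound \eqref{eq:Calpha} built into the definition of appropri\'ee is $\ll \frac{X}{\log X} \prod_{p \leq X}(1 + F(p)/p)$, matching the target.

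The remaining non-trivial tuples must be handled by cancellation. I would split dyadically and according to a cut-off at $X^\delta$ with $\delta$ small. When all the non-trivial $N_i^{(j)}$ are $\leq X^\delta$, the Siegel-Walfisz condition \eqref{eSW} applied to each resulting Dirichlet character supplies arbitrary $\log$-power savings, with the finitely many exceptional conductors in the set $E$ being dispensed with via the trivial bound $F \leq \tau^L$. When some $N_i^{(j)}$ exceeds $X^\delta$, the hyperbolic large sieve of \cite{Wil} delivers cancellation in the bilinear sum in $(N_1^{(j)}, N_2^{(j)})$ after the remaining variables are frozen, while the LSD method of \cite{GK} is used to replace the general multiplicative weight $F$ by a flat one at the cost of the expected Euler product factor. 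The chief obstacle will be the entanglement of the $k$ simultaneous symbols: primes appear in several $N_i^{(j)}$ at once, so a combinatorial reduction is needed to isolate one bilinear pair for the large sieve while absorbing the other $k - 1$ symbols into a multiplicative factor whose Siegel-Walfisz and LSD hypotheses can still be verified uniformly in the fixed variables. This is precisely where the argument extends beyond the $k=1$ treatments of Fouvry-Kl\"uners \cite{FK4, FKWeighted} and Heath-Brown \cite{HBCongruent}.
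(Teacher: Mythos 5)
Your outline mirrors the paper's approach: expand $f_1^\ast(m)=2^{-\omega(m)}\sum_{m=m_1m_2m_3}(m_1/m_2)$ (this is precisely the paper's identity for the R\'edei majorant of Definition~\ref{def:firstdefn}), raise to the $k$-th power to obtain a multivariate Jacobi-symbol sum, isolate the ``diagonal'' contribution coming from unlinked index sets (your $2^k$-element sets $S_{\vec c}$ are exactly the maximally unlinked sets counted by Lemma~\ref{lUnlinked}), bound it by Shiu, and then obtain cancellation in the off-diagonal via Siegel--Walfisz and the hyperbolic large sieve, promoted from primes to integers by the \texttt{LSD} machinery. Your main-term computation (each $\vec c$ collapses to $\sum_{n\leq X}\mu(2n)^2F(n)$, union-bounded over $2^k$ choices of $\vec c$) is correct.

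That said, three concrete gaps remain. First, Theorem~\ref{tCharacter} is stated for \emph{both} the class-group majorant of Definition~\ref{def:firstdefn} and the Selmer majorant $g_\mathbf{r}$ of~\eqref{def:stackbundles}, and you treat only the former. The Selmer expansion involves factorisations into $16$ parts indexed by $\mathbb{F}_2^4$, a link relation coming from the quadratic form $\psi$ of \cite[p.~338]{HBCongruent}, and maximal unlinked sets of size $4^k$ (Lemma~\ref{lUnlinked2}); the three-part analysis does not transfer verbatim. Second, the cut-off at $X^\delta$ is too coarse. The Siegel--Walfisz hypothesis~\eqref{eSW} only yields log-power savings for conductors of polylogarithmic size, so the regime ``all non-trivial variables $\leq X^\delta$'' cannot be dispatched by~\eqref{eSW} alone. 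The paper instead works with three nested thresholds (``large'' $=\exp((\log X)^{A_1})$, ``medium'' $=(\log X)^{A_2}$, ``active'' $=$ not exceptional), and crucially the contribution of configurations with fewer than $b^k$ large variables is handled by a pure Euler-product estimate (Lemma~\ref{lsm}), exploiting the lower bound~\eqref{eq:Calpha}, with \emph{no cancellation whatsoever}; the bilinear large sieve and Siegel--Walfisz inputs are invoked only against linked pairs of medium, resp.\ active, variables. Third, you correctly name the entanglement of the $k$ simultaneous symbols as the chief obstacle, but the proposal does not resolve it; the paper's resolution is the case analysis in the proof of Theorem~\ref{tMoments}, in which once the small, large-sieve and Siegel--Walfisz cases are discharged one is forced into $|\mathcal{L}|=|\mathcal{M}|=|\mathcal{A}|=b^k$, whereupon every remaining variable lies in a finite set determined by the exceptional moduli $E$ and the triangle inequality together with Shiu's bound close the estimate without any further bilinear work. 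Supplying these three points would turn your sketch into a complete proof.
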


Before we embark on the proof of Theorem \ref{tCharacter}, we will state some well-known oscillation results of use to us.

\subsection{Oscillation results}
Various double oscillation results in the literature are available \cite{San, Wil}, starting from the pioneering work of Heath-Brown \cite{HB}. We will use the following variation.

\begin{lemma}
\label{lLargeSieve}
Let $k \geq 1$ be an integer. Then there exists a constant $C > 0$ depending only on $k$ such that the following holds. Let $\alpha_m, \beta_n$ be sequences of complex numbers supported on odd, square-free numbers satisfying $|\alpha_m| \leq \tau(m)^k, |\beta_n| \leq \tau(n)^k$.
Then for all $X, Y \geq 2$ we have 
\[
\left|\sum_{1 \leq m \leq X} \sum_{1 \leq n \leq Y} \alpha_m \beta_n \left(\frac{m}{n}\right)\right| \leq C XY(X^{-1/6} + Y^{-1/6}) (\log XY)^C.
\]
\end{lemma}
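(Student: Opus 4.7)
My plan is to prove this double oscillation bound as a fairly direct consequence of Heath-Brown's quadratic large sieve for real characters \cite{HB}, combined with Cauchy--Schwarz. In fact, the argument I have in mind yields a stronger bound of order $XY(X^{-1/2}+Y^{-1/2})(\log XY)^{O_k(1)}$, which trivially implies the weaker $(X^{-1/6}+Y^{-1/6})$ bound in the statement.

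As a preliminary step I would use quadratic reciprocity to convert $(m/n)$ into $(n/m)$: for odd coprime $m, n$ these differ by a factor $(-1)^{\frac{m-1}{2}\frac{n-1}{2}}$ depending only on $m, n \bmod 4$. Splitting the sums according to the classes of $m$ and $n$ modulo $4$ (a bounded number of cases) lets me replace $(m/n)$ by $(n/m)$ at the cost of multiplying $\alpha_m, \beta_n$ by indicator functions of residue classes, which does not alter the hypothesis on their sizes.

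Next, I would apply Cauchy--Schwarz in $m$. Since $\alpha_m$ is supported on odd square-free integers, one obtains
\[
\Big|\sum_{m, n} \alpha_m \beta_n \Big(\tfrac{n}{m}\Big)\Big|^2 \leq \Big(\sum_{m \leq X}^{\flat} |\alpha_m|^2\Big) \Big(\sum_{m \leq X}^{\flat} \Big|\sum_{n \leq Y} \beta_n \Big(\tfrac{n}{m}\Big)\Big|^2\Big),
\]
where $\flat$ denotes restriction to odd square-free integers. The first factor is controlled by $\sum_{m \leq X} \tau(m)^{2k} \ll X(\log X)^{O_k(1)}$ via Shiu's theorem. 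For the second factor, Heath-Brown's quadratic large sieve asserts that, for $\beta_n$ supported on odd square-free integers,
\[
\sum_{m \leq X}^{\flat} \Big|\sum_{n \leq Y} \beta_n \Big(\tfrac{n}{m}\Big)\Big|^2 \ll_\epsilon (XY)^\epsilon (X+Y) \sum_n |\beta_n|^2 \ll (XY)^\epsilon (X+Y) Y (\log Y)^{O_k(1)}.
\]
Combining the two factors, taking square roots and choosing $\epsilon>0$ sufficiently small so that the $(XY)^\epsilon$ can be absorbed into a power of $\log XY$, I arrive at a final bound of $\ll (XY)^{1/2}(X+Y)^{1/2}(\log XY)^{O_k(1)}$. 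Since $(X+Y)^{1/2} \leq X^{1/2}+Y^{1/2}$, this is $\ll XY(X^{-1/2}+Y^{-1/2})(\log XY)^{O_k(1)}$, which is comfortably stronger than required.

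There is no serious analytic obstacle here, as Heath-Brown's large sieve is being used as a black box. The only points requiring care are the square-free restriction on both variables (which is built into the hypotheses) and the quadratic reciprocity manipulation at the outset; both are routine. The weaker $X^{-1/6}+Y^{-1/6}$ form stated in the lemma is presumably chosen because this is what subsequent applications in \S\ref{sMoments} need, and it permits cleaner interfaces with other bounds appearing there.
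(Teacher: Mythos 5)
Your strategy---quadratic reciprocity, Cauchy--Schwarz in $m$, then Heath-Brown's quadratic large sieve---is a legitimate route and genuinely different from the paper's, which simply cites Koymans--Rome \cite{KR} and Friedlander--Iwaniec \cite{MR2675875}; those references prove the estimate directly by a diagonal/off-diagonal split plus P\'olya--Vinogradov, which is what produces the exponent $1/6$ with only log-power losses. However, your argument has a genuine gap at the sentence ``choosing $\epsilon>0$ sufficiently small so that the $(XY)^\epsilon$ can be absorbed into a power of $\log XY$.'' No such absorption is possible: for any fixed $\epsilon>0$ one has $(XY)^\epsilon \gg (\log XY)^A$ for every $A$ once $XY$ is large. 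Concretely, suppose $X \le Y$. After Cauchy--Schwarz and the large sieve your bound reads
$$
\ll (XY)^{\epsilon/2}\, X^{1/2}\, Y\, (\log XY)^{O_k(1)},
$$
while the target is $X^{5/6} Y (\log XY)^{C}$, so you need $Y^{\epsilon} \ll X^{1/3}(\log XY)^{O(1)}$; this fails as soon as $X \ll Y^{3\epsilon}$. Supplementing with the trivial bound $\ll XY(\log XY)^{O_k(1)}$ only handles $X \ll (\log XY)^{O(1)}$, leaving a non-empty range roughly $(\log XY)^{A} \ll X \ll Y^{3\epsilon}$ where neither estimate suffices, so your ``comfortably stronger'' conclusion $XY(X^{-1/2}+Y^{-1/2})(\log XY)^{O(1)}$ is not actually established. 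To salvage the approach you would have to let $\epsilon$ decay with $X,Y$ (for instance $\epsilon \asymp 1/\log XY$, making $(XY)^\epsilon=O(1)$) and then control the $\epsilon$-dependence of the implied constant in Heath-Brown's theorem, showing it is at most polynomial in $1/\epsilon$; you neither do this nor note that it is needed, and the large-sieve literature does not usually record that dependence. The paper sidesteps the problem entirely because the cited double-oscillation lemmas never pass through an $\epsilon$-power loss.
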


\begin{proof}Note that Koymans--Rome \cite[Proposition 4.3]{KR}, or alternatively \cite[Lemma 2]{MR2675875},
requires that the coefficients $\alpha_m$, $\beta_n$ are bounded by $1$ in absolute value. However, the proof goes through with straightforward modifications in the more general setting where $\alpha_m$, $\beta_n$ are divisor bounded.
\end{proof}

The next result is a version of the work in~\cite{Wil}.

\begin{corollary}
\label{cLargeSieve}
Let $s \geq 1, r \geq 2$ be integers. Then there exists a constant $C > 0$ depending only on $s$ and $r$ such that the following holds. Let $\alpha, \beta: \Z_{\geq 1}^{r - 1} \to \mathbb C$ be supported on odd, square-free numbers satisfying $|\alpha(\b n)|, |\beta(\b n)| \leq \tau(n_1)^s\cdots \tau(n_{r - 1})^s$. Then for all $X, z \geq 2$ we have 
\[
\left|\sum_{\substack{\b m \in \Z_{\geq 1 }^r, m_1\cdots m_r \leq X \\ m_1,m_2 > z}} 
\left(\frac{m_1}{m_2}\right) \alpha(m_1, m_3, \ldots, m_r) \beta(m_2, m_3, \ldots, m_r)\right| \leq \frac{CX (\log X)^C}{z^{1/20}}.
\]
\end{corollary}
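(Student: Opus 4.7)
The plan is to reduce Corollary~\ref{cLargeSieve} to Lemma~\ref{lLargeSieve} by treating $(m_3,\ldots,m_r)$ as outer parameters and performing a dyadic decomposition in the $(m_1,m_2)$-plane. Fix $\mathbf{m}' := (m_3,\ldots,m_r)$, write $N := m_3\cdots m_r$ and $Y := X/N$; the inner sum over $m_1,m_2 > z$ with $m_1 m_2 \leq Y$ is empty unless $Y \geq z^2$. The conditioned sequences
\[
\alpha'(m_1) := \alpha(m_1, m_3, \ldots, m_r), \qquad \beta'(m_2) := \beta(m_2, m_3, \ldots, m_r)
\]
remain supported on odd square-free integers and satisfy $|\alpha'(m_1)| \leq \tau(m_1)^s D(\mathbf{m}')$ and $|\beta'(m_2)| \leq \tau(m_2)^s D(\mathbf{m}')$, where $D(\mathbf{m}') := \tau(m_3)^s \cdots \tau(m_r)^s$ is a scalar factor we shall absorb at the end via the standard estimate $\sum_{n \leq X} \tau(n)^{2s}/n \ll (\log X)^{O_s(1)}$.

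Second, I would dyadically decompose the hyperbolic region into $O((\log X)^2)$ boxes $(M_1, 2M_1]\times(M_2, 2M_2]$ with $M_i$ a power of two, $M_i \geq z/2$ and $M_1 M_2 \leq 2Y$, and apply Lemma~\ref{lLargeSieve} (with the divisor exponent $k = s$) on each box with coefficients $\alpha'/D(\mathbf{m}')$ and $\beta'/D(\mathbf{m}')$. This yields, on each dyadic piece,
\[
\ll D(\mathbf{m}')^2\, M_1 M_2 \bigl(M_1^{-1/6}+M_2^{-1/6}\bigr)(\log X)^{C_1}.
\]
Summing the first contribution $M_1^{5/6} M_2$ over dyadic $M_1, M_2$ with $M_i \geq z/2$ and $M_1 M_2 \leq 2Y$: the geometric sum in $M_2 \leq 2Y/M_1$ contributes $\ll Y/M_1$, and $\sum_{M_1\text{ dyadic},\, M_1 \geq z/2} M_1^{-1/6}\ll z^{-1/6}$. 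The $M_2^{-1/6}$ contribution is controlled symmetrically. Thus the inner double sum is $\ll D(\mathbf{m}')^2 Y z^{-1/6}(\log X)^{C_2}$ for each fixed $\mathbf{m}'$.

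Finally, summing over $\mathbf{m}'$ with $N \leq X/z^2$ gives
\[
\sum_{N \leq X/z^2} D(\mathbf{m}')^2 \cdot \frac{X}{N}\, z^{-1/6}(\log X)^{C_2} \ll X z^{-1/6} (\log X)^{C_3},
\]
which in fact is a touch stronger than the claimed bound; the exponent $1/20$ in the statement presumably leaves a comfortable margin for logarithmic losses or for mild edge cases (e.g.\ coprimality bookkeeping should it be needed). The \emph{main obstacle} is purely organizational: one must extract the shared-variable divisor factors so that Lemma~\ref{lLargeSieve} applies with constants uniform in $\mathbf{m}'$, and one must arrange the dyadic decomposition so that the condition $m_1,m_2 > z$ is enforced cleanly on each box. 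Once that reduction is in place the remainder is a routine assembly of geometric and divisor-moment estimates.
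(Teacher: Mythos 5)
Your general strategy — reduce to $r=2$ by pulling out the shared-variable divisor weights and then decompose the hyperbolic region into boxes to apply Lemma~\ref{lLargeSieve} — is the same strategy the paper follows. The reduction to $r=2$ and the way you absorb the factor $D(\mathbf{m}')$ via the harmonic divisor sum are fine. But there is a genuine gap in the $r=2$ step, and it is not merely ``organizational.''

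The dyadic decomposition does not respect the constraint $m_1 m_2 \leq Y$. A dyadic box $(M_1, 2M_1]\times(M_2, 2M_2]$ satisfying $M_1 M_2 \leq 2Y$ may extend well outside the hyperbola, and Lemma~\ref{lLargeSieve} is stated for rectangles, not for the intersection of a rectangle with the region $m_1 m_2 \leq Y$. On a boundary box (say $M_1 M_2 \leq Y < 4M_1M_2$) the truncation by the hyperbola couples $m_1$ and $m_2$, so the sum is no longer of the bilinear shape that the large sieve handles; and you cannot simply run the lemma over the full box and subtract off the part with $m_1 m_2 > Y$, since that part is again a hyperbolic region. If you instead bound the boundary boxes trivially, each contributes roughly $Y$ (up to logarithms) and there are about $\log Y$ of them, so the boundary contribution wipes out the entire savings. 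This is exactly why the paper does not use a dyadic decomposition. It uses the much finer multiplicative partition with ratio $A = 1 + z^{-1/20}$, so that all boxes straddling the hyperbola lie in the thin strip $X - cXz^{-1/20} \leq mn \leq X$. The interior boxes are handled with Lemma~\ref{lLargeSieve} (there are about $z^{1/10}(\log X)^2$ of them, each contributing $\ll Xz^{-1/6}(\log X)^C$, giving the net $Xz^{-1/15}$), and the thin boundary strip is bounded trivially by Shiu's theorem on $\sum_{X - cXz^{-1/20} \leq t \leq X}\tau(t)^{2+2s}$, yielding $\ll Xz^{-1/20}(\log X)^{O_s(1)}$. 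Without a finer-than-dyadic partition and a separate estimate for the boundary strip, your argument does not close.

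Concretely: replace the dyadic boxes by the $A$-adic boxes $I_i \times I_j$ with $A = 1 + z^{-1/20}$ and $I_i = (zA^i, zA^{i+1}]$, apply Lemma~\ref{lLargeSieve} only to boxes entirely inside the hyperbola, and estimate the remaining boundary region trivially (e.g.\ via Shiu) after observing that the remaining pairs $(m_1,m_2)$ satisfy $Y - cYz^{-1/20} \leq m_1 m_2 \leq Y$. With this correction the rest of your write-up (the reduction to $r=2$ and the harmonic sum over $\mathbf{m}'$) is sound.
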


\begin{proof} 
We first deal with the case $r=2$ and we will at the end deal with general $r$. Set $A = 1 + z^{-1/20}$ and define $I_i = (zA^i, zA^{i + 1}]$ for an integer $i\geq 0$. We next consider all integers $i,j\geq 0$ such that the box $I_i\times I_j$ is contained inside the hyperbola $mn\leq X$. 
For each such box we use Lemma \ref{lLargeSieve} to obtain an error term $\ll X z^{-1/6} (\log X)^C$. To multiply this error term by the total number of boxes note that 
we need $\ll z^{1/10} (\log X)^2$ boxes
 to cover $[1,X]^2$ , therefore, the resulting error term is $\ll X z^{1/10-1/6} (\log X)^{C+2}$.

The $(m, n)$ that are left over satisfy 
\begin{align}
\label{eBadRange}
X - \frac{cX}{z^{1/20}} \leq mn \leq X
\end{align}
for some absolute constant $c > 0$. Indeed, if $I_i\times I_j$ intersects the interior and the exterior of the hyperbola then $z^2 A^{i + j} \leq X \leq z^2 A^{i+j+2}$, from which one can easily deduce that the remaining $(m, n) \in I_i\times I_j$ satisfy $mn \geq X(1-z^{-1/20})^{-2}$ that proves~\eqref{eBadRange}. Using the divisor function bounds on $\alpha, \beta$ and setting $t = mn$ the left over region makes a contribution that is 
$$\ll \sum_{X-cXz^{-1/20}\leq t\leq X} \tau(t) \sum_{mn=t} \tau(m)^s\tau(n)^s\leq 
\sum_{X-cXz^{-1/20}\leq t\leq X} \tau(t)^{2+2s}\ll \frac{X}{z^{1/20}} (\log X)^{4^{s+1}}
$$ by Shiu's work \cite[Theorem 1]{Shiu}. We may freely assume that $z\leq X$ since otherwise the theorem is trivial, hence, the assumptions in Shiu's theorem are met. This concludes the proof when $r = 2$.

We now prove the general case with any $r \geq 2$. Define 
$$
\widetilde {\alpha}(m_1,m_3,\ldots,m_r)=\frac{ \alpha(m_1,m_3,\ldots,m_r)}{\tau(m_3)^s\cdots \tau(m_r)^s}
\ \textrm{ and } \ 
\widetilde {\beta}(m_2,m_3,\ldots, m_r)=\frac{ \beta(m_2,m_3,\ldots, m_r)}{\tau(m_2)^s\cdots \tau(m_r)^s}.
$$ 
The triangle inequality yields the bound 
$$
\sum_{\substack{\b m \in \Z_{\geq 1}^{r - 2} \\ m_3 \cdots m_r \leq X}} \tau(m_3)^{2s} \cdots \tau(m_r)^{2s} 
\left|\sum_{\substack{m_1,m_2 \in \Z_{>z } \\ m_1 m_2 \leq X/(m_3\cdots m_r)}} 
\left(\frac{m_1}{m_2}\right) \widetilde \alpha(m_1, m_3, \ldots,m_r) \widetilde \beta(m_2, m_3,\ldots, m_r)\right|.
$$ 
By our assumptions we have $|\widetilde {\alpha}(m_1,m_3,\ldots,m_r)|\leq \tau(m_1)^s$ and $|\widetilde {\beta}(m_2,m_3,\ldots, m_r)|\leq \tau(m_2)^s$, hence, we can use the known special case $r = 2$ for the inner sum over $m_1, m_2$.
We get 
$$
\ll \frac{X(\log X)^C}{z^{1/20}} \prod_{i = 3}^r \sum_{\substack{m_i \leq X}} \frac{\tau(m_i)^{2s} }{m_i}
\ll \frac{X(\log X)^{C'}}{z^{1/20}},
$$ 
where $C' = C + (r - 2)4^s$. 
\end{proof}

Our next theorem will be used to convert information on partial sums over primes to partial sums over all integer values. We employ this result from the work of Granville--Koukoulopoulos \cite{GK} in the version stated by Koukoulopoulos {\cite[Theorem 13.2]{Kou}}. The key feature that is useful to us is the explicit dependence of the implied constant on the multiplicative function. 

\begin{theorem}[Beyond \texttt{LSD}]
\label{tKou}
Let $Q \geq 2$ be a parameter and $f$ be a multiplicative function with
\begin{align}
\label{eOnPrimes}
\sum_{p \leq x} f(p) \log p = O_A\left( \frac{x}{(\log x)^A} \right) \quad (x \geq Q)
\end{align}
for all $A > 0$. Also assume that $\vert f(n) \vert \leq \tau_k(n)$ for some positive real number $k$. Fix $\epsilon > 0$ and $J \in \Z_{\geq 1}$. Then
for all $x \geq e^{(\log Q)^{1 + \epsilon}}$ we have 
$$
\sum_{n \leq x} f(n) = O\left(\frac{x (\log Q)^{2k + J - 1}}{(\log x)^{J + 1 - \textup{Re}(\alpha)}}\right).
$$
The implied constant depends at most on $k$, $J$, $\epsilon$ and the implied constant in \eqref{eOnPrimes} for $A$ large enough in terms of $k$, $J$ and $\epsilon$ only.
\end{theorem}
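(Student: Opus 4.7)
My plan is to follow the Landau--Selberg--Delange method in its modern \emph{beyond-LSD} form, as developed by Granville and Koukoulopoulos. The starting object is the Dirichlet series $F(s) = \sum_{n \geq 1} f(n) n^{-s}$, which converges absolutely for $\Re(s) > 1$ by the divisor bound $|f(n)| \leq \tau_k(n)$. The hypothesis \eqref{eOnPrimes} provides Chebyshev-type cancellation for $\sum_{p \leq x} f(p) \log p$ at every scale $x \geq Q$, and the task is to turn this prime-level cancellation into analytic control of $F(s)$ just to the left of the line $\Re(s) = 1$.

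Step one: apply a truncated Perron formula
\[
\sum_{n \leq x} f(n) = \frac{1}{2\pi i}\int_{c - iT}^{c + iT} F(s) \frac{x^s}{s}\, ds + \text{(error)},
\]
with $c = 1 + 1/\log x$ and $T$ chosen so that the truncation error -- bounded by Shiu's theorem together with $|f(n)| \leq \tau_k(n)$ -- is acceptable. Step two: factor $F(s) = \zeta(s)^{\alpha} H(s)$ on a right half-plane, where $\alpha$ encodes the mean value of $f$ on primes and $H$ is a correction factor. The heart of the argument is to show that $H(s)$ admits holomorphic continuation to a region $\Re(s) \geq 1 - c_0/\log Q$ with the polynomial bound $|H(s)| \ll (\log Q)^{2k}$ there. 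This is where the Siegel--Walfisz hypothesis enters essentially, via partial summation and Mellin inversion relating $\log F(s)$ to integrals of the partial prime sums.

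Step three: shift the contour to $\Re(s) = 1 - c_0/\log Q$. The singularity at $s = 1$ contributes the main term: expanding $H(s)$ in a Taylor series around $s = 1$ to order $J$ and evaluating Hankel-contour integrals against $\zeta(s)^{\alpha} x^s/s$ produces $J$ terms of size $x(\log x)^{\Re(\alpha) - 1 - j}$, while the remaining error on the shifted line is absorbed into the claimed bound. Each derivative of $H$ costs a factor of $\log Q$ owing to the narrow strip of holomorphy, which accounts for the $(\log Q)^{2k + J - 1}$ prefactor; the $(\log x)^{\Re(\alpha) - J - 1}$ factor comes from the standard Selberg--Delange residue calculation at the branch singularity.

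The hard part will be the construction of the factorization $F = \zeta^{\alpha} H$ together with the polynomially bounded analytic continuation of $H$, relying only on the weak Siegel--Walfisz cancellation. This is the technical core of the beyond-LSD method, and I would handle it via Hal\'asz--Montgomery mean-value inequalities combined with a careful treatment of possible exceptional real zeros of Dirichlet $L$-functions modulo the exceptional set $E$ appearing in \eqref{eSW}. These exceptional characters are precisely what forces the lower threshold $x \geq e^{(\log Q)^{1 + \epsilon}}$ in the statement. Once $H$ is analytically tamed, the remaining Perron truncation analysis and the contour integrals on the shifted line are essentially mechanical.
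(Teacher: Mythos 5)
The paper does not actually prove Theorem~\ref{tKou}; it is quoted verbatim from Koukoulopoulos's book \cite[Theorem 13.2]{Kou} (itself a version of the main result of Granville--Koukoulopoulos \cite{GK}), so there is no proof in the paper to compare against. Your proposal is therefore assessed on its own merits, and there it has two serious problems.

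First, the route you describe is precisely the one the theorem's name announces it is circumventing. The classical Selberg--Delange method proceeds exactly as you say: Perron, factor $F(s)=\zeta(s)^{\alpha}H(s)$, continue $H$ holomorphically to some zero-free region, shift the contour, and evaluate a Hankel integral at $s=1$. The point of the ``beyond \texttt{LSD}'' method of \cite{GK} is that the hypothesis \eqref{eOnPrimes} does \emph{not} yield analytic continuation of $H$ past the line $\Re(s)=1$, and the argument must avoid any contour shift. Indeed, if $S(x)=\sum_{p\le x}f(p)\log p$ and $\sigma<1$, then even with $|S(x)|\ll_A x/(\log x)^A$ the integral $\int_Q^\infty S(x)x^{-\sigma-1}\,dx$ can still diverge (the power-of-$x$ saving is zero, and $(\log x)^{-A}$ is not integrable against $x^{-\sigma}$ for $\sigma<1$); so no Mellin-type manipulation of \eqref{eOnPrimes} can produce the holomorphic continuation your Step two requires. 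Granville--Koukoulopoulos instead work entirely on or to the right of $\Re(s)=1$, using the convolution identity $f(n)\log n=\sum_{d\mid n}f(d)\Lambda_f(n/d)$, the hyperbola method, and an iteration/bootstrap that converts the prime-sum cancellation directly into cancellation in $\sum_{n\le x}f(n)$, with no complex-analytic shift at all. This is a genuinely different mechanism, not a technical refinement of the one you outline.

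Second, your argument appeals to the Siegel--Walfisz condition \eqref{eSW} and the exceptional set $E$ as the source of holomorphy for $H$ and as the explanation for the threshold $x\geq e^{(\log Q)^{1+\epsilon}}$. But \eqref{eSW} is a hypothesis on a different function $F$ in Definition~\ref{dSuitableh} and plays no role in Theorem~\ref{tKou}; the only input to Theorem~\ref{tKou} is \eqref{eOnPrimes} together with $|f(n)|\le\tau_k(n)$. The threshold $x\ge e^{(\log Q)^{1+\epsilon}}$ comes from the bootstrap in \cite{GK,Kou}, not from excluding Siegel zeros. As written, the proposal would not close.
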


\subsection{First moment} 
Our discussion will naturally split in two cases corresponding to Theorems~\ref{tFibration} and~\ref{t12}.

\subsubsection{Number field setting}
We start by rewriting our sum in case $k = 1$ and $g(m, n)$ is the R\'edei majorant from Definition~\ref{def:firstdefn}. Recall that $q_1 < \dots < q_r$ denote the odd prime divisors of $m$ and that $g(m, n)$ depends only on the class $\bm{\epsilon}$ of $n$ in
$$
\prod_{i = 1}^r \frac{(\Z/q_i\Z)^\ast}{(\Z/q_i\Z)^{\ast 2}}.
$$
Also recall the definition of $f_1^\ast(m)$ in equation \eqref{ef1def} and the definition of $g(m, \bm{\epsilon})$ for $\bm{\epsilon} \in \mathbb{F}_2^r$. Given $\bm{\epsilon}$, an odd square-free integer $m$ and a prime $p$ dividing $m$, we define $t(p, m, \bm{\epsilon})$ to be $(-1)^{\epsilon_i}$, where $i$ is the unique integer such that $p$ is the $i$-th smallest prime divisor of $m$, i.e. $p = q_i$.

Recalling Definition~\ref{def:firstdefn} we see that the first weighted moment 
$$
\sum_{1 \leq m \leq X} \mu(2m)^2 f_1^\ast(m) F(m)
$$
becomes
$$
\sum_{r \geq 0} \frac{1}{2^r} \sum_{\bm{\epsilon} = (\epsilon_i)_{1 \leq i \leq r}} \sum_{\substack{m \leq X \\ \omega(m) = r}} 
F(m) \mu(2m)^2 \sum_{d \mid m} \frac{1}{2^r} \prod_{p \mid d} \left(1 + t(p, m, \bm{\epsilon}) \left(\frac{m/d}{p}\right)\right) \prod_{p \mid \frac{m}{d}} \left(1 + \left(\frac{d}{p}\right)\right),
$$ 
where $t(p, m, \bm{\epsilon})$ comes from $\chi_\alpha(\mathrm{Frob}_{q_i})$. From linear algebra, we get the identity
$$
\sum_{\bm{\epsilon} = (\epsilon_i)_{1 \leq i \leq r}} \prod_{p \mid d} \left(1 + t(p, m, \bm{\epsilon}) \left(\frac{m/d}{p}\right)\right) = 2^r
$$
by viewing the product as detecting solutions of $\omega(d)$ linearly independent equations in the variables $\epsilon_i$ with 
each solution being counted with weight $2^{\omega(d)}$. Thus, summing over $\bm{\epsilon}$ gives
\[
\sum_{m \leq X} \mu(2m)^2 f_1^\ast(m) F(m) =
\sum_{m \leq X} \frac{\mu(2m)^2 F(m)}{2^{\omega(m)}} \sum_{d \mid m} \prod_{p \mid \frac{m}{d}} \left(1 + \left(\frac{d}{p}\right)\right) = \sum_{def \leq X} \frac{\mu(2def)^2 F(def)}{2^{\omega(def)}} \left(\frac{d}{e}\right).
\]

\subsubsection{Elliptic curve setting}
Let us now suppose that $g_\mathbf{r}(m, n)$ is the R\'edei majorant from ~\eqref{def:stackbundles}. As before we let $m = q_1 \cdots q_r$ with $q_1 < 
\ldots < q_r$ all coprime to $\Omega$. Then the vector space $W'$ from \S\ref{ssSelmer} 
consists of pairs $(x_1, x_2)$, which are two positive integers dividing $m$. Alternatively, we may think of $W'$ as quadruplets $(D_1, D_2, D_3, D_4)$ with
$
m = D_1 D_2 D_3 D_4
$
and $D_1, D_2, D_3, D_4$ positive and coprime via the change of variables $x_1 = D_1D_2$ and $x_2 = D_1D_3$. Let us now detect 
when $(D_1, D_2, D_3, D_4)$ lies in the kernel of $M'_\mathbf{r}(m, \bm{\epsilon})$. 
This operation will be similar to \cite[Lemma 3]{HBCongruent}, or may alternatively be derived by 
studying \eqref{ePhiv} and using properties of local Hilbert symbols. Let 
\begin{align*}
F_1 &= \prod_{p \mid D_1} \frac{1}{4} \left(1 + t(p, m, \bm{\epsilon}) \left(\frac{\delta_{31} D_3 D_4}{p}\right) + t(p, m, \bm{\epsilon}) \left(\frac{\delta_{32} D_2 D_4}{p}\right) + \left(\frac{\delta_{31} \delta_{32} D_2 D_3}{p}\right)\right), \\
F_2 &= \prod_{p \mid D_2} \frac{1}{4} \left(1 + t(p, m, \bm{\epsilon}) \left(\frac{\delta_{21} D_3 D_4}{p}\right) + \left(\frac{\delta_{21} \delta_{23} D_1 D_3}{p}\right) + t(p, m, \bm{\epsilon}) \left(\frac{\delta_{23} D_1 D_4}{p}\right)\right), \\
F_3 &= \prod_{p \mid D_3} \frac{1}{4} \left(1 + \left(\frac{\delta_{12} \delta_{13} D_1 D_2}{p}\right) + t(p, m, \bm{\epsilon}) \left(\frac{\delta_{12} D_2 D_4}{p}\right) + t(p, m, \bm{\epsilon}) \left(\frac{\delta_{13} D_1 D_4}{p}\right)\right), \\
F_4 &= \prod_{p \mid D_4} \frac{1}{4} \left(1 + \left(\frac{D_1 D_2}{p}\right) + \left(\frac{D_1 D_3}{p}\right) + \left(\frac{D_2 D_3}{p}\right)\right).
\end{align*} Then the detector function of $(D_1, D_2, D_3, D_4)$ lying in the kernel of $M'_\mathbf{r}(m, \bm{\epsilon})$ is exactly $F_1 F_2 F_3 F_4$. We may for instance expand $F_1$ as
$$
F_1 = \frac{1}{4^{\omega(D_1)}} \sum_{D_1 = D_{10} D_{12} D_{13} D_{14}} \left(\frac{\delta_{31} D_3 D_4}{D_{12}}\right) \left(\frac{\delta_{32} D_2 D_4}{D_{13}}\right) \left(\frac{\delta_{31} \delta_{32} D_2 D_3}{D_{14}}\right) \times \prod_{p \mid D_{12} D_{13}} t(p, m, \bm{\epsilon}),
$$
where the sum is over all factorizations $D_1 = D_{10} D_{12} D_{13} D_{14}$. Doing this also for $F_2, F_3, F_4$ yields
$$
f_1^\ast(m) = \frac{1}{8^{\omega(m)}} \sum_{\bm{\epsilon} = (\epsilon_i)_{1 \leq i \leq \omega(m)}} \sum_{m = D_1 D_2 D_3 D_4} 
\lambda(\mathbf{D}) \prod_{1 \leq i \leq 4} \prod_{\substack{0 \leq j \leq 4 \\ i \neq j}} \prod_{k \neq i, j} \prod_{l \neq k} \left(\frac{D_{kl}}{D_{ij}}\right),
$$ 
where the $D_i$ take the shape 
\begin{align*}
&D_1 = D_{10} D_{12} D_{13} D_{14}, \ \ D_2 = D_{20} D_{21} D_{23} D_{24} \\ 
& D_3 = D_{30} D_{31} D_{32} D_{34}, \ \ D_4 = D_{40} D_{41} D_{42} D_{43},
\end{align*} 
where $\mathbf{D}$ is the vector of $D_{ij}$ and
\begin{align*}
\lambda(\mathbf{D}) &:= \lambda'(\mathbf{D}) \prod_{p \mid D_{12} D_{13} D_{21} D_{23} D_{31} D_{32}} t(p, m, \bm{\epsilon}), \\
\lambda'(\mathbf{D}) &:= \left(\frac{\delta_{31}}{D_{12} D_{14}}\right) \left(\frac{\delta_{32}}{D_{13} D_{14}}\right) 
\left(\frac{\delta_{21}}{D_{12} D_{24}}\right) \left(\frac{\delta_{23}}{D_{23} D_{24}}\right) 
\left(\frac{\delta_{12}}{D_{13} D_{34}}\right) \left(\frac{\delta_{13}}{D_{23} D_{34}}\right).
\end{align*}
Indeed, when expanding all Legendre symbols in $F_1F_2F_3F_4$, one notices that the term $(D_{kl}/D_{ij})$ appears exactly when firstly $k \neq l$ and $i \neq j$ (so $D_{kl}$ and $D_{ij}$ are defined), and additionally $k \neq i, j$. 

If $D_{12} D_{13} D_{21} D_{23} D_{31} D_{32} > 1$, we average over $\bm{\epsilon}$ to show that the sum vanishes. However, to keep the parallel between our work and \cite{HBCongruent} as much as possible, we retain the variables $D_{12}, D_{13}, D_{21}, D_{23}, D_{31}, D_{32}$. Therefore we may rewrite $f_1^\ast(m)$ as
$$
f_1^\ast(m) = \frac{1}{4^{\omega(m)}} \sum_{m = D_1 D_2 D_3 D_4} 
\lambda'(\mathbf{D}) \prod_{1 \leq i \leq 4} \prod_{\substack{0 \leq j \leq 4 \\ i \neq j}} \prod_{k \neq i, j} \prod_{l \neq k} \left(\frac{D_{kl}}{D_{ij}}\right),
$$ where the sum over $m$ is subject to $D_1 = D_{10} D_{12} D_{13} D_{14}$, $D_2 = D_{20} D_{21} D_{23} D_{24}$ and 
$$D_3 = D_{30} D_{31} D_{32} D_{34}, \ \ D_4 = D_{40} D_{41} D_{42} D_{43}, \ \ D_{12} D_{13} D_{21} D_{23} D_{31} D_{32} = 1.$$
The resulting moment is $$
\sum_{\substack{\mathbf{D} \\ \prod_{i, j} D_{ij} \leq X}} \frac{\mu(\Omega \prod_{i, j} D_{ij})^2}{4^{\omega(\prod_{i, j} D_{ij})}} \times F\left(\prod_{i, j} D_{i, j}\right) \times \lambda'(\mathbf{D}) \times \prod_{1 \leq i \leq 4} \prod_{\substack{0 \leq j \leq 4 \\ i \neq j}} \prod_{k \neq i, j} \prod_{l \neq k} \left(\frac{D_{kl}}{D_{ij}}\right).
$$
In order to prepare for the computation of the higher moments
we rewrite the above in a compact notation. The variables $D_{ij}$ will henceforth be indexed by $\mathbb{F}_2^4$ according to
\begin{alignat*}{4}
&D_{10} = D_{0001}, \quad &&D_{12} = D_{1011}, \quad &&D_{13} = D_{1001}, \quad &&D_{14} = D_{0011} \\
&D_{20} = D_{0100}, \quad &&D_{21} = D_{1110}, \quad &&D_{23} = D_{0110}, \quad &&D_{24} = D_{1100} \\
&D_{30} = D_{0101}, \quad &&D_{31} = D_{1101}, \quad &&D_{32} = D_{0111}, \quad &&D_{34} = D_{1111} \\
&D_{40} = D_{0000}, \quad &&D_{41} = D_{0010}, \quad &&D_{42} = D_{1000}, \quad &&D_{43} = D_{1010}.
\end{alignat*}
The purpose of this change of variables is that now the Jacobi symbol $(D_{kl}/D_{ij})$ occurs in the new variables $\mathbf{u}, \mathbf{v} \in \mathbb{F}_2^4$ if and only if $\psi(\mathbf{u}, \mathbf{v}) = 1$, where $\psi$ is the bilinear form
$$
\psi(\mathbf{u}, \mathbf{v}) = v_1(u_4 + v_4) + v_3(u_2 + v_2),
$$
see \cite[p. 338]{HBCongruent}. Then our weighted moment becomes
$$
\sum_{\substack{(D_\mathbf{u})_{\mathbf{u} \in \mathbb{F}_2^4} \\ \prod_\mathbf{u} D_\mathbf{u} \leq X}} \frac{\mu(\Omega \prod_\mathbf{u} D_\mathbf{u})^2}{4^{\omega(\prod_\mathbf{u} D_\mathbf{u})}} \times F\left(\prod_\mathbf{u} D_\mathbf{u}\right) \times \lambda'((D_\mathbf{u})_{\mathbf{u} \in \mathbb{F}_2^4}) \times \prod_{\mathbf{u}, \mathbf{v} \in \mathbb{F}_2^4} \left(\frac{D_\mathbf{u}}{D_\mathbf{v}}\right)^{\psi(\mathbf{u}, \mathbf{v})}
$$
in the new variables.

\subsection{Higher moments}
We distinguish cases between class groups and Selmer groups.

\subsubsection{Higher class moments}
\label{ssHigherClass}
The $k$-th moment equals
$$
\sum_{r \geq 0} \frac{1}{2^r} \sum_{\bm{\epsilon} = (\epsilon_i)_{1 \leq i \leq r}} \sum_{\substack{m \leq X \\ \omega(m) = r}} \mu(2m)^2 F(m) \left(\sum_{d \mid m} \frac{1}{2^r} \prod_{p \mid d} \left(1 + t(p, m, \bm{\epsilon}) \left(\frac{m/d}{p}\right)\right) \prod_{p \mid \frac{m}{d}} \left(1 + \left(\frac{d}{p}\right)\right)\right)^k.
$$
We rewrite this as
$$
\sum_{r \geq 0} \frac{1}{2^{r(k + 1)}} \hspace{-0.35cm} \sum_{\bm{\epsilon} = (\epsilon_i)_{1 \leq i \leq r}} \hspace{-0.2cm} \sum_{\substack{m \leq X \\ \omega(m) = r}} \mu(2m)^2 F(m) \hspace{-0.35cm} \sum_{d_1, \dots, d_k \mid m} \prod_{i = 1}^k \left(\prod_{p \mid d_i} \left(1 + t(p, m, \bm{\epsilon}) \left(\frac{m/d_i}{p}\right)\right) \prod_{p \mid \frac{m}{d_i}} \left(1 + \left(\frac{d_i}{p}\right)\right)\right).
$$
Setting $t(e, m, \bm{\epsilon}) := \prod_{p \mid e} t(p, m, \bm{\epsilon})$, we expand the products over $p$ to get
$$
\sum_{r \geq 0} \frac{1}{2^{r(k + 1)}} \sum_{\bm{\epsilon} = (\epsilon_i)_{1 \leq i \leq r}} \sum_{\substack{m \leq X \\ \omega(m) = r}} \mu(2m)^2 F(m) \sum_{d_1, \dots, d_k \mid m} \prod_{i = 1}^k 
\sum_{e_i \mid d_i} \sum_{f_i \mid \frac{m}{d_i}} t(e_i, m, \bm{\epsilon}) \left(\frac{m/d_i}{e_i}\right) \left(\frac{d_i}{f_i}\right).$$
We continue by also expanding the product over $i$ as follows:
$$
\sum_{r \geq 0} \frac{1}{2^{r(k + 1)}} \sum_{\bm{\epsilon}} \sum_{\substack{m \leq X \\ \omega(m) = r}} \mu(2m)^2 F(m) \hspace{-0.45cm} \sum_{\substack{d_{1, 1} d_{1, 2} d_{1, 3} d_{1, 4} = m \\ \vdots \\ d_{k, 1} d_{k, 2} d_{k, 3} d_{k, 4} = m}} \prod_{i = 1}^k \left(t(d_{i, 1}, m, \bm{\epsilon}) \left(\frac{d_{i, 3} d_{i, 4}}{d_{i, 1}}\right) \left(\frac{d_{i, 1} d_{i, 2}}{d_{i, 3}}\right)\right),
$$
where $e_i = d_{i, 1}, d_i/e_i = d_{i, 2}, f_i = d_{i, 3}, m/(d_if_i) = d_{i, 4}$. Biject $\mathbb{F}_2^2$ with $\{1, 2, 3, 4\}$ by sending $(0, 0)$ to $1$, $(0, 1)$ to $2$, $(1, 1)$ to $3$ and $(1, 0)$ to $4$ and write $B$ for the bijection map. For each $\mathbf{u} \in \mathbb{F}_2^{2k}$, we introduce the new variable $D_\mathbf{u}$ defined through
\[
D_\mathbf{u} := \gcd(d_{1, B(\pi_1(\mathbf{u}))}, \ldots, d_{k, B(\pi_k(\mathbf{u}))}),
\]
where $\pi_i(\mathbf{u})$ is the projection map on the $i$-th copy of $\mathbb{F}_2^2$ by viewing $\mathbb{F}_2^{2k} \cong (\mathbb{F}_2^2)^k$. For each integer $i \in \{1, \dots, 4\}$ and each $\mathbf{u} \in \mathbb{F}_2^{2k}$, we define the operator $S_i(\mathbf{u}) \in \mathbb{F}_2$ to be the parity of the number of indices $j$ such that $B(\pi_j(\mathbf{u})) = i$. We also define the forms
\[
\phi_i(\mathbf{u}, \mathbf{v}) := 
\begin{cases}
1 &\text{if } (B(\pi_i(\mathbf{u})), B(\pi_i(\mathbf{v}))) \in \{(1, 4), (3, 2)\} \\
0 &\text{otherwise,}
\end{cases}
\]
and $\phi(\mathbf{u}, \mathbf{v}) := \sum_{i = 1}^k \phi_i(\mathbf{u}, \mathbf{v})$. We also fix invertible congruence classes $\bm{a} = (a_\mathbf{u})_{\mathbf{u} \in \F_2^{2k}}$ modulo $4$ for each $D_\mathbf{u}$. Applying the triangle inequality and changing variables yields
$$
\sum_{\bm{a}} \left|\sum_{r \geq 0} \frac{1}{2^{r(k + 1)}} \sum_{\bm{\epsilon} = (\epsilon_i)_{1 \leq i \leq r}} \sum_{(D_\mathbf{u}) \in \mathcal{D}(X, k, r, \bm{a})} \prod_{\mathbf{u} \in \mathbb{F}_2^{2k}} F(D_\mathbf{u}) t\left(D_\mathbf{u}, \prod_{\mathbf{v} \in \mathbb{F}_2^{2k}} D_\mathbf{v}, \bm{\epsilon}\right)^{S_1(\mathbf{u})} \prod_{\mathbf{u}, \mathbf{v}} \left(\frac{D_\mathbf{u}}{D_\mathbf{v}}\right)^{\phi(\mathbf{u}, \mathbf{v})}\right|,
$$
where $\mathcal{D}(X, k, r, \bm{a})$ is the set of $4^k$-tuples of odd, square-free, positive and coprime integers $(D_\mathbf{u})_\mathbf{u}$, indexed by $\mathbf{u} \in \mathbb{F}_2^{2k}$, satisfying
\[
\prod_{\mathbf{u} \in \mathbb{F}_2^{2k}} D_\mathbf{u} \leq X, \quad D_\mathbf{u} \equiv a_\mathbf{u} \bmod 4, \quad \omega\left(\prod_{\mathbf{u} \in \mathbb{F}_2^{2k}} D_\mathbf{u}\right) = r.
\]
From now on we shall treat $\bm{a}$ as fixed and concentrate on the inner sum. Our aim at this stage is to utilize the averaging over $\bm{\epsilon}$. To achieve this, we pull out the remaining terms in the sum to get
$$
\sum_{r \geq 0} \frac{1}{2^{r(k + 1)}} \sum_{(D_\mathbf{u}) \in \mathcal{D}(X, k, r, \bm{a})} \left(\prod_{\mathbf{u}, \mathbf{v}} \left(\frac{D_\mathbf{u}}{D_\mathbf{v}}\right)^{\phi(\mathbf{u}, \mathbf{v})} \times \left(\sum_{\bm{\epsilon} = (\epsilon_i)_{1 \leq i \leq r}} \prod_{\mathbf{u} \in \mathbb{F}_2^{2k}} F(D_\mathbf{u}) t\left(D_\mathbf{u}, \prod_{\mathbf{v} \in \mathbb{F}_2^{2k}} D_\mathbf{v}, \bm{\epsilon}\right)^{S_1(\mathbf{u})}\right)\right).
$$
We note that the application $\bm{\epsilon} \mapsto \prod_{\mathbf{u} \in \mathbb{F}_2^{2k}} t\left(D_\mathbf{u}, \prod_{\mathbf{v} \in \mathbb{F}_2^{2k}} D_\mathbf{v}, \bm{\epsilon}\right)^{S_1(\mathbf{u})} $ is a homomorphism, and it is trivial if and only if $D_\mathbf{u} = 1$ for all $\mathbf{u}$ with $S_1(\mathbf{u}) \equiv 1 \bmod 2$. Therefore the sum becomes
\begin{align}
\label{ekMoment}
\mathcal{S}(X, k, \bm{a}) := \sum_{(D_\mathbf{u}) \in \mathcal{D}(X, k, \bm{a})} \prod_\mathbf{u} \frac{F(D_\mathbf{u})}{2^{k\omega(D_\mathbf{u})}} \prod_{\mathbf{u}, \mathbf{v}} \left(\frac{D_\mathbf{u}}{D_\mathbf{v}}\right)^{\phi(\mathbf{u}, \mathbf{v})},
\end{align}
where $\mathcal{D}(X, k, \bm{a})$ is the set of tuples of odd, square-free, positive and coprime integers $D_\mathbf{u}$, indexed by those $\mathbf{u} \in \mathbb{F}_2^{2k}$ with $S_1(\mathbf{u}) \equiv 0 \bmod 2$, satisfying
\begin{align}
\label{eClassSummation}
\prod_{\mathbf{u} \in \mathbb{F}_2^{2k}} D_\mathbf{u} \leq X, \quad D_\mathbf{u} \equiv a_\mathbf{u} \bmod 4.
\end{align}

\begin{definition}
\label{dUnlinked}
Let $\mathbf{u}, \mathbf{v} \in \mathbb{F}_2^{2k}$. We call $\mathbf{u}, \mathbf{v}$ unlinked if $\phi(\mathbf{u}, \mathbf{v}) + \phi(\mathbf{v}, \mathbf{u}) = 0$. A set $\mathcal{U} \subseteq \mathbb{F}_2^{2k}$ is called unlinked if $\phi(\mathbf{u}, \mathbf{v}) = 0$ for all $\mathbf{u}, \mathbf{v} \in \mathcal{U}$, and it is called maximally unlinked if it is a maximal unlinked set with respect to inclusion of sets.
\end{definition}

\noindent The point of this definition is that it records the presence of the Legendre symbol $(D_\mathbf{u}/D_\mathbf{v})$, where we make sure that the flipped term $(D_\mathbf{v}/D_\mathbf{u})$ does not occur in the product in equation \eqref{ekMoment}. Thus we expect oscillation coming from this Legendre symbol.

\begin{lemma}
\label{lUnlinked}
Let $\mathcal{U}$ be an unlinked set. Then we have $|\mathcal{U}| \leq 2^k$.
\end{lemma}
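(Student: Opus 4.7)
The plan is to recognize the symmetrized form $\phi(\mathbf{u}, \mathbf{v}) + \phi(\mathbf{v}, \mathbf{u})$ as the polarization of a standard hyperbolic quadratic form on $\mathbb{F}_2^{2k}$ after a linear change of coordinates, and then bound $\mathcal{U}$ via the Witt-index estimate on totally singular subspaces. First I would reparametrize each position: write $\pi_i(\mathbf{u}) = (a_u^{(i)}, b_u^{(i)})$ and set $c_u^{(i)} := a_u^{(i)} + b_u^{(i)}$, so that $\mathbf{u}$ corresponds to the pair $(\mathbf{a}_u, \mathbf{c}_u) \in \mathbb{F}_2^k \times \mathbb{F}_2^k$. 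A quick case-check on the definition of $\phi_i$ shows that the two admissible patterns $(1,4)$ and $(3,2)$ both correspond to the same condition $c_u^{(i)} = 0$, $c_v^{(i)} = 1$ and $a_u^{(i)} \neq a_v^{(i)}$; this yields the compact formula $\phi_i(\mathbf{u}, \mathbf{v}) = (1 + c_u^{(i)})\, c_v^{(i)} (a_u^{(i)} + a_v^{(i)}) \pmod 2$. Upon swapping $\mathbf{u}$ and $\mathbf{v}$ the cubic terms cancel, and summing over $i$ produces the key identity
\[
\phi(\mathbf{u}, \mathbf{v}) + \phi(\mathbf{v}, \mathbf{u}) \;=\; \langle \mathbf{a}_u + \mathbf{a}_v,\, \mathbf{c}_u + \mathbf{c}_v\rangle \;=\; q(\mathbf{u} + \mathbf{v}),
\]
where $q(\mathbf{a}, \mathbf{c}) := \sum_i a_i c_i$ is the standard hyperbolic quadratic form on $\mathbb{F}_2^{2k}$, of Witt index $k$.

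Next, assuming $\mathcal{U}$ is unlinked, both $\phi(\mathbf{u}, \mathbf{v})$ and $\phi(\mathbf{v}, \mathbf{u})$ vanish for all $\mathbf{u}, \mathbf{v} \in \mathcal{U}$, hence $q(\mathbf{u} + \mathbf{v}) = 0$. Writing $b$ for the associated bilinear form $b(\mathbf{x}, \mathbf{y}) = q(\mathbf{x}+\mathbf{y}) + q(\mathbf{x}) + q(\mathbf{y})$, this immediately gives the relation $b(\mathbf{u}, \mathbf{v}) = q(\mathbf{u}) + q(\mathbf{v})$ for all $\mathbf{u}, \mathbf{v} \in \mathcal{U}$. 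Consider $V := \mathrm{span}_{\mathbb{F}_2}(\mathcal{U} - \mathcal{U})$. Each generator $\mathbf{w} = \mathbf{u} + \mathbf{v}$ with $\mathbf{u}, \mathbf{v} \in \mathcal{U}$ satisfies $q(\mathbf{w}) = 0$, and for any two generators $\mathbf{w}_i = \mathbf{u}_i + \mathbf{v}_i$, $\mathbf{w}_j = \mathbf{u}_j + \mathbf{v}_j$, bilinearity together with the previous relation yields
\[
b(\mathbf{w}_i, \mathbf{w}_j) \;=\; b(\mathbf{u}_i, \mathbf{u}_j) + b(\mathbf{u}_i, \mathbf{v}_j) + b(\mathbf{v}_i, \mathbf{u}_j) + b(\mathbf{v}_i, \mathbf{v}_j) \;=\; 2\bigl(q(\mathbf{u}_i) + q(\mathbf{v}_i) + q(\mathbf{u}_j) + q(\mathbf{v}_j)\bigr) \;\equiv\; 0 \pmod 2.
\]
Iterating the polarization identity, $q\bigl(\sum_i \mathbf{w}_i\bigr) = \sum_i q(\mathbf{w}_i) + \sum_{i<j} b(\mathbf{w}_i, \mathbf{w}_j) = 0$, so $V$ is totally singular for $q$.

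Finally, since $q$ is the hyperbolic quadratic form on a $2k$-dimensional $\mathbb{F}_2$-space, every totally singular subspace has dimension at most $k$, which gives $|V| \leq 2^k$. Fixing any $\mathbf{u}_0 \in \mathcal{U}$ (the empty case being trivial), each $\mathbf{u} \in \mathcal{U}$ can be written as $\mathbf{u} = \mathbf{u}_0 + (\mathbf{u} + \mathbf{u}_0) \in \mathbf{u}_0 + V$, hence $|\mathcal{U}| \leq |V| \leq 2^k$. The only non-routine point is the very first step: guessing the coordinate change $b \mapsto c = a+b$ that linearizes the pattern $\{(1,4),(3,2)\}$ defining $\phi_i$. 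After that, $\phi$ itself remains stubbornly non-bilinear, but its symmetrization collapses to the polarization of an explicit hyperbolic quadratic form and the Witt bound takes over.
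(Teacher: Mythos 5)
Your proof is correct and takes essentially the same approach as the paper. The paper defines $P(\mathbf{w}) = \sum_{j = 0}^{k - 1} w_{2j + 1} (w_{2j + 1} + w_{2j + 2})$, verifies $P(\mathbf{u} + \mathbf{v}) = \phi(\mathbf{u}, \mathbf{v}) + \phi(\mathbf{v}, \mathbf{u})$, and then cites \cite[Lemma 18]{FK4}; your $q(\mathbf{a},\mathbf{c}) = \sum_i a_i c_i$ in the transformed coordinates $c = a + b$ is exactly the paper's $P$, and your passage through the span of $\mathcal{U} + \mathcal{U}$ and the Witt-index bound on totally singular subspaces is precisely what that cited lemma encapsulates. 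In short, you have unpacked the citation into a self-contained argument (and motivated the quadratic form via the $(a,c)$ coordinate change rather than writing it down by fiat), but the underlying idea --- that the symmetrized $\phi$ is the polarization of a hyperbolic quadratic form of Witt index $k$ --- is the same.
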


\begin{proof}
We define $P(\mathbf{w}) = \sum_{j = 0}^{k - 1} w_{2j + 1} (w_{2j + 1} + w_{2j + 2})$. With this definition set, we check that
\[
P(\mathbf{u} + \mathbf{v}) = \phi(\mathbf{u}, \mathbf{v}) + \phi(\mathbf{v}, \mathbf{u}).
\]
Then our lemma is a consequence of \cite[Lemma 18]{FK4}.
\end{proof}

\subsubsection{Higher Selmer moments}
We shall be brief as the manipulations are direct analogues of those in \S\ref{ssHigherClass}. In this case $\mathbb{F}_2^4$ will play the role of $\mathbb{F}_2^2$. We write $\pi_1, \dots, \pi_k$ for the projection map of $\mathbb{F}_2^{4k} \cong (\mathbb{F}_2^4)^k$ on the $i$-th copy of $\mathbb{F}_2^4$. We introduce the notations
\begin{gather*}
\varphi_i(\mathbf{u}, \mathbf{v}) := \psi(\pi_i(\mathbf{u}), \pi_i(\mathbf{v})) \\
\varphi(\mathbf{u}, \mathbf{v}) := \sum_{i = 1}^k \varphi_i(\mathbf{u}, \mathbf{v}),
\end{gather*}
and we let $S_1(\mathbf{u})$ be the number of $1 \leq i \leq k$ such that 
$$
\pi_i(\mathbf{u}) \in \{(1, 0, 1, 1), (1, 0, 0, 1), (1, 1, 1, 0), (0, 1, 1, 0), (1, 1, 0, 1), (0, 1, 1, 1)\}. 
$$
Then, after fixing congruence classes $\bm{a}$, it suffices to bound
$$
\mathcal{S}(X, k, \bm{a}) := \sum_{(D_\mathbf{u}) \in \mathcal{D}(X, k, \bm{a})} \prod_\mathbf{u} \frac{F(D_\mathbf{u})}{4^{k \omega(D_\mathbf{u})}} \prod_{\mathbf{u}, \mathbf{v} \in \mathbb{F}_2^{4k}} \left(\frac{D_\mathbf{u}}{D_\mathbf{v}}\right)^{\phi(\mathbf{u}, \mathbf{v})},
$$
where $\mathcal{D}(X, k, \bm{a})$ is the set of tuples of square-free, positive and coprime integers $D_\mathbf{u}$, indexed by those $\mathbf{u} \in \mathbb{F}_2^{4k}$ with $S_1(\mathbf{u}) \equiv 0 \bmod 2$, satisfying
\begin{align}
\label{eSelmerSummation}
\prod_{\mathbf{u} \in \mathbb{F}_2^{4k}} D_\mathbf{u} \leq X, \quad D_\mathbf{u} \equiv a_\mathbf{u} \bmod{8\Omega}, \quad \gcd(D_\mathbf{u}, 8\Omega) = 1.
\end{align}
Here we fixed an invertible congruence class $\bm{a} = (a_\mathbf{u})_{\mathbf{u} \in \F_2^{4k}}$ modulo $8 \Omega$ for each $D_\mathbf{u}$, which guarantees that $\lambda((D_\mathbf{u})_{\mathbf{u} \in \mathbb{F}_2^{4k}})$ is constant on $\mathcal{D}(X, k, \bm{a})$. The analogues of 
Definition~\ref{dUnlinked} and Lemma \ref{lUnlinked2} are:
\begin{definition}
Let $\mathbf{u}, \mathbf{v} \in \mathbb{F}_2^{4k}$. We call $\mathbf{u}, \mathbf{v}$ unlinked if $\phi(\mathbf{u}, \mathbf{v}) + \phi(\mathbf{v}, \mathbf{u}) = 0$. A set $\mathcal{U} \subseteq \mathbb{F}_2^{4k}$ is called unlinked if $\phi(\mathbf{u}, \mathbf{v}) = 0$ for all $\mathbf{u}, \mathbf{v} \in \mathcal{U}$, and it is called maximally unlinked if it is a maximal unlinked set with respect to inclusion of sets.
\end{definition}

\begin{lemma}
\label{lUnlinked2}
Let $\mathcal{U}$ be an unlinked set. Then we have $|\mathcal{U}| \leq 4^k$.
\end{lemma}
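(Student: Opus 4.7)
The plan is to imitate the proof of Lemma~\ref{lUnlinked} by exhibiting a quadratic form $P: \mathbb{F}_2^{4k} \to \mathbb{F}_2$ whose associated polarization realises the symmetrised pairing $\varphi(\mathbf{u}, \mathbf{v}) + \varphi(\mathbf{v}, \mathbf{u})$, and then invoking \cite[Lemma~18]{FK4} exactly as before. Once such a $P$ is found and shown to be non-degenerate of the right Witt index, the bound $|\mathcal{U}| \leq 4^k$ is immediate: after translating so that $\mathbf{0} \in \mathcal{U}$, the set $\mathcal{U}$ lies in $\{P = 0\}$ and any two elements pair to $0$ under the polarisation of $P$, so the $\mathbb{F}_2$-span of $\mathcal{U}$ is totally isotropic; the bound on maximal totally isotropic subspaces for $P$ then yields the result.

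The first step is an explicit computation at the level of one factor $\mathbb{F}_2^4$. Recall that $\psi(\mathbf{u}, \mathbf{v}) = v_1(u_4 + v_4) + v_3(u_2 + v_2)$. A direct expansion gives
\[
\psi(\mathbf{u},\mathbf{v}) + \psi(\mathbf{v},\mathbf{u}) = (u_1 u_4 + v_1 v_4) + (u_2 u_3 + v_2 v_3) + (u_1 v_4 + u_4 v_1) + (u_2 v_3 + u_3 v_2),
\]
which is precisely $Q(\mathbf{u} + \mathbf{v})$ for the quadratic form $Q(\mathbf{w}) := w_1 w_4 + w_2 w_3$ on $\mathbb{F}_2^4$. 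Summing over the $k$ copies, I would then define
\[
P(\mathbf{w}) := \sum_{j=0}^{k-1} \bigl(w_{4j+1} w_{4j+4} + w_{4j+2} w_{4j+3}\bigr),
\]
so that $P(\mathbf{u} + \mathbf{v}) = \varphi(\mathbf{u}, \mathbf{v}) + \varphi(\mathbf{v}, \mathbf{u})$ on $\mathbb{F}_2^{4k}$.

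The second step is to feed $P$ into \cite[Lemma~18]{FK4} in place of the quadratic form used in the proof of Lemma~\ref{lUnlinked}. The key input required by that lemma is that $P$ is a non-degenerate hyperbolic form; this is clear since $Q$ is the standard hyperbolic form $w_1 w_4 + w_2 w_3$ of Witt index $2$ on $\mathbb{F}_2^4$, so the orthogonal direct sum $P$ has Witt index $2k$ on $\mathbb{F}_2^{4k}$. Consequently the maximal totally isotropic subspaces for $P$ have dimension $2k$, which furnishes the bound $|\mathcal{U}| \leq 2^{2k} = 4^k$.

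I do not foresee a genuine obstacle here: the only point that warrants a moment of care is the verification of the polarisation identity and the identification of $Q$ as a hyperbolic form of maximal Witt index, since the jump from $2^k$ (appearing in Lemma~\ref{lUnlinked}, where each factor contributed one hyperbolic plane) to $4^k$ (here, where each factor contributes two) has to be tracked correctly. After that, the cited lemma does all the remaining work.
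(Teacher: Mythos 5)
Your computation is correct: expanding $\psi(\mathbf{u},\mathbf{v}) + \psi(\mathbf{v},\mathbf{u})$ from $\psi(\mathbf{u}, \mathbf{v}) = v_1(u_4 + v_4) + v_3(u_2 + v_2)$ does indeed give exactly $(u_1+v_1)(u_4+v_4) + (u_2+v_2)(u_3+v_3) = Q(\mathbf{u}+\mathbf{v})$ with $Q(\mathbf{w}) = w_1 w_4 + w_2 w_3$, and $Q$ is the split form of Witt index $2$ on $\mathbb{F}_2^4$, so the orthogonal sum $P$ has Witt index $2k$ on $\mathbb{F}_2^{4k}$. The translation--span--isotropy argument you sketch (translate so $\mathbf{0} \in \mathcal{U}$, observe $P$ vanishes on $\mathcal{U}$ and on pairwise sums, conclude the $\mathbb{F}_2$-span is totally isotropic of dimension at most the Witt index $2k$) is the right mechanism and yields the bound $|\mathcal{U}| \le 2^{2k} = 4^k$.

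The paper, however, takes a much shorter route: it simply cites Heath-Brown's Lemma~7 in~\cite{HBCongruent}, which proves precisely this bound for the pairing $\psi$ on $\mathbb{F}_2^{4k}$. So your proposal is a genuinely different, more self-contained route: you mimic the argument used for Lemma~\ref{lUnlinked} (the class-group case) by building a quadratic-form witness and appealing to \cite[Lemma~18]{FK4}, whereas the paper outsources the whole statement to Heath-Brown. This unification is aesthetically pleasing and makes the parallel between the two cases explicit. The one caveat worth flagging: \cite[Lemma~18]{FK4} is stated and proved in the $\mathbb{F}_2^{2k}$ setting with their specific form $\sum_j w_{2j+1}(w_{2j+1}+w_{2j+2})$; if you want to cite it verbatim rather than re-derive the isotropy bound, you should check that its hypotheses are phrased loosely enough to absorb a different non-degenerate form on a different ambient space. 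If not, the two or three lines of linear algebra you outlined (span of a translate is totally isotropic, hence has dimension at most the Witt index) are an easy substitute, and this is essentially what both Heath-Brown and Fouvry--Kl\"uners do under the hood.
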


\begin{proof}
This is \cite[Lemma 7]{HBCongruent}.
\end{proof}

\subsection{Bounds for character sums}
Recall Definition \ref{dSuitableh}. Since $F$ is treated as fixed for us, we make once and for all a valid choice of $L$, $\alpha$ and $C(\alpha)$ as in Definition \ref{dSuitableh}, and allow all our implied constants to implicitly depend on the aforementioned choices.

\begin{terminology}
Let $A_1 > 0$ be a sufficiently small real number and $A_2 > 0$ be a sufficiently large real number, both to be chosen later in terms of $k$ only. 
We say that an integer $m$ is 
\begin{itemize}
\item large if $m > \exp\left((\log X)^{A_1}\right)$,
\item medium if $m> (\log X)^{A_2}$,
\item active if $m> 1$, $m \not \in E$ and $4m \not \in E$.
\end{itemize}
\end{terminology}

To allow for a uniform notation between the class group and Selmer group cases, we set 
 $M := \mathbb{F}_2^2$ in the former case and $M := \mathbb{F}_2^4$ in the latter. 
We let $b$ stand for the dimension of $M$ and set $$
\mathcal{S}(X, k, \bm{a}) := \sum_{(D_\mathbf{u}) \in \mathcal{D}(X, k, \bm{a})} \prod_\mathbf{u} \frac{F(D_\mathbf{u})}{b^{k \omega(D_\mathbf{u})}} \prod_{\mathbf{u}, \mathbf{v} \in M^k} \left(\frac{D_\mathbf{u}}{D_\mathbf{v}}\right)^{\phi(\mathbf{u}, \mathbf{v})},
$$
where $\mathbf{u}$ runs over all indices in $M^k$ with $S_1(\mathbf{u}) \equiv 0 \bmod 2$, and where we impose the summation conditions \eqref{eClassSummation} in the class group case and \eqref{eSelmerSummation} in the Selmer group case.

At this stage we partition $\mathcal{S}(X, k, \bm{a})$ into various pieces according to the sizes of the variables. As a first step, we define $\mathcal{S}_{\text{sm}}(X, k, \bm{a})$ to be the contribution to $\mathcal{S}_{\text{sm}}(X, k, \bm{a})$ for which there exist at most $b^k - 1$ large variables $D_\mathbf{u}$. The next lemma disposes of the contribution from $\mathcal{S}_{\text{sm}}(X, k, \bm{a})$ by showing that it is negligible.

\begin{lemma}
\label{lsm}
There exists some constant $c > 0$, depending only on $k,L$ and $\alpha$, such that
\[
\mathcal{S}_{\textup{sm}}(X, k, \bm{a}) \ll_k \frac{X}{(\log X)^{1+c}} \prod_{p \leq X} \left(1 + \frac{F(p)}{p}\right).
\]
\end{lemma}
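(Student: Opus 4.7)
The plan is to trivially bound every Jacobi symbol $(D_{\mathbf{u}}/D_{\mathbf{v}})^{\phi(\mathbf{u}, \mathbf{v})}$ by $1$ — which is valid here because we only need an upper bound and the resulting summand is non-negative — and then to exploit the crucial fact that strictly fewer than $b^k$ of the variables are large. By a union bound over the $O_k(1)$ subsets $S$ of large-variable indices with $|S| \leq b^k - 1$, it suffices to bound the contribution from one such fixed $S$; call $L$ the complementary set of restricted indices, each variable $D_\mathbf{u}$ with $\mathbf{u} \in L$ thus satisfying $D_\mathbf{u} \leq \exp((\log X)^{A_1})$. Note that this step uses only parts (i) and (ii) of Definition~\ref{dSuitableh}; the Siegel--Walfisz hypothesis (iii) is not needed because we have discarded all characters.

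First, treating the variables in $L$ as fixed with product $y := \prod_{\mathbf{u} \in L} D_\mathbf{u}$, I would collapse the large-variable sum via the substitution $d = \prod_{\mathbf{u} \in S} D_\mathbf{u}$. A squarefree $d$ admits exactly $|S|^{\omega(d)}$ ordered coprime factorizations indexed by $S$, so the inner sum is bounded above by $\sum_{d \leq X/y} F(d) \gamma^{\omega(d)}$ with $\gamma := |S|/b^k$. The key observation is that $\gamma \leq 1 - b^{-k}$ is bounded strictly away from $1$. Applying Shiu's theorem to the multiplicative function $F \mu^2 \gamma^{\omega}$, which is dominated by $(2^L)^{\Omega(n)}$ thanks to Definition~\ref{dSuitableh}(i), yields a bound of the shape
\[
\sum_{d \leq X/y} F(d) \gamma^{\omega(d)} \ll \frac{X/y}{\log(X/y)} \prod_{p \leq X/y}\!\left(1 + \frac{\gamma F(p)}{p}\right).
\]
Since $y \leq \exp(O((\log X)^{A_1}))$ and $A_1 < 1$, both $\log(X/y) \asymp \log X$ and the Euler product above are comparable to their $X$-analogues up to absolute constants. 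The crucial comparison $\prod_{p \leq X}(1 + \gamma F(p)/p) \ll (\log X)^{-c_1} \prod_{p \leq X}(1 + F(p)/p)$ for some $c_1 = c_1(\alpha, L, k)$ of order $\alpha b^{-k} > 0$ then follows from the elementary inequality $1+\gamma F(p)/p \leq (1+F(p)/p) e^{-(1-\gamma)F(p)/(p+F(p))}$ combined with the lower bound $\sum_{p \leq X} F(p)/p \geq \alpha \log\log X + O(1)$ forced by Definition~\ref{dSuitableh}(ii) together with $F(p) \leq 2^L$.

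Finally, summing over the restricted variables in $L$ is straightforward: by $F(D) \leq \tau(D)^L$ and Mertens,
\[
\sum_{D \leq \exp((\log X)^{A_1})} \frac{F(D) b^{-k\omega(D)}}{D} \ll (\log X)^{C_0 A_1}
\]
for some $C_0 = C_0(k, L)$, and multiplying over the $|L|$ restricted variables incurs a total loss of $(\log X)^{C_0 |L| A_1}$. Choosing $A_1$ small enough in terms of $k$, $L$, $\alpha$ so that $c_1 > C_0 |L| A_1$ delivers the required $c := c_1 - C_0 |L| A_1 > 0$. The main technical obstacle will be to justify Shiu's theorem uniformly in $y$ in the presence of the coprimality to $y$ and the residue class conditions modulo $4$ or $8\Omega$ imposed on the $D_{\mathbf{u}}$; these are standard modifications, but care is needed to check that the auxiliary factors they introduce depend only on $k, L, \alpha$ and hence can be absorbed. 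Crucially, the final choice of $A_1$ is independent of $X$, precisely as the notion of large/small variables requires.
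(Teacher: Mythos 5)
Your proposal is correct and follows essentially the same route as the paper: both drop the Jacobi symbols, split the indices into at most $b^k-1$ large variables and the rest small, bound the collapsed large-variable sum by a mean-value estimate for multiplicative functions (you invoke Shiu, the paper uses Montgomery--Vaughan Corollary~2.15 --- these are interchangeable here), and then win by comparing $\prod_{p\le X}(1+\gamma F(p)/p)$ against $\prod_{p\le X}(1+F(p)/p)$ using Definition~\ref{dSuitableh}(ii), absorbing the $(\log X)^{O(A_1)}$ loss from the small variables by a final small choice of $A_1$. One small remark: the concern at the end about imposing coprimality-to-$y$ and residue classes in Shiu's theorem is unnecessary, since for an upper bound one simply drops these constraints by non-negativity (as the paper does), so no uniformity issue arises.
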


\begin{proof}
Let $\mathcal{L}$ be any subset of $M^k$ of cardinality $|\mathcal{L}| = r \leq b^k - 1$. Since the number of choices for $\mathcal{L}$ is bounded in terms of $k$ only, it suffices to bound the contribution to $\mathcal{S}_{\text{sm}}(X, k, \bm{a})$, where we demand that $D_\mathbf{u}$ is large if and only if $\mathbf{u} \in \mathcal{L}$. We write $n$ for the product of those $D_\mathbf{u}$ with $\mathbf{u} \in \mathcal{L}$, and we write $m$ for the product of the remaining $D_\mathbf{u}$. Therefore we obtain the bound
\[
\mathcal{S}_{\textup{sm}}(X, k, \bm{a}) \ll_k \sum_{m \leq \exp\left((b^{2k} - r)(\log X)^{A_1}\right)} 
\frac{\mu(m)^2 F(m) \tau_{b^{2k} - r}(m)}{ b^{k\omega(m)} }
\sum_{n \leq X/m} \frac{\mu(n)^2 F(n) \tau_r(n)}{b^{k\omega(n)}}.
\]
The inner sum may be bounded by \cite[Corollary 2.15]{MV}. Feeding this in, we get
$$ 
\ll_k \frac{X}{\log X} \prod_{p \leq X} \left(1 + \frac{r b^{-k} F(p)}{p}\right) \sum_{m \leq \exp\left((b^{2k} - r)(\log X)^{A_1}\right)} 
\frac{\mu(m)^2 F(m) \tau_{b^{2k} - r}(m) }{ mb^{k\omega(m)}}.
$$
Bounding the harmonic sum by the corresponding Euler product yields the estimate
$$
\ll_k \frac{X}{\log X} \prod_{p \leq X} \left(1 + \frac{ F(p)}{p}\right)^{r b^{-k}}
\prod_{p \leq \exp\left((b^{2k} - r)(\log X)^{A_1}\right)} \left(1 + \frac{(b^{2k} - r) b^{-k} F(p)}{p}\right).
$$
Setting $\zeta=A_12^L(b^k-rb^{-k})$ we use the assumption $F(p)\leq 2^L$ to see that the second product is $\ll (\log X)^\zeta$. Let us introduce the strictly positive constant $\epsilon':=1-rb^{-k}$. We get 
$$
\ll_k 
\bigg\{ 
(\log X)^{\zeta}
\prod_{p \leq X} \left(1 + \frac{ F(p)}{p}\right)^{-\epsilon}
\bigg\} \frac{X}{\log X} 
\prod_{p \leq X} \left(1 + \frac{ F(p)}{p}\right)
$$ 
and note that the quantity inside the brackets $\{\}$ is $\ll (\log X)^{\zeta-\epsilon \alpha}$ by \eqref{eq:Calpha}. Upon taking $A_1$ sufficiently small in terms of $\alpha,k$ and $L$ ensures that $\zeta-\epsilon \alpha<0$, thus concluding the proof.
\end{proof}

Denote the contribution to $\mathcal{S}(X, k, \bm{a})$ for which there exist linked indices $\mathbf{u}, \mathbf{v}$ such that $D_\mathbf{u}$ and $D_\mathbf{v}$ are medium as $\mathcal{S}_{\text{LS}}(X, k, \bm{a})$. Similarly, we let $\mathcal{S}_{\text{SW}}(X, k, \bm{a})$ be the contribution for which 
\begin{itemize}
\item if $\mathbf{u}$, $\mathbf{v}$ are linked, then $D_\mathbf{u}$ or $D_\mathbf{v}$ is not medium,
\item there exist linked indices $\mathbf{u}, \mathbf{v}$ such that $D_\mathbf{u}$ is large and $D_\mathbf{v}$ is active.
\end{itemize}

\begin{lemma}
\label{lLS}
We have $\mathcal{S}_{\textup{LS}}(X, k, \bm{a}) \ll_k X(\log X)^{-100}$.
\end{lemma}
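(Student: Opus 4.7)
The plan is to extract cancellation from the Jacobi symbol between two linked medium variables via the double large sieve result of Corollary~\ref{cLargeSieve}. Since $M^k$ has cardinality bounded in terms of $k$ alone, I would first fix an ordered pair of linked indices $(\mathbf{u}_0, \mathbf{v}_0) \in M^k \times M^k$ with $S_1(\mathbf{u}_0) \equiv S_1(\mathbf{v}_0) \equiv 0 \bmod 2$ and restrict attention to the contribution in which both $D_{\mathbf{u}_0}$ and $D_{\mathbf{v}_0}$ are medium, i.e.\ exceed $(\log X)^{A_2}$. The full $\mathcal{S}_{\textup{LS}}(X,k,\bm a)$ is bounded by a sum of $O_k(1)$ such contributions, so it suffices to bound each separately.

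Since $(\mathbf{u}_0, \mathbf{v}_0)$ are linked, exactly one of $\phi(\mathbf{u}_0, \mathbf{v}_0), \phi(\mathbf{v}_0, \mathbf{u}_0)$ equals $1$; after possibly swapping the roles of the two variables, we may assume $\phi(\mathbf{u}_0, \mathbf{v}_0) = 1$. I would set $m_1 := D_{\mathbf{u}_0}$, $m_2 := D_{\mathbf{v}_0}$ and let $m_3, \dots, m_r$ enumerate the remaining $D_\mathbf{w}$. The product $\prod_{\mathbf{u},\mathbf{v}} (D_\mathbf{u}/D_\mathbf{v})^{\phi(\mathbf{u},\mathbf{v})}$ contains a single factor involving both $m_1$ and $m_2$, namely $(m_1/m_2)$. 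All other Legendre symbols split into those depending on $m_1, m_3, \dots, m_r$ (absorbed into $\alpha(m_1, m_3, \dots, m_r)$) and those depending on $m_2, m_3, \dots, m_r$ (absorbed into $\beta(m_2, m_3, \dots, m_r)$). I would likewise absorb into these coefficients the multiplicative weight $\prod_\mathbf{u} F(D_\mathbf{u})/b^{k\omega(D_\mathbf{u})}$, the congruence indicators from $\bm a$, the pairwise coprimality indicators not involving the linked pair (coprimality between $m_1$ and $m_2$ is automatic since any shared prime kills the Jacobi symbol), and the restriction that the two chosen variables are medium while the remaining ones respect the defining constraints of $\mathcal{S}_{\textup{LS}}$. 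By Definition~\ref{dSuitableh}(i) we have $F(D_\mathbf{u}) \leq \tau(D_\mathbf{u})^L$, so after this packaging $\alpha, \beta$ are supported on odd square-frees and satisfy a divisor-function bound $\tau(\cdot)^s \cdots \tau(\cdot)^s$ with $s$ depending only on $L$ and $k$. The constraint $m_1 \cdots m_r \leq X$ is exactly the constraint in $\mathcal{D}(X, k, \bm a)$. Applying Corollary~\ref{cLargeSieve} with $z = (\log X)^{A_2}$ then yields
\[
\mathcal{S}_{\textup{LS}}(X, k, \bm a) \ll_k \frac{X(\log X)^{C_0}}{(\log X)^{A_2/20}}
\]
for some $C_0$ depending only on $k$ and $L$. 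Choosing $A_2 > 20(C_0 + 100)$ completes the proof.

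The main obstacle I anticipate is the bookkeeping needed to guarantee that the coefficients $\alpha$ and $\beta$ really are divisor-bounded after absorbing all the combinatorial data: one must check that the split of the product of Jacobi symbols is compatible with the variable structure of $\alpha, \beta$ (each symbol $(D_\mathbf{u}/D_\mathbf{v})$ with $\{\mathbf{u},\mathbf{v}\} \neq \{\mathbf{u}_0, \mathbf{v}_0\}$ depends on at most one of $m_1, m_2$, allowing a clean assignment to $\alpha$ or $\beta$), and that $F(D_\mathbf{u}) \leq \tau(D_\mathbf{u})^L$ controls the multiplicative weight. A second, minor subtlety is the directionality of the Jacobi symbol: if $\phi(\mathbf{v}_0, \mathbf{u}_0) = 1$ instead, one simply swaps the labels of $m_1, m_2$, since the congruence conditions in $\bm a$ are symmetric in the two variables (or one invokes quadratic reciprocity, whose twist is a fixed sign once $\bm a$ is fixed modulo $4$, resp. $8\Omega$).
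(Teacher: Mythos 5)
Your proof is correct and follows the same strategy as the paper: a union bound over linked pairs followed by an application of Corollary~\ref{cLargeSieve} with $z=(\log X)^{A_2}$, $s=L$, and $A_2$ large in terms of $k$ and $L$. Your write-up actually matches the definition of $\mathcal{S}_{\textup{LS}}$ more precisely than the paper's own proof (which says ``$D_\mathbf{u}$ is large'' when only mediumness is needed or used), and the bookkeeping you spell out --- that every Jacobi symbol and coprimality indicator other than $(D_{\mathbf{u}_0}/D_{\mathbf{v}_0})$ involves at most one of $m_1,m_2$ so the factorization into $\alpha(m_1,m_3,\dots,m_r)$ and $\beta(m_2,m_3,\dots,m_r)$ is clean, that $F(D_\mathbf{u})\leq\tau(D_\mathbf{u})^L$ controls the weight, and that $\gcd(m_1,m_2)>1$ is automatically killed by the Jacobi symbol --- is exactly what the paper leaves implicit.
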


\begin{proof}
By the union bound, we may fix two linked indices $\mathbf{u}$ and $\mathbf{v}$ such that $D_\mathbf{u}$ is large and $D_\mathbf{v}$ is medium. 
This can be dealt with directly from Corollary \ref{cLargeSieve} with $z=(\log X)^{A_2}$, $s=L$ and $r=b^{2k}$.
This gives the stated bound upon choosing $A_2$ sufficiently large in terms of $b,k$ and $L$.
\end{proof}

\begin{lemma}
\label{lSW}
We have
 $
\mathcal{S}_{\textup{SW}}(X, k, \bm{a}) \ll_k X(\log X)^{-100}
 $.
\end{lemma}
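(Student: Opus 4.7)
The strategy is to extract a quadratic character of the large variable $D_{\mathbf{u}}$ whose conductor is controlled by the (non-medium) linked variables, and then invoke the Siegel--Walfisz condition of Definition~\ref{dSuitableh}(iii) through the Beyond \texttt{LSD} machinery of Theorem~\ref{tKou}. By a union bound over the $O_k(1)$ possible choices, I fix a linked pair $(\mathbf{u}, \mathbf{u}')$ realising the defining hypothesis of $\mathcal{S}_{\textup{SW}}$, namely $D_{\mathbf{u}}$ large and $D_{\mathbf{u}'}$ active. Let $\mathcal{W} := \{\mathbf{w} \neq \mathbf{u} : \phi(\mathbf{u}, \mathbf{w}) + \phi(\mathbf{w}, \mathbf{u}) \equiv 1 \pmod{2}\}$ collect the indices linked to $\mathbf{u}$, so $\mathbf{u}' \in \mathcal{W}$; the first clause of the hypothesis then forces $D_{\mathbf{w}} \leq (\log X)^{A_2}$ for every $\mathbf{w} \in \mathcal{W}$.

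Fix the tuple $(D_{\mathbf{w}})_{\mathbf{w} \neq \mathbf{u}}$. Quadratic reciprocity, combined with the prescribed class $\bm{a}$, collapses the Jacobi symbol product involving $D_{\mathbf{u}}$ into $\epsilon \cdot \chi(D_{\mathbf{u}})$, where $\epsilon = \pm 1$ depends only on $\bm{a}$ and $\chi := \prod_{\mathbf{w} \in \mathcal{W}}(D_{\mathbf{w}}/\,\cdot\,)$ is a Kronecker symbol whose primitive inducer has conductor $q$ dividing $4\prod_{\mathbf{w} \in \mathcal{W}} D_{\mathbf{w}}$, hence $q \leq 4(\log X)^{|\mathcal{W}| A_2}$. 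Since the $D_{\mathbf{w}}$ are pairwise coprime and $D_{\mathbf{u}'} > 1$, the inducer is non-principal. Partial summation applied to Definition~\ref{dSuitableh}(iii) gives, for any fixed $A > 1$ and whenever $q \notin E$,
\[
\sum_{p \leq y} F(p)\chi(p)\log p \ll_A y(\log y)^{-A},
\]
so the multiplicative function $f(n) := F(n)\mu(n)^2\chi(n)/b^{k\omega(n)}$, majorised by $\tau(n)^L$ via Definition~\ref{dSuitableh}(i), fulfils the hypothesis of Theorem~\ref{tKou}. With $M := \prod_{\mathbf{w} \neq \mathbf{u}} D_{\mathbf{w}}$ and $y := X/M$, the largeness of $D_{\mathbf{u}}$ forces $\log y \geq (\log X)^{A_1}$, which dwarfs $(\log q)^{1+\varepsilon} \ll (\log \log X)^{1+\varepsilon}$; since $\chi$ is non-principal its Dirichlet exponent $\alpha$ vanishes, and Theorem~\ref{tKou} with $J$ chosen so that $A_1(J+1) \geq 200$ yields
\[
\sum_{D_{\mathbf{u}} \leq y} f(D_{\mathbf{u}}) \ll \frac{y(\log\log X)^{O_k(1)}}{(\log y)^{J+1}} \ll \frac{X}{M(\log X)^{200}}.
\]

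Summing this inner estimate over the residual variables $(D_{\mathbf{w}})_{\mathbf{w} \neq \mathbf{u}}$ produces, for each index, a convergent Euler product $\sum_D \tau(D)^L \mu(D)^2/(D b^{k\omega(D)}) \ll (\log X)^{O_k(1)}$, whose aggregate is absorbed into the $(\log X)^{-200}$ saving to deliver the desired bound $\ll_k X(\log X)^{-100}$. The principal technical obstacle is the handful of exceptional configurations with $q \in E$: finiteness of $E$ forces $\prod_{\mathbf{w} \in \mathcal{W}} D_{\mathbf{w}}$ into a bounded list, trapping every linked variable in a finite range. I plan to handle these branches separately by leveraging the activity of $D_{\mathbf{u}'}$ (so that $D_{\mathbf{u}'}, 4D_{\mathbf{u}'} \notin E$) together with coprimality of the $D_{\mathbf{w}}$, reducing each such branch either to an auxiliary application of Lemma~\ref{lLS} for a secondary linked pair among the now-constrained variables or to a direct bound exploiting the rigidity of the forced factorisation.
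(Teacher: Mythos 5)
Your proof takes the same route as the paper: by a union bound fix a linked pair $(\mathbf{u}, \mathbf{u}')$ with $D_\mathbf{u}$ large and $D_{\mathbf{u}'}$ active, isolate $D_\mathbf{u}$, collapse the linked small variables' Jacobi symbols via reciprocity and the fixed class $\bm{a}$ into a single quadratic character $\chi$ of conductor $q$ bounded by a power of $\log X$, and apply Theorem~\ref{tKou} through the Siegel--Walfisz hypothesis (iii). You are right to single out the case $q \in E$ as the one genuine subtlety here (the paper passes over it rather quickly, only checking that the individual conductors $D_{\mathbf{u}'}$ and $4D_{\mathbf{u}'}$ avoid $E$, not the full conductor $q$). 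However, neither of your two proposed escape routes for $q \in E$ works as stated. In such a configuration every linked $D_\mathbf{w}$ satisfies $D_\mathbf{w} \leq \max E = O(1)$, hence is in particular not medium, so Lemma~\ref{lLS} --- which requires two linked \emph{medium} variables --- can never be invoked; and ``rigidity of the forced factorisation'' is not a concrete argument and cannot by itself supply a $(\log X)^{-100}$ saving, because fixing the bounded small variables and then summing the large ones trivially only reproduces the main term.

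The correct fix is simpler and should be stated at the outset: one may without loss of generality replace $E$ by its divisor closure $\{d \geq 1 : d \mid q_0 \text{ for some } q_0 \in E\}$. This set is still finite, and enlarging $E$ only makes condition (iii) of Definition~\ref{dSuitableh} easier to satisfy, so the hypothesis that $F$ is appropri\'ee is preserved. Since the $D_\mathbf{w}$ with $\mathbf{w}\in\mathcal{W}$ are odd, squarefree, and pairwise coprime, the odd part of $q$ is precisely $\prod_{\mathbf{w}\in\mathcal{W}, D_\mathbf{w}>1} D_\mathbf{w}$, so $D_{\mathbf{u}'}\mid q$. With $E$ divisor-closed, $D_{\mathbf{u}'}\notin E$ (by activity) immediately forces $q\notin E$, and the exceptional branch you were worried about disappears. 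With that normalization, the rest of your plan --- partial summation from \eqref{eSW}, checking the $x \geq e^{(\log Q)^{1+\epsilon}}$ threshold via the largeness of $D_\mathbf{u}$, taking $J$ large, and Euler-product bounds for the residual variables (whose logarithmic contributions you should fold in when fixing $A_1(J+1)$, not just demand $A_1(J+1) \geq 200$) --- is sound and matches the paper.
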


\begin{proof}
By the union bound, we may fix two linked indices $\mathbf{u}$ and $\mathbf{v}$ such that $D_\mathbf{u}$ is large and $D_\mathbf{v}$ is active. Furthermore, if $\mathbf{u}$ and $\mathbf{v}$ are linked, then $D_\mathbf{v}$ is not medium. We now isolate the variable $D_\mathbf{u}$ by applying the triangle inequality. We apply Theorem \ref{tKou} to the resulting inner sum. To check that this application of Theorem \ref{tKou} is permitted, we need to verify that \eqref{eOnPrimes} holds. We claim that this follows from assumption \eqref{eSW} (for a large choice of $A$ in terms of $k$ and $L$) and the definition of active. 

Indeed, the character $(\cdot/D_\mathbf{v})$ has conductor $D_\mathbf{v}$, so this character is not in $E$ by definition of active. The symbol $(D_\mathbf{v}/\cdot)$ is not a Dirichlet character, but when restricted to odd positive arguments, it is equal to a Dirichlet character of conductor $4D_\mathbf{v}$, which is also not in $E$ by definition of active. The resulting Dirichlet characters are also readily verified to be non-principal, quadratic and primitive for any odd, square-free integer $D_\mathbf{v} > 1$. Note that the total conductor is indeed bounded by a power of $\log X$, since all variables $D_\mathbf{v}$ with $\mathbf{v}$ linked to $\mathbf{u}$ are not medium. We take $\epsilon = 1/2$, $Q := \exp((\log X)^{A'})$ for some very small $A' > 0$ in terms of $k$ and $J$ sufficiently large in terms of $k$.

Summing trivially over all the other variables as in the proof of the previous lemma gives the stated bound.
\end{proof}

\begin{theorem}
\label{tMoments}
Let $k \in \Z_{\geq 1}$, $\bm{a} = (a_\mathbf{u})_{\mathbf{u} \in M^k}$ and assume $F$ is appropri\'ee.
Then 
$$
\mathcal{S}(X, k, \bm{a}) \ll_k \frac{X}{\log X} \prod_{p \leq X} \left(1 + \frac{F(p)}{p}\right).
$$
\end{theorem}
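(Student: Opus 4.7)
Decompose $\mathcal{S}(X,k,\bm a) = \mathcal{S}_{\mathrm{sm}}(X,k,\bm a) + \mathcal{S}_{\mathrm{LS}}(X,k,\bm a) + \mathcal{S}_{\mathrm{SW}}(X,k,\bm a) + \mathcal{S}_{\mathrm{main}}(X,k,\bm a)$, where $\mathcal{S}_{\mathrm{main}}$ collects those tuples $(D_\mathbf{u})$ for which (a) at least $b^k$ of the $D_\mathbf{u}$ are large, (b) every linked pair of indices has at least one non-medium variable, and (c) no linked pair $(\mathbf{u},\mathbf{v})$ has $D_\mathbf{u}$ large and $D_\mathbf{v}$ active. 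By Lemmas \ref{lsm}, \ref{lLS} and \ref{lSW} the first three pieces are already bounded by $X(\log X)^{-1-c}\prod_{p\leq X}(1+F(p)/p)$ or better, so the task reduces to proving the same bound for $\mathcal{S}_{\mathrm{main}}(X,k,\bm a)$.

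Choosing $A_1,A_2$ so that large implies medium for all sufficiently large $X$, condition (b) forces every pair of large indices $\mathbf{u},\mathbf{v}$ to satisfy $\phi(\mathbf{u},\mathbf{v})+\phi(\mathbf{v},\mathbf{u})=0$. The argument in Lemma \ref{lUnlinked} (resp.\ \ref{lUnlinked2}), via the quadratic form $P$ with $P(\mathbf{u}-\mathbf{v})=\phi(\mathbf{u},\mathbf{v})+\phi(\mathbf{v},\mathbf{u})$, bounds the size of any such pair-wise unlinked set by $b^k$; combined with (a) the set $\mathcal{L}$ of large indices has cardinality exactly $b^k$. For $\mathbf{u},\mathbf{v}\in\mathcal{L}$ the Jacobi-symbol contribution $(D_\mathbf{u}/D_\mathbf{v})^{\phi(\mathbf{u},\mathbf{v})}(D_\mathbf{v}/D_\mathbf{u})^{\phi(\mathbf{v},\mathbf{u})}$ has equal exponents, so quadratic reciprocity collapses it to a sign depending only on $D_\mathbf{u},D_\mathbf{v}\bmod 4$ in the class-group case (or $\bmod\,8\Omega$ in the Selmer case), both of which are fixed by $\bm a$. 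Hence the symbols internal to $\mathcal{L}$ produce a single constant $C(\bm a)\in\{\pm 1\}$. If $\mathbf{u}\in\mathcal{L}$ and $\mathbf{v}\notin\mathcal{L}$ are linked, condition (c) forces $D_\mathbf{v}$ to be non-active, so $D_\mathbf{v}=1$ or $D_\mathbf{v}$ (or $4D_\mathbf{v}$) lies in the finite exceptional set, and summing $F(D_\mathbf{v})\tau(D_\mathbf{v})^L$ over such $D_\mathbf{v}$ yields an $O_F(1)$ factor. The remaining non-large variables are all bounded by $\exp((\log X)^{A_1})$ and only couple amongst themselves, so their contribution is at most $(\log X)^{O(1)}$, which is absorbed by the power savings already obtained.

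Consequently $\mathcal{S}_{\mathrm{main}}(X,k,\bm a)$ is majorized, up to $O_F(1)$ factors, by the ``diagonal'' sum
\[
\sum_{\substack{(D_\mathbf{u})_{\mathbf{u}\in\mathcal{L}} \\ \prod_\mathbf{u} D_\mathbf{u}\leq X,\ \text{coprime}\\ D_\mathbf{u}\equiv a_\mathbf{u}\bmod q}} \prod_{\mathbf{u}\in\mathcal{L}}\frac{F(D_\mathbf{u})}{b^{k\omega(D_\mathbf{u})}}.
\]
Writing $m=\prod_{\mathbf{u}\in\mathcal{L}}D_\mathbf{u}$, the number of ordered coprime factorizations of squarefree $m$ into $|\mathcal{L}|=b^k$ parts is $(b^k)^{\omega(m)}$ (the congruence classes only reduce the count), which cancels the weight $b^{-k\omega(m)}$ exactly. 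We thus reduce to bounding $\sum_{m\leq X}\mu(m)^2 F(m)$, which by Theorem \ref{tKou} is $\ll \tfrac{X}{\log X}\prod_{p\leq X}(1+F(p)/p)$, as desired.

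The principal obstacle is the combinatorial bookkeeping that converts conditions (a)--(c) into the clean statement ``$|\mathcal{L}|=b^k$ and the internal Jacobi symbols collapse to a constant''. The two delicate ingredients are the observation that the proof of Lemma \ref{lUnlinked}/\ref{lUnlinked2} actually controls pair-wise (not merely set-) unlinked families, and the use of quadratic reciprocity against the fixed congruence classes $\bm a$ to turn the surviving Jacobi symbols into harmless constants.
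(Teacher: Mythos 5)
Your decomposition and the bulk of the argument track the paper's proof closely: split by the sizes of the $D_\mathbf{u}$'s, use Lemmas~\ref{lsm}, \ref{lLS}, \ref{lSW} on three of the four pieces, conclude from the pair-wise unlinked structure of the large set $\mathcal{L}$ that $|\mathcal{L}| = b^k$ exactly (and yes, you are right that the proofs of Lemmas~\ref{lUnlinked} and \ref{lUnlinked2} via the form $P(\mathbf{u}+\mathbf{v})=\phi(\mathbf{u},\mathbf{v})+\phi(\mathbf{v},\mathbf{u})$ bound pair-wise unlinked families, which is the version actually needed), observe via maximality that every non-large variable is linked to $\mathcal{L}$ and hence non-active and therefore restricted to the finite set $\{1\}\cup E\cup\{m:4m\in E\}$, and finally collapse the remaining $b^k$-fold sum over coprime ordered factorizations of a squarefree $m$ into $\sum_{m\leq X}\mu(m)^2F(m)$, with the $(b^k)^{\omega(m)}$ count exactly cancelling $b^{-k\omega(m)}$.

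However, there is a genuine error in the last step. You invoke Theorem~\ref{tKou} to bound $\sum_{m\leq X}\mu(m)^2F(m)$, but that theorem requires the Siegel--Walfisz-type hypothesis \eqref{eOnPrimes}, namely $\sum_{p\leq x}f(p)\log p=O_A\big(x/(\log x)^A\big)$ for all $A>0$. That hypothesis is designed for functions with strong cancellation on primes (e.g.\ character twists) and fails for an appropri\'ee $F$: condition~\eqref{eq:Calpha} forces $\sum_{p\leq x}F(p)/p\geq \alpha\log\log x+O(1)$, so $\sum_{p\leq x}F(p)\log p$ is of genuine order $x$ rather than $o(x/(\log x)^A)$. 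The right tool for a non-negative divisor-bounded multiplicative function is Shiu's bound \cite[Theorem~1]{Shiu}, which directly yields $\sum_{m\leq X}\mu(m)^2F(m)\ll\frac{X}{\log X}\exp\big(\sum_{p\leq X}F(p)/p\big)\asymp\frac{X}{\log X}\prod_{p\leq X}\big(1+F(p)/p\big)$; this is what the paper uses. Once Shiu replaces Theorem~\ref{tKou}, your argument closes.

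Two smaller points of exposition. Your claim that ``the remaining non-large variables \ldots contribute at most $(\log X)^{O(1)}$'' is slightly off: once you invoke the maximality of $\mathcal{L}$, every index outside $\mathcal{L}$ is linked to $\mathcal{L}$ and therefore non-active, so its $D_\mathbf{u}$ ranges over the finite set $\{1\}\cup E\cup\{m:4m\in E\}$ and contributes an $O_F(1)$ factor, not a $(\log X)^{O(1)}$ one; there are no leftover unconstrained small variables. Also, your ``decomposition'' $\mathcal{S}=\mathcal{S}_{\mathrm{sm}}+\mathcal{S}_{\mathrm{LS}}+\mathcal{S}_{\mathrm{SW}}+\mathcal{S}_{\mathrm{main}}$ is not literally disjoint since the terms in $\mathcal{S}$ are signed; it is cleaner, as the paper does, to fix for each index whether $D_\mathbf{u}$ is large/medium/active and classify each of the finitely many resulting disjoint subsums into one of the four cases.
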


\begin{proof}
We split $\mathcal{S}(X, k, \bm{a})$ in $\ll_k 1$ subsums depending on the sizes of the variables $D_\mathbf{u}$. Write $\mathcal{L}$ for the set of indices $\mathbf{u}$ for which $D_\mathbf{u}$ is large and write $\mathcal{M}$ for the set of indices for which $D_\mathbf{u}$ is medium, so $\mathcal{L} \subseteq \mathcal{M}$. If $|\mathcal{L}| \leq b^k - 1$, then the resulting subsums fall under the purview of $\mathcal{S}_{\text{sm}}(X, k, \bm{a})$, and thus we appeal to Lemma \ref{lsm} to bound their contribution to $\mathcal{S}(X, k, \bm{a})$. 

It remains to bound the cases where $|\mathcal{L}| \geq b^k$. If there exist linked indices $\mathbf{u} \in \mathcal{M}$ and $\mathbf{v} \in \mathcal{M}$, we may appeal to Lemma \ref{lLS} to show that the resulting contribution is in $\mathcal{S}_{\text{LS}}(X, k, \bm{a})$ and therefore negligible. 
In the remaining cases 
all elements $\mathbf{u}, \mathbf{v} \in \mathcal{L}$ are unlinked. Hence Lemma \ref{lUnlinked} and Lemma \ref{lUnlinked2} force that $\mathcal{L}$ is maximally unlinked, and thus $|\mathcal{L}| = b^k$.

In the remaining subsums, we must have $|\mathcal{M}| = b^k$. Indeed, $\mathcal{L}$ is maximally unlinked, so for every $\mathbf{u} \in \mathcal{M}$, there exists $\mathbf{v} \in \mathcal{L}$ such that $\mathbf{u}$ and $\mathbf{v}$ are linked. Therefore such subsums fall under the purview of $\mathcal{S}_{\text{LS}}(X, k, \bm{a})$, which we have already shown to be negligible. Now define $\mathcal{A}$ to be the set of $\mathbf{u} \in \mathcal{A}$ such that $D_\mathbf{u}$ is active. If $|\mathcal{A}| > |\mathcal{L}|$, then the resulting contribution to $\mathcal{S}(X, k, \bm{a})$ is negligible due to Lemma \ref{lSW}.

At this stage, the only remaining subsums satisfy $|\mathcal{L}| = |\mathcal{M}| = |\mathcal{A}| = b^k$. Therefore we see that $D_\mathbf{u} = 1$, $D_\mathbf{u} \in E$ or $4D_\mathbf{u} \in E$ for all $\mathbf{u} \not \in \mathcal{L}$. Since there are only finitely many exceptional moduli in the set $E$, we first fix the variables outside $\mathcal{L}$, then trivially bound each quadratic symbol by $1$. Let $t:=b^k$ denote the number of large variables. Then the resulting sum will be 
$$ 
\ll \sum_{b_1\cdots b_t \leq X} \frac{\mu(b_1\cdots b_k)^2 F(b_1\cdots b_k)}{t^{\omega(b_1\cdots b_k)}} = \sum_{b\leq X} \mu(b)^2F(b).
$$ 
Alluding to Shiu's bound \cite[Theorem 1]{Shiu} concludes the proof.
\end{proof}

We are now ready to prove the main result of this section.

\begin{proof}[Proof of Theorem \ref{tCharacter}]
The result is a direct consequence of Theorem \ref{tMoments}, since
$$
\sum_{1 \leq m \leq X} \mu(2m)^2 f_k^\ast(m) F(m) \leq \sum_{\bm{a}} \left|\mathcal{S}(X, k, \bm{a})\right|,
$$
and there are at most $\ll_k 1$ choices of $\bm{a}$.
\end{proof}

\section{Proof of main theorems}
\label{sFinal}
\subsection{Proof of Theorem \ref{t12}}
\label{ss:prf1}
We are now ready to prove Theorem \ref{t12}. The overarching logic is that Theorem \ref{t4rankSub} and Lemma \ref{lh3Level} will allow us to employ Theorem \ref{tNT}. More precisely, Theorem \ref{t4rankSub} gives that the moments of the $4$-rank have a R\'edei majorant and Lemma \ref{lh3Level} gives the required level of distribution result for $h_3(n)$. The sieving process of Theorem \ref{tNT} will produce a linear sum over all integers containing the twisted $4$-rank $g(m, n)$ from Subsection \ref{ss4rank} weighted by the density function $\delta(m)$ of $h_3(n)$ introduced in \eqref{eh3Density}. This final sum is handled by an appeal to Theorem \ref{tCharacter}.

\begin{proof}[Proof of Theorem \ref{t12}]
Let $k \geq 1$ and $n = 3 \cdot 2^k$. Since we have $h_n(d) \geq h_2(d) \geq 2^{\omega(d) - 2}$, the lower bound is trivial. For the upper bound, we will prove that
$$
\sum_{\substack{0 < d \leq X \\ \textup{ fundamental}}} h_{3 \cdot 2^k}(d) \ll_k X \log X.
$$
The negative discriminants can be dealt with in a similar fashion.

Since $h_{2^{t + 1}}(d)/h_{2^t}(d) \leq h_{2^t}(d)/h_{2^{t - 1}}(d)$ for $t \geq 1$, we deduce that 
\[
h_{2^k}(d) = h_2(d) \frac{h_4(d)}{h_2(d)} \frac{h_8(d)}{h_4(d)} \cdots \frac{h_{2^k}(d)}{h_{2^{k-1}}(d)} \leq h_2(d) \bigg(\frac{h_4(d)}{h_2(d)} \bigg)^{k-1} = h_2(d) 2^{(k-1) \cdot\mathrm{rk}_4 \mathrm{Cl}(\Q(\sqrt{d}))}.
\] Using $h_2(d)\leq 2^{\omega(d)}$ we see that 
$h_{3 \cdot 2^k}(d) \leq h_3(d) 2^{\omega(d)} 2^{k \cdot\mathrm{rk}_4 \mathrm{Cl}(\Q(\sqrt{d}))}$.
Therefore, \[\sum_{\substack{0 < d \leq X \\ \textup{ fundamental}}} h_{3 \cdot 2^k}(d) \leq \sum_{\substack{0 < d \leq X \\ \textup{ fundamental}}} 
2^{\omega(d)} 2^{k \cdot \mathrm{rk}_4 \mathrm{Cl}(\Q(\sqrt{d}))} + \sum_{\substack{0 < d \leq X \\ \textup{ fundamental}}} (h_3(d) - 1) 
 2^{\omega(d)} 2^{k \cdot \mathrm{rk}_4 \mathrm{Cl}(\Q(\sqrt{d}))}. \]
The special case $\kappa = 2$ of the work of Fouvry--Kl\"uners~\cite[Equation (53)]{FKWeighted} shows that the first sum 
in the right-hand side is $\ll_k X \log X$. Therefore, it suffices to show that
$$\sum_{\substack{0 < d \leq X \\ \textup{ fundamental}}} (h_3(d) - 1) 2^{\omega(d)} 2^{k \cdot \mathrm{rk}_4 \mathrm{Cl}(\Q(\sqrt{d}))} \ll_k X \log X.$$
At this point we apply Theorem \ref{tNT} with $f(d) = 2^{\omega(d)} 2^{k \cdot \mathrm{rk}_4 \mathrm{Cl}(\Q(\sqrt{d}))}$, $a_d = d$ and 
weights given by $w_d = \mathds{1}_{d \text{ fundamental}} \times (h_3(d) - 1)$. For each odd prime $p$ and $e \in \Z_{\geq 1}$, we take the partition $\mathcal{P}(p^e)$ to be $\{\mathcal{A}_1, \mathcal{A}_2, \mathcal{A}_3\}$, where $\mathcal{A}_1$ consists of the invertible squares inside $\Z/p^e\Z$, $\mathcal{A}_2$ consists of the invertible non-squares in $\Z/p^e\Z$ and $\mathcal{A}_3$ consists of all elements divisible by $p$, while for $p = 2$ we partition into the odd and even numbers. The function $g$ is the one from Definition \ref{def:firstdefn}. To see why $f$ is $(A,2^\omega\cdot g)$-R\'edei majorized we use Theorem \ref{t4rankSub} to get the inequality
$$
2^{k \cdot \mathrm{rk}_4 \mathrm{Cl}(\Q(\sqrt{m n}))} \leq g(m, n)^k \cdot 2^{k \omega(n) + k}.
$$
The majorization then follows from the inequality
$$
2^{\omega(mn)} \cdot 2^{k \cdot \mathrm{rk}_4 \mathrm{Cl}(\Q(\sqrt{m n}))} \leq 2^{\omega(m)} \cdot g(m, n)^k \cdot 2^{(k + 1) \omega(n) + k}.
$$ 
The sequence $h_3(d) - 1$ has a positive level of distribution thanks to Lemma \ref{lh3Level} with the choice $M(X)=X/\pi^2$. Recall the density function $\delta(m)$ defined in \eqref{eh3Density}; the function $h(d,\c A)$ is defined by the level of distribution result in Lemma \ref{lh3Level}. One readily checks that $\delta(m)$ satisfies the hypotheses of Theorem \ref{tNT}, where the hypothesis \eqref{eHSubm} follows by adapting the proof of Theorem \ref{t4rankSub}. This motivates us to introduce the quantity
$$
f^\ast(m) = \frac{2^{\omega(m)}}{2^r} \sum_{\bm{\epsilon}} g(m, \bm{\epsilon})^k,
$$
where $m$ has exactly $r$ odd prime divisors. Then Theorem \ref{tNT} yields
\begin{align*}
\sum_{\substack{0 < d \leq X \\ \textup{ fundamental}}} (h_3(d) - 1) \cdot 2^{\omega(d)} \cdot 2^{k \cdot \mathrm{rk}_4 \mathrm{Cl}(\Q(\sqrt{d}))} 
&\ll_k X \prod_{p \leq X} (1 - \delta(p)) \sum_{a \leq X} f^\ast(a) \delta(a) \\
&\ll \frac{X}{\log X} \sum_{a \leq 8X} \frac{f^\ast(a) \mu(2a)^2}{a},
\end{align*}
since $\delta(a) \ll \frac{1}{a}$. The last inequality uses that $f^\ast(a)$ depends only on the largest odd square-free divisor of $a$ and that $\delta(a)$ vanishes if $16 \mid a$ or $p^2 \mid a$ for $p \geq 3$. After applying partial summation, it suffices to show that
\[
\sum_{a \leq t} f^\ast(a) \mu(2a)^2 \ll_k t \log t.
\]
Taking $F$ to be the multiplicative function $2^{\omega(a)}$, this follows from Theorem \ref{tCharacter}.
\end{proof}

\subsection{Proof of Theorem \ref{tFibration}}
\label{ss:prf2}
The overall logic will be similar to the proof of Theorem \ref{t12}. In this case Theorem \ref{tSelmerSub} and Lemma \ref{lPolyLevel} will play the role of Theorem \ref{t4rankSub} and Lemma \ref{lh3Level}. We then apply Theorem \ref{tNT}. The resulting linear sum is however not necessarily over square-free values. For this reason we first apply Lemma \ref{tSqfReduction} before we are able to use Theorem \ref{tCharacter}. 

\begin{proof}[Proof of Theorem \ref{tFibration}]
We start by remarking that the lower bound is trivial, so it suffices to establish the upper bound, for which we first make some reductions. Recall that our elliptic fibration $f: \mathcal{E} \rightarrow \mathbb{A}^n$ is given by $P(t_1, \dots, t_n) y^2 = (x - r_1) (x - r_2) (x - r_3)$. By removing square factors from the polynomial $P$, we may reduce to the case that $P$ is separable. Furthermore, if $P$ is a non-zero constant, then the upper bound is trivial. Henceforth we will assume that $P$ has degree at least $1$. Furthermore, we may reduce to the case where $\gcd(r_1, r_2, r_3) = 1$ by quadratic twisting our elliptic curve if necessary.

Hence it is enough to establish that for all separable non-constant polynomials $P \in \Z[t_1, \dots, t_n]$ and all $\kappa > 1$ there exists $C > 0$ such that for all $B \geq 3$ one has 
$$
\sum_{\substack{\b t\in \Z^n, P(\b t)\neq 0 \\ \max_i |t_i| \leq B}} \kappa^{\mathrm{rk}(E(\mathbf{t}))} \leq CB^n.
$$
We let $k \geq 1$ be the smallest integer such that $\kappa \leq 2^k$, and recall that $\Omega := 2(r_1 - r_2)(r_1 - r_3)(r_2 - r_3)$. By Theorem \ref{tSelmerSub} there exists a finite collection $\mathcal{C}$ such that
$$ 
\sum_{\substack{\b t\in \Z^n, P(\b t)\neq 0 \\ \max_i |t_i| \leq B}} \kappa^{\mathrm{rk}(E(\mathbf{t}))} \leq \sum_{\substack{\b t\in \Z^n, P(\b t)\neq 0 \\ \max_i |t_i| \leq B}} |\mathrm{Sel}^2(E(\mathbf{t}))|^k \leq 4^{k \cdot |\Omega| + k} \max_{\mathbf{r} \in \mathcal{C}}\sum_{\substack{\b t\in \Z^n, P(\b t) \neq 0 \\ \max_i |t_i| \leq B}} f_\mathbf{r}(P(t_1, \dots, t_n))^k,
$$
where $f_\mathbf{r}$ is introduced in \S\ref{ssSelmer}. We fix some $\mathbf{r} \in \mathcal{C}$ and we aim to upper bound each individual sum
$$
\sum_{\substack{\b t\in \Z^n, P(\b t)\neq 0 \\ \max_i |t_i| \leq B}} f_\mathbf{r}(P(t_1, \dots, t_n))^k.
$$
We estimate this sum with Theorem \ref{tNT} by first parametrising the elements $\b t \in \Z^n$ through the integers $n \in \Z_{\geq 1}$. Because $P$ is separable and has degree at least $1$, we may apply Lemma \ref{lPolyLevel}, and we write $h(m)$ and $h(m, \bm{\epsilon})$ for the resulting density functions. Note that the condition $P(\mathbf{t}) \neq 0$ may be ignored as this set can be shown to be of size $O(B^{n - 1})$. Because of the second part of Lemma \ref{lPolyLevel}, these density functions satisfy the required conditions to apply Theorem \ref{tNT}, where \eqref{eHSubm} follows by adapting the proof of Theorem \ref{tSelmerSub}. Furthermore, we have that
$$
f_\mathbf{r}(mn)^k \leq g_\mathbf{r}(m, n)^k 4^{k \cdot \omega(n)}
$$
by Theorem \ref{tSelmerSub}, so $f_\mathbf{r}$ is R\'edei majorized. Thus, Theorem \ref{tNT} provides us with the upper bound
\begin{align}
\label{eNToutput}
\frac{1}{B^n} 
\sum_{\substack{\mathbf{t} \in \Z^n, P(\mathbf{t}) \neq 0 \\ \max_i |t_i| \leq B }} f_\mathbf{r}(P(\b t))^k 
\ll_{k, P} \prod_{p \leq B^n} (1 - h(p)) \sum_{1 \leq a \leq B^n} f^\ast(a),
\end{align}
where
$$
f^\ast(a) = \sum_{\bm{\epsilon} = (\epsilon_i)_{1 \leq i \leq \omega_1(a)}} g(a, \bm{\epsilon}) h(a, \bm{\epsilon}) = \frac{1}{2^{\omega_1(a)}} \sum_{\bm{\epsilon} = (\epsilon_i)_{1 \leq i \leq \omega_1(a)}} g(a, \bm{\epsilon}) 2^{\omega_1(a)} h(a, \bm{\epsilon})
$$
with $\omega_1(a)$ the number of prime divisors of $a$ coprime to $\Omega$. By Lemma \ref{lPolyLevel} there exists a constant $C_6 > 0$ and a multiplicative function $\tilde{h}$ such that
\begin{align}
\label{ehconstruct}
2^{\omega_1(a)} h(a, \bm{\epsilon}) \leq h(a) \tilde{h}(a), \quad \quad 1 - \frac{C_6}{p^2} \leq \tilde{h}(p) \leq 1 + \frac{C_6}{p^2}, \quad \quad \tilde{h}(p^e) \leq 2 \text{ for all } e \geq 1.
\end{align}
We will now bound
$$
\sum_{1 \leq a \leq B^n} f^\ast(a) \leq \sum_{1 \leq a \leq B^n} \frac{h(a) \tilde{h}(a)}{2^{\omega_1(a)}} \sum_{\bm{\epsilon} = (\epsilon_i)_{1 \leq i \leq \omega_1(a)}} g(a, \bm{\epsilon}) = \sum_{1 \leq a \leq B^n} h(a) \tilde{h}(a) \tilde{f}^\ast(a),
$$
where $\tilde{f}^\ast(a) = 2^{-\omega_1(a)} \sum_{\bm{\epsilon}} g(a, \bm{\epsilon})$. One directly checks that $\tilde{f}^\ast(a)$ satisfies the conditions of Lemma \ref{tSqfReduction}, while for $h(a) \tilde{h}(a)$ this is a consequence of Lemma \ref{lPolyLevel} and \eqref{ehconstruct}. It suffices to show that
\begin{align}
\label{eSquarefreeReduction}
\sum_{1 \leq a \leq B^n} \mu(a)^2 h(a) \tilde{h}(a) \tilde{f}^\ast(a) \ll \prod_{p \leq B^n} \left(1 + h(p) \tilde{h}(p)\right).
\end{align}
Indeed, if so, we apply Lemma \ref{tSqfReduction} to deduce that
$$
\sum_{1 \leq a \leq B^n} f^\ast(a) \leq \sum_{1 \leq a \leq B^n} h(a) \tilde{h}(a) \tilde{f}^\ast(a) \ll \prod_{p \leq B^n} \left(1 + h(p) \tilde{h}(p)\right).
$$
The theorem is proved by injecting the above bound into \eqref{eNToutput} and using the simple estimate $\prod_{p \leq B^n} (1 - h(p)) (1 + h(p) \tilde{h}(p)) \ll 1$.

In order to establish the claim \eqref{eSquarefreeReduction}, we define the new multiplicative function $\bar{h}(a) = a \cdot h(a) \cdot \tilde{h}(a)$. By partial summation it is enough to demonstrate the inequality
$$
\sum_{1 \leq a \leq t} \mu(a)^2 \bar{h}(a) \tilde{f}^\ast(a) \ll_{k, P} \frac{t}{\log t} \prod_{p \leq t} \left(1 + \frac{\bar{h}(p)}{p}\right).
$$
To finish the proof, it remains to verify that $\bar{h}$ satisfies the conditions $(i)$, $(ii)$ and $(iii)$ in Theorem \ref{tCharacter}. The Lang--Weil bounds show that
\[
h(p) = \frac{c_P(p)}{p} + O\left(p^{-3/2}\right),
\]
where $c_P(p)$ is the number of distinct irreducible factors of $P$ defined over $\mathbb{F}_p$. The map $p \mapsto c_P(p)$ is Frobenian, i.e. is determined by the splitting of $p$ in a fixed number field. Furthermore, the average of $c_P(p)$ over the primes is equal to the number of distinct irreducible factors of $P$ over $\mathbb{Q}$. Therefore the conditions $(i)$, $(ii)$, $(iii)$ readily follow from \cite[Lemma 2.5]{LM}.
\end{proof}

\end{document}